\title{Deep BSDE Solver on Bounded Domains \\ Part I: General Loss Rate }
\author{
	Maximilian W\"{u}rschmidt\thanks{Trier University, Department IV -- Mathematics. M.\ W\"{u}rschmidt gratefully acknowledges financial support from the German Research Foundation (DFG) within the Research Training Group 2126: Algorithmic Optimization.} 
}
\date{}
\begin{document}
\allowdisplaybreaks
\maketitle
\begin{abstract}
    We consider a ramification of the deep BSDE loss functional designed to apply for BSDEs on bounded domains, i.e. with random (unbounded) time horizons.
    We derive a general convergence rate of the loss functional; precisely for a class of (randomly) weighted modifications of the functional. The rate is expressed in terms of the underlying discrete-time stepsize and a universal approximation distance.

    \bigskip
    \noindent\textbf{Mathematics Subject Classification (2020)}:
    	58J65 
    	65N75 
    	65N15 
		
    \bigskip
    \noindent\textbf{Keywords}: BSDE on Bounded Domain, Deep BSDE, Elliptic PDE
\end{abstract}
\section{Introduction}

Neural network-based algorithms are increasingly relevant for the numerical solution of backward stochastic differential equations (BSDE) as well as for elliptic and parabolic partial differential equations (PDE), especially in high-dimensional problems. We refer to \cite{Beck2023, gonon2024overview, han2025brief, Richter2023} for an overview on neural PDE solvers, to \cite{chessari2023numerical} for a recent survey of numerical methods for BSDEs and to the monographs \cite{jentzen2023_BOOK, petersen2024_BOOK} for comprehensive introductions to deep learning theory.

We are concerned with a variant of a so-called \textit{deep BSDE solver} – a class relying on the Feynman-Kac link between BSDEs and PDEs – where the training consist of minimizing a loss functional that enforces consistency with a discrete-time forward scheme of the BSDE. The approach stems from the seminal works \cite{E2017, Han2018} with a loss functional purely penalizing deviations from the terminal condition. Shortly thereafter various ramifications of loss functionals were proposed; see e.g. 
\cite{andersson2025deep, beck2021deep,  chassagneux2023learning, pham2021neural, han2024deep,  JihunHan2020derivative, kremsner2020deep,  Pham2020deepBackward, kapllani2024deep, kremsner2020deep,  Raissi2018, zhang2022fbsde}. A fundamental question is whether those solvers are capable to circumvent the curse of dimensionality by achieving exponential expressivity utilizing networks with polynomial growth in the number of neurons, see e.g. \cite{beck2024overcoming, cioica2022deep, elbrachter2022dnn, grohs2023proofCoD, neufeld2025rectified}. Specifically for BSDEs with a bounded time horizon there are several theoretical advancements concerning convergence guarantees and rates; see e.g. \cite{frey2025convergence, gao2023convergence, gnoatto2025convergence, gonon2022uniform, grohs2023space, han2020convergence, Knochenhauer2021ConvergenceRates, negyesi2024generalized}.

%
\nopagebreak
To the best of our knowledge extensions of the deep BSDE method to the case of random unbounded time horizons – such as first exit times from bounded domains – is not yet thoroughly investigated. We briefly review existing literature. Frameworks for \textit{random unbounded exit times}: In \cite{furuya2024simultaneously} neural operators for BSDEs driven by a Brownian motion with constant multiplicative noise are investigated. The training uses a decomposition of the associated Green function exploiting a suitable wavelet expansion.
In \cite{matoussi2024_ErgodicDeep} possible extensions of the deep BSDE solver to ergodic BSDEs are presented in multiple numerical experiments. Another approach is based on the \textit{walk-on-spheres} method, see e.g. \cite{grohs2022deep, nam2024solving} where the methodology is applied to solve Poisson problems, particularly not involving the evaluation of any network derivative. 
%
Regarding the problem in terms of the \textit{elliptic PDE}, a similarly derivative-free approach for general linear elliptic PDEs with Dirichlet boundary condition is presented in \cite{JihunHan2024stochastic, JihunHan2020derivative} allowing a general collocation point distribution with support covering the entire domain. From the variational formulation of elliptic PDEs the \textit{deep Ritz} method was developed, see \cite{weinan2018deep} and for the linear case with Robin boundary there is error analysis, see e.g. \cite{jiao2024error, jiao2025drm}. Furthermore the expressivity of operator networks designed to apply for linear elliptic PDEs is investigated in \cite{marcati2023exponential}.

%
In this paper we analyse the asymptotic behaviour of a ramification of the deep BSDE solver. We consider a loss functional that learns the solution of the BSDE along an unbounded discrete-time grid, depending on the first exit of a simulation of the underlying forward diffusion. It is a natural extension of the loss functional proposed in \cite{Knochenhauer2021ConvergenceRates} which is based on the one introduced in \cite{Raissi2018}.  The key distinction from the classical deep BSDE approach lies in learning a single (neural) functional approximation of the solution of the BSDE for the entire discrete-time gird.

Our loss functional for random (unbounded) time horizons involves a random number of summands – in contrast to the constantly bounded time horizon with a finite sum of expectations. We analyse a \textit{randomly augmented} version of said loss functional; additionally a scaling factor given by a positive and bounded below random variable is introduced. For this augmented loss we establish a rate in terms of the underlying grid stepsize and a universal approximation distance (see Theorem~\ref{thm:general_LossRate}). The inclusion of the random scaling factor may facilitate future error analysis. As special case we present the rate for the plain loss (see Theorem~\ref{thm:special_LossRate}).

%
\textbf{Outline.} In Section~\ref{sec:setting} we introduce our loss functional and provide an informal description of our main result in terms of the plain loss functional, see Theorem~\ref{thm:special_LossRate}. In Section~\ref{sec:AssumptionsPreliminaryResultsEtc} we state the necessary regularity assumptions and collect useful results from literature. Our main results Theorem~\ref{thm:general_dynamical_LossRate} and ~\ref{thm:general_LossRate} are presented in Section~\ref{sec:GeneralLossRate}, with auxiliary results for the proof placed in Section~\ref{sec:AuxResults} and further useful bounds are provided in the supplementary material, see Appendix~\ref{sec:CombinedSupplements}.
\section{Random Time Horizon Loss Functional}\label{sec:setting}

Let $\domain\subset\R^d$ be a bounded domain and let $(\Omega, \F, \mathfrak{F}, \prob)$ be a filtered probability space, where the filtration $\mathfrak{F}$ is generated by a $d$-dimensional Brownian motion $W$ augmented by all $\prob$-nullsets. 
Under standard regularity assumptions (see Section~\ref{sec:AssumptionsPreliminaryResultsEtc}) we consider decoupled forward-backward SDEs on $\domain$. For $x\in\domain$ let $X^x$ denote the forward diffusion solving
\begin{equation}\label{eqnForwardDynamics}
    X_t^{x} = x + \int_0^t \mu(X_s^{x})\de s + \int_0^t \sigma(X_s^{x})\de W_s \qquad t\geq 0
\end{equation}
\pagebreak[0]
\nopagebreak[4]
where $ \tau^{x} \defined \inf \big\{ t\geq 0 \,\big|\, X_t^{x} \notin \domain \big\} $ denotes the first exit of $X^{x}$ from $\domain$. To keep the notation simple, we omit the starting point when there is no ambiguity and simply write $X \defined X^{x}$, $\tau\defined \tau^{x}$, etc. Let $(Y,Z)$ denote the unique solution of the BSDE
\begin{equation}\label{eqnRandomHorizonBSDE}
    Y_t =\; g(X_\tau) + \int_{t\wedge\tau}^{\tau} f(X_s,Y_s,Z_s) \de s - \int_{t\wedge\tau}^{\tau} Z_s \de W_s \qquad t \in [0,\tau].
\end{equation}
We use the well-known connection between a solution $(X,Y,Z)$ of the FBSDE \eqref{eqnForwardDynamics}-\eqref{eqnRandomHorizonBSDE} and the solution $u$ of the nonlinear second-order elliptic Dirichlet problem on $\domain$.\footnote{
    This is the well-known classical Feynman-Kac formula, see e.g.\ \cite[Section 5.7.2]{Pardoux2014}.
} 
Specifically let $\InfGen$ denote the generator of $X$, i.e. consider the second-order linear differential operator;
\begin{equation}\label{eqnInfGen}
	\InfGen[u](x) = \mu(x)^\top\! \Diff\! u(x) + \tfrac{1}{2}\trace\bigl[\sigma(x)\sigma(x)^\top \Diff^2\! u(x)\bigr]
\end{equation}
and let $u\colon\overline{\domain}\to\R$ denote the classical solution of the semilinear PDE;
\begin{equation}\label{eqnPDE}
	\InfGen[u] + f (\cdot, u, \sigma^\top \Diff\! u) = 0 \quad\text{on }\domain 
	\qquad\text{and}\qquad 
	u=g\quad\text{on } \partial\domain
\end{equation}
Then $u$ and the solution $(Y,Z)$ of \eqref{eqnRandomHorizonBSDE} are related via the classical Feynman-Kac correspondence
\begin{equation}\label{eqnIdentification}
	Y_t = u(X_t)
	\qquad\text{and}\qquad 
	Z_t = \sigma(X_t)^\top\Diff u(X_t)
	\qquad \text{on } \big\{t\in[0,\tau]\big\} 
\end{equation}
For a given stepsize $h\in(0,1)$ let $ \unboundedTimeGrid \defined \{ \ell h \ |\ \ell\in\nat_0 \} $ denote equidistant discrete-time grid.
Fix an admissible class $\Fct$ of functional approximations.\footnote{
    We assume that $\Fct$ is a bounded subset of the Hölder space $\HoelderSpace{3}{\gamma} (\R^d)$, see Definition~\ref{Definition Hoelder Space}. E.g. fully connected single hidden-layer neural networks with activation function of class $\HoelderSpace{3}{\gamma}(\R^d)$ are a possible choice for $\Fct$.
} 
We consider the loss functional;
\begin{equation}\label{defLossFunctional}
	\mathfrak{L} \colon\Fct\rightarrow\R;\quad \nn \mapsto \E \Big[ \lossBdry^{\StoppD} (\nn) + \loss^{\StoppD} (\nn) \Big]
\end{equation}
where $\lossBdry^{\StoppD} (\nn) \defined |\nn(\X_{\StoppD}) - g(\X_{\StoppD})|^2$ is a penalization of deviations from the terminal condition of \eqref{eqnRandomHorizonBSDE} and $\loss^{\StoppD} (U)$ is a dynamical loss random variable specified in \eqref{defLossRandomVariable} below.\footnote{
	Informally this is a penalization of a \textit{forward discretization} of the BSDE \eqref{eqnRandomHorizonBSDE} along trajectories of \eqref{eqnForwardDynamics}; carried out by the classical Euler-Maruyama scheme $\X$ and a discrete-time approximation $\StoppD$ of the first exit $\tau$.
}
Our loss formulation differs from the one considered in deep BSDE solvers with constant time horizon in that we consider a \emph{random} sum of penalization terms determined by the number of Euler-Maruyama jumps until the first exit $\StoppD$. To be precise, the loss random variable is
\begin{equation}\label{defLossRandomVariable}
	\loss^{\StoppD} (\nn) \defined 
    \sum_{ t\in\unboundedTimeGrid\cap [0,\StoppD) } \big|\lossSummand{t}{\unboundedTimeGrid} (\nn) \big|^2
\end{equation}
where for each $t\in\unboundedTimeGrid$ the dynamical penalization $\lossSummand{t}{\unboundedTimeGrid} (\nn)$ is defined as follows
\begin{equation}\label{defLossSummand}
	\begin{aligned}
		\lossSummand{t}{\unboundedTimeGrid}(\nn) &\defined \nn(\X_{t+h}) - \nn(\X_{t}) \\
		&\hspace{1.0cm} + f\big(\X_{t}, \nn(\X_{t}),[\sigma^\top\Diff\!\nn](\X_{t})\big) h \\
		&\hspace{2.0cm} - [\Diff\!\nn^\top\sigma](\X_{t})(W_{t+h}-W_{t}) \\
		&\hspace{3.0cm} - [\mu^\top\Diff^2\!\nn\sigma](\X_{t}) h (W_{t+h}-W_{t}) \\	
		&\hspace{4.0cm} - \tfrac{1}{2} (W_{t+h}-W_{t})^\top [\sigma^\top\Diff^2\!\nn \sigma] (\X_{t}) (W_{t+h}-W_{t}) \\
		&\hspace{5.0cm} + \tfrac{1}{2} \trace[\sigma^\top\Diff^2\!\nn \sigma](\X_{t}) h
	\end{aligned}
\end{equation}
where $\X$ denotes the well-known classical Euler-Maruyama scheme; set $\X_0 \defined x$ and define
\begin{equation}\label{defEulerMaruyama}
	\X_{t+h} \defined \X_{t} + \mu (\X_{t}) h + \sigma (\X_{t}) ( W_{t+h}-W_{t} ) \qquad t\in\unboundedTimeGrid
\end{equation}
Correspondingly we consider
$ \StoppD \defined \inf \{t\in\unboundedTimeGrid \ |\ \X_t \notin \domain \} $
as discrete-time approximation of $\tau$.

The goal of this paper is to establish a convergence rate of the expected loss functional $\mathfrak{L}$ defined in \eqref{defLossFunctional}. We analyze a generalized version of $\mathfrak{L}$, where additionally a positive random variable $\Psi$ is introduced. This can be regarded as random augmentation acting as multiplicative weight on the loss random variable $\loss^{\StoppD} (\nn)$ (see \eqref{defLossFunctional}). We consider 
\begin{equation}\label{defLossFunctional_PSI}
	\mathfrak{L}^\Psi \colon\Fct\rightarrow\R;\quad \nn \mapsto \E \Big[ \Psi \big( \lossBdry^{\StoppD} (\nn) +   \loss^{\StoppD} (\nn) \big) \Big]
\end{equation}
We establish a convergence rate of $\mathfrak{L}^{\Psi}$ consisting of two summands: a rate in terms of the stepsize $h$ and a distance that can be minimized using the universal approximation capabilities of neural networks. 
For mappings $\psi\in\C^{2} (\overline{\domain})$ we consider the norm\footnote{
    Notice this norm corresponds to the \textit{uniform on compacta universal approximation} result from \cite{Hornik1991}
}
\begin{equation}\label{defUniversalApproxNorm}
    {\| \psi \|}_\domain \defined \sup_{x\in \overline{\domain}}\Bigl[\bigl| \psi(x) \bigr| + \bigl\|\Diff\!\psi(x) \bigr\| + \bigl\|\Diff^2\!\psi(x) \bigr\|\Bigr].
\end{equation}
For illustrative purpose we state a special case of our main result Theorem~\ref{thm:general_LossRate}: The rate of the plain unscaled loss functional $\mathfrak{L}$ (cf. \eqref{defLossFunctional});
\begin{theorem}\label{thm:special_LossRate}
    Assume the setting of Theorem~\ref{thm:general_LossRate}. There are $C_1, C_2 >0$ (not depending on $h$);
    \begin{equation}
        \E \big[ \lossBdry^{\StoppD} (\nn) \big] \leq C_1 h^\frac{1}{4}
        \qquad\text{and}\qquad
        \E \big[ \loss^{\StoppD} (\nn) \big] \leq C_2 h^{1+\frac{1}{4}} + C_2 {\|U-u\|}_\domain^2 . \closeEqn
    \end{equation}
\end{theorem}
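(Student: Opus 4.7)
The plan is to obtain Theorem~\ref{thm:special_LossRate} as a direct corollary of Theorem~\ref{thm:general_LossRate}. Concretely, I would apply the general result with the deterministic weight $\Psi\equiv 1$. This choice is trivially positive, measurable and bounded from above and from below by $1$, so it lies in the admissible class of random scalings for which Theorem~\ref{thm:general_LossRate} is stated. Under this specialization the augmented functional $\mathfrak{L}^{\Psi}$ reduces to the plain loss $\mathfrak{L}$ of \eqref{defLossFunctional}, and the expectation on the right of \eqref{defLossFunctional_PSI} splits into the two non-negative contributions $\E[\lossBdry^{\StoppD}(U)]$ and $\E[\loss^{\StoppD}(U)]$.

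From there the plan is to read off the two displayed inequalities directly from the conclusion of Theorem~\ref{thm:general_LossRate}, which — judging from the informal description in the introduction — is expected to provide separate rates for the boundary and dynamical summands. The boundary rate $h^{1/4}$ reflects a Gobet-type weak-exit-time estimate comparing $\X_{\StoppD}$ with $X_{\tau}$; in the special case $\Psi\equiv 1$ the constant $C_1$ absorbs the $\|U-u\|_\domain$-dependence so that only the stepsize rate is made explicit. The dynamical rate $h^{1+1/4}$ combines the per-step It\^o--Taylor remainder (of order $h^{3/2}$, hence $h^{3}$ after squaring), the random summation over the $\mathcal{O}(1/h)$ steps up to $\StoppD$, and the $h^{1/4}$ exit-time correction; the additional contribution $\|U-u\|_\domain^{2}$ arises by writing $\lossSummand{t}{\unboundedTimeGrid}(U)$ as $\lossSummand{t}{\unboundedTimeGrid}(u)$ plus terms involving $U-u$, $\Diff(U-u)$ and $\Diff^{2}(U-u)$, all of which are controlled by the universal-approximation norm defined in \eqref{defUniversalApproxNorm}.

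Apart from checking the admissibility of $\Psi\equiv 1$ and tracking constants, no further argument is needed here: the delicate work — controlling the random number of Euler--Maruyama steps, bounding the exit-time approximation in the relevant moments, and performing the passage from $u$ to $U$ via the $\|\cdot\|_\domain$-norm — is carried out once and for all inside the proof of Theorem~\ref{thm:general_LossRate}. The only point where some care is needed is to ensure that the general bound is genuinely summand-wise, so that the boundary rate $h^{1/4}$ can be extracted without being contaminated by the better dynamical rate $h^{1+1/4}$ or the approximation distance $\|U-u\|_\domain^{2}$; this is the main obstacle in the reduction, and it is resolved at the level of Theorem~\ref{thm:general_LossRate} by separately estimating the $\E[\Psi\lossBdry^{\StoppD}(U)]$ and $\E[\Psi\loss^{\StoppD}(U)]$ pieces before choosing $\Psi\equiv 1$.
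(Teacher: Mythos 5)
Your overall route is the paper's route: specialize the scaling variable to $\Psi \equiv 1$ in Theorem~\ref{thm:general_LossRate}, under which $\mathfrak{L}^\Psi$ reduces to $\mathfrak{L}$ and the boundary and dynamical terms are already estimated separately, so the dynamical bound $\E[\loss^{\StoppD}(\nn)] \leq C_2 h^{1+\frac{1}{4}} + C_2 \|U-u\|_\domain^2$ is indeed a pure plug-in, exactly as in the paper.

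There is, however, a genuine gap in the boundary part. Theorem~\ref{thm:general_LossRate} only gives $\E[\Psi\,\lossBdry^{\StoppD}(\nn)] \leq C_1 h^{\frac{1}{8}}$, not $h^{\frac{1}{4}}$: in Step~1 of its proof the factor $\Psi$ is separated from $\lossBdry^{\StoppD}(\nn)$ by Cauchy--Schwarz, which forces one to control $\E[\lossBdry^{\StoppD}(\nn)^2]^{\frac{1}{2}}$, i.e. a fourth moment of $X_\tau - \X_{\StoppD}$ via Corollary~\ref{corollary:BSDE_Discretization_EM_Stopp}, and this halves the exponent coming from $\E[|\tau-\StoppD|^p] \leq c_p h^{\frac{1}{2}}$. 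So "reading off" the conclusion of the general theorem with $\Psi\equiv 1$ only yields $C_1 h^{\frac{1}{8}}$ for the boundary penalization and does not establish the claimed rate $h^{\frac{1}{4}}$. To get the statement as formulated you must re-enter Step~1 of that proof and observe that for $\Psi\equiv 1$ the Cauchy--Schwarz separation is unnecessary, so that $\E[\lossBdry^{\StoppD}(\nn)]$ is bounded directly by a constant times $\E[\|X_\tau - \X_{\StoppD}\|^2]$ and the exit-time estimates enter without the exponent being halved; this is precisely the remark the paper attaches to its proof. (A small side point: your claim that "$C_1$ absorbs the $\|U-u\|_\domain$-dependence" is moot, since the boundary bound in the general theorem contains no approximation-distance term at all.) With this one additional observation your argument matches the paper's; without it, the boundary inequality as stated is not proved.
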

\begin{proof}
    This is a direct consequence of Theorem~\ref{thm:general_LossRate} for the choice $\Psi = 1$. Notice the rate of the boundary penalization improves since the Cauchy-Schwarz inequality is not required in the first step of the proof.
\end{proof}
\section{Assumptions and Results from Literature}\label{sec:AssumptionsPreliminaryResultsEtc}

Let $\theta\in\R$. Following the seminal works \cite{PardouxRandomTimeBSDE, Peng1991} on BSDEs with random time horizon we are concerned with solutions of \eqref{eqnRandomHorizonBSDE} satisfying $(Y,Z)\in\M_{\theta}^2(\tau; \R)\times\M_{\theta}^2(\tau; \R^d)$.\footnote{
    We write 
    $\M_{\theta}^2(\Stopp; \R^n) \defined \{ \Process\ \text{progressive,}\ \R^n\text{-valued}\ |\ \E [ \int_0^{\Stopp} \exp (\theta t)\ |\Process_t|^2 \de t ] < \infty \} $
    for a stopping time $\Stopp$.
}
For further details we refer to \cite[Section 5.7.2]{Pardoux2014}, \cite[Section 4.6]{zhang2017backward} as well as \cite{briand2003lp, BriandHuHomogenization, lin2020second, papapantoleon2018existence, royer2004bsdes} and the references contained therein. 

For $k\in\nat_0$ and $\gamma\in(0, 1)$ we say that a function is of class $\HoelderSpace{k}{\gamma}$ if it is $k$-times continuously differentiable with $\gamma$-Hölder continuous derivatives, and we denote the space of functions of class $\HoelderSpace{k}{\gamma}$ on $\domain$ by $\HoelderSpace{k}{\gamma} (\domain)$.
For further details we refer to \cite[p.52]{GilbargTrudinger}. 
\begin{domain_assumption}\label{assumption:domain}
    %
    %
    The domain $\domain$ is bounded and its boundary $\partial\domain$ is of class $\HoelderSpace{3}{\gamma}$ for some $\gamma\in(0,1)$, i.e.\ $\partial\domain$ admits a representation via maps of class $\HoelderSpace{3}{\gamma}$. \close
\end{domain_assumption}
\begin{fbsde_assumption}\label{assumption:fbsde} 
    %
    %
    $\mu\colon\R^d\rightarrow\R^d$ is of class $\HoelderSpace{1}{\gamma} (\overline{\domain})$, $\sigma\colon\R^d\rightarrow\R^{d\times d}$ is of class $\C^2$
    and there is $R>0$ such that $\sigma\sigma^\top$ is uniformly elliptic on $\domain+B_{R}$ where $B_R$ denotes the open ball centered at zero with radius $R$.\footnote{
    	I.e., $\sigma\sigma^\top (x)$ is positive definite for all $x\in\domain+B_{R}$ with uniformly bounded eigenvalues (particularly uniformly bounded away from zero); see \cite[p.31]{GilbargTrudinger}.
    }
    Moreover $f\colon\R^d\times\R\times\R^d\rightarrow\R$ is of class $\C^{1}$, there is $C \geq 0$ such that
    \begin{align*}
        \sign (u) f(x,u,p) \leq C (\|p\| + 1), \quad (x,u,p)\in\domain\times\R\times\R^d 
    \end{align*}
    and the structure conditions are satisfied, i.e.\ for every $M>0$ there exists $C(M) \geq 0$ such that
    \begin{align*}
        \text{I)}\qquad & \lim_{\|p\|\to\infty}\; \sup_{x\in\domain, |u|\leq M}\; \tfrac{1}{\|p\|^2} \Big( \big| f(x,u,p) \big| + \big|\big\langle p, \Diff_p f(x,u,p) \big\rangle\big| \Big) \leq C(M) \\
        \text{II)}\qquad & \lim_{\|p\|\to\infty} \; \sup_{x\in\domain, |u|\leq M}\; \tfrac{1}{\|p\|^2} \Big| \Diff_u f(x,u,p) + \tfrac{1}{\|p\|^2} \big\langle p, \Diff_x f(x,u,p) \big\rangle \Big| = 0.
    \end{align*}
    Finally $g\colon\R^d\rightarrow\R^d$ is of class $\HoelderSpace{3}{\gamma}(\overline{\domain})$ for some $\gamma\in (0,1)$. \close
\end{fbsde_assumption}
Under these assumptions for every $x\in\domain$ the SDE \eqref{eqnForwardDynamics} has a unique solution that is contained in $\mathcal{S}^p(\tau)$ for any $p>1$.\footnote{
    Existence and uniqueness holds under weaker assumptions, see e.g. 
    \cite[Theorem 1.2]{Krylov1999}. Particularly the solution is bounded on $[0,\tau]$ and thus indeed contained in $\mathcal{S}^p (\tau) \defined \{ \Process \ |\ \E[ \sup_{t\in [0,\tau]} \| \Process_t \|^p ] < \infty \}$ for every $p>1$.
} 
Additionally we assume that $\tau$ has a sufficiently large positive exponential moment. This enables us to obtain uniqueness of a solution of the BSDE \eqref{eqnRandomHorizonBSDE} resp. of the PDE \eqref{eqnPDE} (see Theorem~\ref{thm:ExistenceAndUniqueness}) from the results in \cite{PardouxRandomTimeBSDE}.\footnote{
    Assumption \hyperref[assumption:exp_bsde]{(e-$1$)} may be regarded as analogous to \cite[(25) resp. (72)]{PardouxRandomTimeBSDE} where we assume that $f$ is Lipschitz in $y$.
}
\begin{exp_bsde_assumption}\label{assumption:exp_bsde}
%
%
    Assume $f$ is Lipschitz continuous and let $\lipschitz{f}(y), \lipschitz{f}(z) \geq 0$ be the constants (w.r.t. $y$ resp. $z$).\footnote{
    	Let $\mathcal{O}\subseteq\R^n$ and $\psi\colon\mathcal{O}\to \R^m$ be Lipschitz continuous.
		Canonically we let $\lipschitz{\psi}\geq 0$ denote the Lipschitz constant.
    }
    Assume there is $\rho > \lipschitz{f}(z)^2 + 2 \lipschitz{f}(z)$ such that
    $\sup_{x\in\domain} \E [ \exp(\rho \tau^x) ] < \infty$. \close
\end{exp_bsde_assumption}
\begin{theorem}\label{thm:ExistenceAndUniqueness}
    Assume \hyperref[assumption:domain]{\emph{(Dom)}} and \hyperref[assumption:fbsde]{\emph{(FBSDE)}} hold for the same $\gamma\in (0,1)$ and there is $\rho>0$ such that \hyperref[assumption:exp_EM]{(e-$2$)} is satisfied.
    Then the BSDE~\eqref{eqnRandomHorizonBSDE} has a unique solution $(Y,Z) \in \M_{\rho}^2(\tau; \R)\times\M_{\rho}^2(\tau; \R^d)$ and the PDE~\eqref{eqnPDE} has a unique solution $u\in \HoelderSpace{3}{\gamma} (\overline{\domain})$. \close
\end{theorem}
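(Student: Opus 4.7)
The plan is to prove the PDE well-posedness first by classical elliptic theory, then obtain the BSDE well-posedness by invoking Pardoux's random-horizon framework \cite{PardouxRandomTimeBSDE}, and finally verify that the two objects agree through the Feynman-Kac correspondence \eqref{eqnIdentification}.

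\textbf{Step 1 (PDE).} Under \hyperref[assumption:domain]{(Dom)} the boundary $\partial\domain$ is of class $\HoelderSpace{3}{\gamma}$, and under \hyperref[assumption:fbsde]{(FBSDE)} the operator $\InfGen$ is uniformly elliptic on $\overline{\domain}$ with Hölder-regular coefficients, while the nonlinearity $f(\cdot,u,\sigma^\top\Diff u)$ satisfies both the sign condition $\sign(u)f(x,u,p)\leq C(\|p\|+1)$ and the structure conditions I)--II). These are precisely the hypotheses of the Gilbarg--Trudinger existence theory for quasilinear/semilinear elliptic Dirichlet problems (see \cite[Ch.~15, esp.~Thm.~15.11]{GilbargTrudinger}): the sign condition yields a global $L^{\infty}$-bound on any solution via the classical maximum principle, and the structure conditions then yield interior and boundary gradient bounds. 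Combining with the De~Giorgi/Ladyzhenskaya--Ural'tseva Hölder estimates for $\Diff u$ provides existence of a classical solution $u\in\HoelderSpace{2}{\gamma}(\overline{\domain})$. Uniqueness follows from the standard comparison principle, using the Lipschitz assumption on $f$ from \hyperref[assumption:exp_bsde]{(e-$2$)}. Finally, because $\mu,\sigma,g,\partial\domain$ enjoy the additional $\HoelderSpace{3}{\gamma}$ regularity postulated in (Dom) and (FBSDE) and $f\in\C^{1}$, bootstrapping by Schauder estimates \cite[Thm.~6.19]{GilbargTrudinger} lifts $u$ from $\HoelderSpace{2}{\gamma}$ to $\HoelderSpace{3}{\gamma}(\overline{\domain})$.

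\textbf{Step 2 (BSDE).} With $u\in\HoelderSpace{3}{\gamma}(\overline{\domain})$ in hand and the forward SDE \eqref{eqnForwardDynamics} admitting a unique strong solution in $\mathcal{S}^p(\tau)$ for any $p>1$, I would set $Y_t\defined u(X_t)$ and $Z_t\defined\sigma(X_t)^\top\Diff u(X_t)$ on $[0,\tau]$ and apply Itô's formula to $u(X_t)$: the PDE \eqref{eqnPDE} turns the drift part of $\de u(X_t)$ into $-f(X_t,Y_t,Z_t)\de t$, which shows that $(Y,Z)$ satisfies the BSDE \eqref{eqnRandomHorizonBSDE}. Integrability of $(Y,Z)$ in the weighted spaces $\M_{\rho}^2(\tau)$ is immediate from the uniform boundedness of $u$ and $\sigma^\top\Diff u$ on $\overline{\domain}$ together with the exponential moment $\sup_{x\in\domain}\E[\exp(\rho\tau^x)]<\infty$ of \hyperref[assumption:exp_bsde]{(e-$2$)}.

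\textbf{Step 3 (Uniqueness for the BSDE).} For uniqueness in $\M_{\rho}^2(\tau;\R)\times\M_{\rho}^2(\tau;\R^d)$ I would appeal directly to Pardoux's a priori estimate for random-horizon BSDEs \cite{PardouxRandomTimeBSDE}. Given two solutions $(Y^i,Z^i)$, applying Itô to $e^{\rho t}|Y^1_t-Y^2_t|^2$ and using the Lipschitz bounds $\lipschitz{f}(y),\lipschitz{f}(z)$ yields, after standard Young-inequality splitting, an inequality of the form
\begin{equation*}
\E\Big[e^{\rho t}|Y^1_t-Y^2_t|^2+\!\int_{t}^{\tau}\!\!e^{\rho s}\big(\tfrac{\rho-\lipschitz{f}(z)^2-2\lipschitz{f}(z)-\varepsilon}{\cdot}\big)\big(|\Delta Y_s|^2+|\Delta Z_s|^2\big)\de s\Big]\leq 0
\end{equation*}
for $\varepsilon>0$ small. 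Here the gap $\rho>\lipschitz{f}(z)^2+2\lipschitz{f}(z)$ from \hyperref[assumption:exp_bsde]{(e-$2$)} is exactly what makes the bracketed factor strictly positive, forcing $(Y^1,Z^1)=(Y^2,Z^2)$.

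The main obstacle I anticipate is Step 1: verifying that the precise combination of the sign condition and structure conditions I)--II) fits into an off-the-shelf existence theorem from \cite{GilbargTrudinger} without further hypotheses (e.g.\ natural growth in $p$ matching the uniform ellipticity). Once the $\HoelderSpace{2}{\gamma}$ solution is produced, the Schauder bootstrap and the probabilistic Steps 2--3 are routine.
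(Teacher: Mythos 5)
Your architecture largely mirrors the paper's proof (existence of a $\HoelderSpace{2}{\gamma}(\overline{\domain})$ solution from the quasilinear theory in \cite[Ch.~15]{GilbargTrudinger}, the Feynman--Kac construction $Y=u(X)$, $Z=\sigma^\top\Diff u(X)$ with the membership in $\M_{\rho}^2(\tau;\R)\times\M_{\rho}^2(\tau;\R^d)$ coming from boundedness on $\overline{\domain}$ plus the exponential moment of $\tau$, BSDE uniqueness via \cite{PardouxRandomTimeBSDE}, and the Schauder bootstrap to $\HoelderSpace{3}{\gamma}(\overline{\domain})$ by freezing the nonlinearity and applying \cite[Thm.~6.19]{GilbargTrudinger}). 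However, the uniqueness claim in your Step 1 is a genuine gap: Lipschitz continuity of $f$ in $u$ does \emph{not} yield a comparison principle for the semilinear Dirichlet problem \eqref{eqnPDE}. The classical uniqueness/comparison results for quasilinear elliptic equations require monotonicity of the zeroth-order term in $u$ (or a smallness condition tying the Lipschitz constant to the domain, essentially to its principal eigenvalue), neither of which is assumed here; nonlinearities such as $f(x,u,p)=\lambda\sin u$ satisfy \hyperref[assumption:fbsde]{(FBSDE)} (sign and structure conditions included) yet produce multiple classical solutions on suitable domains for large $\lambda$. Tellingly, your Step 1 never uses the exponential-moment condition involving $\rho$, which is exactly the hypothesis that rules such examples out.

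The paper avoids this by \emph{not} proving PDE uniqueness with PDE tools: given two solutions $u,\bar u\in\HoelderSpace{2}{\gamma}(\overline{\domain})$, it maps both through \eqref{eqnIdentification} to solutions of the BSDE \eqref{eqnRandomHorizonBSDE} in $\M_{\rho}^2(\tau;\R)\times\M_{\rho}^2(\tau;\R^d)$, invokes uniqueness for the random-horizon BSDE from \cite[Thm.~3.4]{PardouxRandomTimeBSDE} -- this is where the gap between $\rho$ and the Lipschitz constants of $f$, together with $\sup_{x\in\domain}\E[\exp(\rho\tau^x)]<\infty$, is consumed -- and reads off $u(x)=Y_0=\bar u(x)$ for every $x\in\domain$; only afterwards does it bootstrap the regularity of the now-unique $u$. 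Your Steps 2 and 3 already contain all the ingredients for this argument, so the repair is to delete the comparison-principle claim and instead deduce PDE uniqueness from BSDE uniqueness applied to the two candidate solutions, keeping the order: existence of a $\HoelderSpace{2}{\gamma}$ solution first, then BSDE uniqueness, then PDE uniqueness, then the $\HoelderSpace{3}{\gamma}$ upgrade.
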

\begin{proof}
    By \cite[Theorem 15.10]{GilbargTrudinger} there exists a solution $u$ of \eqref{eqnPDE} of class $\HoelderSpace{2}{\gamma}(\overline{\domain})$. Let $x\in\domain$ and suppose that $\bar u\in\HoelderSpace{2}{\gamma}(\overline{\domain})$ is another solution of \eqref{eqnPDE}. Set $\widetilde{Y}_t\defined \bar{u}(X_t)$ and $\widetilde{Z}_t\defined \sigma^\top(X_t)\Diff\! \bar{u}(X_t)$ then $(\widetilde{Y}, \widetilde{Z})\in \M_{\rho}^2(\tau; \R)\times\M_{\rho}^2(\tau; \R^d)$ is a solution of \eqref{eqnRandomHorizonBSDE}. 
    The same holds for $(Y,Z)$ if we set $Y_t\defined u(X_t)$ and $Z_t\defined \sigma^\top(X_t)\Diff\! u(X_t)$. From \cite[Theorem 3.4]{PardouxRandomTimeBSDE} we obtain uniqueness of the BSDE solution, i.e. $Y=\widetilde{Y}$ and in particular $u(x)=Y_0=\widetilde{Y}_0=\bar u(x)$. Finally setting $\widetilde{u} \defined u$ and $\widetilde{f} \defined f (\cdot, \widetilde{u}, \sigma^\top \Diff\! \widetilde{u})$ it follows that $u$ is a solution of the linear Dirichlet Problem
    $ \InfGen [u] + \widetilde{f} = 0\ \text{on}\ \domain$ and $u=g\ \text{on}\ \partial\domain $ whence \cite[Theorem 6.19]{GilbargTrudinger} implies $u\in\HoelderSpace{3}{\gamma} (\overline{\domain})$.
\end{proof}
In the following result (see Theorem~\ref{thm:PreliminaryResultsBGG}) we collect results from \cite{BGG_forward_exit} which will be used in our analysis below.
In order to apply them we need the following assumption.
\begin{exp_EM_assumption}\label{assumption:exp_EM}
%
%
    Assume that there are $\alpha >0$, $q\geq 4$ and $\beta > \tfrac{qd}{q-1} (6\lipschitz{\mu} +3q\lipschitz{\sigma}^2)$ such that $\prob [ \tau \geq k ] \leq \alpha \exp (-\beta k)$ for all $k\in\nat_0$. \close
\end{exp_EM_assumption}
\begin{theorem}\label{thm:PreliminaryResultsBGG}
    Let $\mu$ and $\sigma$ be Lipschitz and bounded. Assume there is $R>0$ such that $\sigma\sigma^\top$ is uniformly elliptic on $\domain + B_R$, where $B_R$ denotes the open ball centered at zero with radius $R$.
    If $\partial\domain$ is of class $\C^2$ then there is a constant $c>0$ such that 
    $\E [ \exp (c\StoppD) ] < \infty$.
    
    Particularly if \hyperref[assumption:exp_EM]{(e-$2$)} is satisfied, then for each $p\in\nat$ there is $c_p>0$ such that for all $h\leq h_0$;
    \begin{align}\label{eqn:EulerSchemeExitTime_and_Space_Error}
        \E \big[ | \tau - \StoppD |^p \big] \leq c_p h^{\frac{1}{2}} 
        \qquad\text{and}\qquad
        \E\Big[ |X_{\tau} - \X_{\StoppD}|^2 \Big] \leq\, C h^{\frac{1}{2}}. \closeEqn
    \end{align}
\end{theorem}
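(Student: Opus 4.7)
The statement explicitly collects results from \cite{BGG_forward_exit}, so my plan is essentially a citation plan: identify the two relevant theorems in that reference and verify that the present hypotheses match those needed there.

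For the exponential integrability $\E[\exp(c\StoppD)]<\infty$, I would invoke the discrete-time analogue of the classical exit-time moment bound. The underlying argument is standard: uniform ellipticity of $\sigma\sigma^\top$ on the enlarged set $\domain+B_R$, combined with boundedness of $\mu,\sigma$ and $\C^2$ regularity of $\partial\domain$, yields a uniform lower bound $p_0>0$, independent of the starting point in $\overline{\domain}$, on the probability that the Euler scheme exits $\domain$ within $\lceil T_0/h\rceil$ steps for some fixed horizon $T_0$. Iterating over consecutive blocks gives the geometric tail $\prob[\StoppD>kT_0]\leq (1-p_0)^k$ and hence $\E[\exp(c\StoppD)]<\infty$ for every $c<|\log(1-p_0)|/T_0$.

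For the two rates in \eqref{eqn:EulerSchemeExitTime_and_Space_Error}, I would cite the corresponding Euler-Maruyama exit-time theorems from \cite{BGG_forward_exit}. The inputs are (i) the exponential moment of $\tau$ provided by assumption \hyperref[assumption:exp_EM]{(e-$2$)}, (ii) the exponential moment of $\StoppD$ just established, and (iii) uniform ellipticity near $\partial\domain$. The space bound $\E[|X_\tau-\X_{\StoppD}|^2]\leq Ch^{1/2}$ is the classical strong order $1/2$ of Euler-Maruyama on compact intervals, localised to the exit time via the Markov inequality using $\E[\exp(c\StoppD)]<\infty$ and $\E[\exp(\rho\tau)]<\infty$. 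The time-error bound is first proved as a weak $L^1$ or $L^2$ rate of order $h^{1/2}$ and then upgraded to arbitrary $p\in\nat$ via Hölder's inequality, using the joint exponential integrability of $\tau$ and $\StoppD$ to absorb the high-moment factor.

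The main obstacle for redoing the proof from scratch is the $p$-th moment bound for $|\tau-\StoppD|$: a direct boundary-layer estimate only yields order $h^{1/4}$ on general domains, and reaching the sharp $h^{1/2}$ requires a Gobet-type one-sided barrier argument or a careful control of the overshoot of the Euler scheme across $\partial\domain$. This is precisely why the quantitative assumption \hyperref[assumption:exp_EM]{(e-$2$)} is phrased with $\beta>\tfrac{qd}{q-1}(6\lipschitz{\mu}+3q\lipschitz{\sigma}^2)$: the specific scaling ensures that $\sup_{t\leq\tau}\|X_t\|$ has enough $q$-th moments uniformly in the starting point for the interpolation step in \cite{BGG_forward_exit} to go through and produce a constant $c_p$ independent of $h$.
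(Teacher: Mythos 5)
Your overall plan coincides with the paper's for the rate estimates: both boil down to checking the hypotheses of the exit-time results in \cite{BGG_forward_exit} (the paper invokes \cite[Theorem 3.4]{BGG_forward_exit}, using the $\C^2$ boundary to build the signed distance function and \hyperref[assumption:exp_EM]{(e-$2$)} for the tail of $\tau$), so that part is fine as a citation plan. The gap is in the first claim, $\E[\exp(c\StoppD)]<\infty$. You replace the paper's verification by a block-iteration sketch whose pivotal step --- a lower bound $p_0>0$, uniform both in the starting point \emph{and in the stepsize} $h$, for the probability that the Euler--Maruyama scheme leaves $\domain$ within a fixed horizon $T_0$ --- is simply asserted as ``standard''. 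That uniform-in-$h$ non-degeneracy of the discrete scheme is exactly the content of the statement and does not follow immediately from ellipticity and boundedness of the coefficients; it is what the paper outsources: it bounds $\sup_{x}\E[\tau_R^x]$ for the \emph{continuous} diffusion on the enlarged domain $\domain+B_R$ via Freidlin's Lemma III.3.1, passes to a tail bound by Markov's inequality, feeds this into \cite[Lemma A.4]{BGG_forward_exit} to verify Assumption (L) there, and only then obtains the exponential moment of $\StoppD$ from \cite[Lemma 2.8]{BGG_forward_exit}. If you want your direct argument to stand on its own, you must actually prove the uniform $p_0$ (e.g.\ by a one-dimensional projection of the scheme and a random-walk comparison using the diagonal lower bound $([\sigma\sigma^\top])_{ii}\geq\nu$), or cite the BGG lemmas as the paper does.

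Two side remarks in your commentary are also off, though not load-bearing since you ultimately cite \cite{BGG_forward_exit}: the bound $\E[|X_\tau-\X_{\StoppD}|^2]\leq Ch^{1/2}$ is not the classical strong order $1/2$ localised by Markov's inequality --- the dominant contribution is the exit-time mismatch $\E[|\tau-\StoppD|]=O(h^{1/2})$ combined with path regularity, which is why the squared error is only $O(h^{1/2})$ rather than $O(h)$; and the threshold $\beta>\tfrac{qd}{q-1}(6\lipschitz{\mu}+3q\lipschitz{\sigma}^2)$ in \hyperref[assumption:exp_EM]{(e-$2$)} has nothing to do with moments of $\sup_{t\leq\tau}\|X_t\|$, which is trivially bounded because $X$ stays in the bounded domain $\domain$ up to $\tau$; it controls the tail of the (random) discrete horizon against the growth of the scheme's increments in the estimates of \cite{BGG_forward_exit}.
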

\begin{proof}
    By assumption $\sigma\sigma^\top$ is uniformly elliptic on $\domain_R \defined \domain + B_R$. Let $\nu >0$ denote the respective lower bound, i.e. $\xi^\top[\sigma\sigma^\top](x)\xi \geq \nu \xi^\top\xi$ for all $x\in\domain_R$, $\xi\in\R^d$. Then particularly
    $ ([\sigma\sigma^\top] (x))_{ii} \geq \nu $ for all $x\in\domain_R$ and each $i\in\{1,\dots,d\}$. For $x\in\domain_R$ let $\tau_{R}^x$ denote the first exit of $X^x$ (see \eqref{eqnForwardDynamics}) from $\domain_R$. From \cite[Lemma III.3.1]{Freidlin1985} we obtain $c_1 > 0$ such that
    $ \sup_{x\in\domain_R} \E [ \tau_{R}^x ] \leq c_1 < \infty $. Using the Markov inequality we conclude 
    \begin{equation}
        \sup_{x\in\domain_R} \prob [ \tau_{R}^{x} \geq T ] \leq \sup_{x\in\domain_R} \tfrac{1}{T} \E [ \tau_{R}^x ] \leq \tfrac{c_1}{T} \quad\text{for all } T>0
    \end{equation}
    With this we can apply \cite[Lemma A.4]{BGG_forward_exit} and obtain a constant $L>0$ such that
    \begin{equation}\label{eqn:BGG_Assumption_L}
        \sup_{x\in\domain} \E_{\Stopp} \big[ \Bar{\theta}^{x} (\Stopp) - \Stopp \big] + \sup_{x\in\domain} \E_{\Stopp} \big[ \theta^{x} (\Stopp) - \Stopp \big] \leq L \quad\text{for all }\Stopp\ \text{finite stopping time}
    \end{equation}
    where $\Bar{\theta}^{x} (\Stopp) \defined \inf \{ t\geq\Stopp \ |\ t\in\unboundedTimeGrid, X_t^x\notin\domain \}$ and $\theta^x (\Stopp) \defined \inf \{ t\geq\Stopp \ |\ X_t^x \notin \domain \}$. Notice \eqref{eqn:BGG_Assumption_L} is \cite[Assumption (L)]{BGG_forward_exit}, whence with \cite[Lemma 2.8]{BGG_forward_exit} we obtain $c>0$ such that $\exp(c\StoppD) \in L^1$. 

    The second part is immediate from \cite[Theorem 3.4]{BGG_forward_exit}. Regarding the prerequisites observe that by assumption $\partial\domain$ is of class $\C^2$, thus we can construct the required signed distance function, see e.g. \cite[Section 14.6]{GilbargTrudinger}, and moreover \hyperref[assumption:exp_EM]{(e-$2$)} ensures the necessary tail behaviour of $\tau$. 
\end{proof}
Next we state a corollary of \cite[Proposition 10.4]{Seifried2025bsdeDiscretization}.\footnote{
    With the notation of the reference; we mention that the corollary below is obtained by applying \cite[Proposition 10.4]{Seifried2025bsdeDiscretization} result for $q=4$ and $\varepsilon=\tfrac{1}{4}$.
} 
For this we require an assumption on the tail decay, which in contrast to Assumption~\hyperref[assumption:exp_EM]{(e-$2$)} is concerned with both $\tau$ and $\StoppD$.
\begin{exp_EM_bound_assumption}\label{assumption:exp_EM_bound}
    %
    %
    Assume there is a constant $\alpha >0$ and a scalar $\beta > 8d (6\lipschitz{\mu} + 24 \lipschitz{\sigma}^2)$ such that $\prob [\tau\vee\StoppD \geq k] \leq \alpha\exp(-\beta k)$ for all $k\in\nat_0$. \close
\end{exp_EM_bound_assumption}
\begin{corollary}\label{corollary:BSDE_Discretization_EM_Stopp}
    Let $\mu$ and $\sigma$ be Lipschitz and bounded. Assume Assumption~\hyperref[assumption:exp_EM_bound]{(e-$3$)} is satisfied, then there is $C>0$ such that for all $h\in(0,1)$ it holds that
    \begin{equation}
        \E \big[ \| X_{\tau} - \X_{\StoppD} \|^4 \big] \leq C h^2 + C \E \big[ | \tau-\StoppD|^{6} \big]^\frac{1}{2} + C \E \big[ | \tau-\StoppD|^{2} \big]^\frac{1}{2} . \closeEqn
    \end{equation}
\end{corollary}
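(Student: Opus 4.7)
The plan is to apply \cite[Proposition 10.4]{Seifried2025bsdeDiscretization} as a black box with the specific parameter choices $q=4$ and $\varepsilon=\tfrac{1}{4}$; no additional computation beyond checking the hypotheses is envisaged. Accordingly the proof proposal is primarily a verification step.

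First I would recall the statement of \cite[Proposition 10.4]{Seifried2025bsdeDiscretization} in the notation of our paper. The result yields, for general $q\geq 2$ and $\varepsilon\in(0,\tfrac12)$, a bound of the shape
\begin{equation*}
	\E\bigl[\|X_\tau - \X_{\StoppD}\|^{q}\bigr]
	\;\leq\; C\, h^{\,q(\frac12-\varepsilon)} \;+\; C\,\E\bigl[|\tau-\StoppD|^{r_1}\bigr]^{s_1} \;+\; C\,\E\bigl[|\tau-\StoppD|^{r_2}\bigr]^{s_2},
\end{equation*}
where the exponents $r_i, s_i$ and the constant $C$ depend on $q$, $\varepsilon$, the Lipschitz and sup-norm bounds of $\mu, \sigma$, as well as on the exponential moment of $\tau\vee\StoppD$ required for the Cauchy-Schwarz steps internal to the proof. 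Substituting $q=4$, $\varepsilon=\tfrac14$ should turn the stepsize exponent into $q(\tfrac12-\varepsilon)=4\cdot\tfrac14=1$ before one absorbs a further $\sqrt{h}$-factor into the higher term (or, equivalently, the reference's formulation already produces $h^{q/2}=h^{2}$ with $\varepsilon$ calibrating the integrability gap); either way the statement collapses to the $Ch^{2}$ summand claimed, with the remaining two terms reading $\E[|\tau-\StoppD|^{6}]^{1/2}$ and $\E[|\tau-\StoppD|^{2}]^{1/2}$ as in the corollary.

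Second I would verify each hypothesis of the cited proposition under our standing assumptions. Lipschitz continuity and boundedness of $\mu,\sigma$ are imposed by the corollary itself. The nontrivial check concerns exponential integrability: the cited proposition needs $\E[\exp(\beta'(\tau\vee\StoppD))]<\infty$ for some threshold $\beta'$ determined by the Bernstein/Gronwall-type estimates used internally. With $q=4$ this threshold is precisely $\beta'=2qd(6\lipschitz{\mu}+cq\lipschitz{\sigma}^{2})=8d(6\lipschitz{\mu}+24\lipschitz{\sigma}^{2})$, which is exactly the constant appearing in Assumption~\hyperref[assumption:exp_EM_bound]{(e-$3$)}; the tail bound $\prob[\tau\vee\StoppD\geq k]\leq\alpha\exp(-\beta k)$ then gives the requisite exponential moment by summation over $k\in\nat_0$.

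The main (indeed the only) obstacle is matching the constants $8d$, $6\lipschitz{\mu}$ and $24\lipschitz{\sigma}^{2}$ in (e-$3$) to the exact threshold extracted from \cite[Proposition 10.4]{Seifried2025bsdeDiscretization} at the chosen parameters; this is purely bookkeeping, and once done the conclusion of the corollary is literally the specialisation of the cited inequality. No independent stochastic analysis is required beyond that performed in the reference.
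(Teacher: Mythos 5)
Your proposal matches the paper's own treatment: the corollary is stated there without a separate proof, being obtained exactly as you describe by invoking \cite[Proposition 10.4]{Seifried2025bsdeDiscretization} with $q=4$ and $\varepsilon=\tfrac{1}{4}$, with Assumption~\hyperref[assumption:exp_EM_bound]{(e-$3$)} supplying the required tail/exponential-moment condition on $\tau\vee\StoppD$ and the Lipschitz and boundedness hypotheses imposed directly. The only caveat is that your reconstruction of the reference's exponent bookkeeping is speculative, but since the verification of hypotheses is the entire content of the argument, the approach is essentially identical to the paper's.
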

We conclude this section by stating properties of the Euler-Maruyama scheme \eqref{defEulerMaruyama}, and of a typically introduced adapted and continuous interpolation, see e.g. \cite{KloedenPlaten}. We consider 
\begin{equation}\label{def:EulerMaruyamaContinuousInterpolation}
	\Xc_t \defined x + \int_0^t  \sum_{\ell\in\unboundedTimeGrid} \1_{ \{s\in(\ell, \ell+h]\} } \mu (\X_{\ell}) \de s + \int_0^t \sum_{\ell\in\unboundedTimeGrid} \1_{ \{s\in(\ell, \ell+h]\} } \sigma (\X_{\ell}) \de W_s\quad t\geq 0
\end{equation}
and let $\XcExit$ denote the first exit of $\Xc$ from $\domain$.\footnote{
    Notice $\XcExit \leq \StoppD$. Thus whenever there is $c>0$ such that $\exp(c\StoppD) \in L^1$ then particularly $\exp(c\XcExit)\in L^1$. See Theorem~\ref{thm:PreliminaryResultsBGG} for sufficient conditions.
}
Then it is well-known that;
\begin{proposition}[Properties of stopped Euler-Maruyama]\label{prop:PropertiesOfStoppedEuler}
    Let $q\geq 1$ and $t\in\unboundedTimeGrid$. Then\footnote{
        For a given map $\psi\colon\R^d\to \R$ we use the notation $\supremum{\psi} \defined \sup_{x\in \overline{\domain}} |\psi (x)| $. 
    }
   \begin{equation}
        \|\Xc_{s} - \Xc_{t}\|^q \leq 2^{q-1} \big( \supremum{\mu}^q\, h^q + \supremum{\sigma}^q\, \| W_{s} - W_{t} \|^q \big) \quad\text{for all } s\in[t,t+h] \quad\text{on } \{t <\StoppD \}. \closeEqn
    \end{equation}
\end{proposition}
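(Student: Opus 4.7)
The plan is to unwind the definition of the continuous Euler-Maruyama interpolation on a single step, use the stopping event to control the coefficients, and then apply the elementary convexity inequality $(a+b)^q \le 2^{q-1}(a^q + b^q)$.

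First I would observe that for $t\in\unboundedTimeGrid$ and $s\in[t,t+h]$, the indicator $\1_{\{r \in (\ell, \ell+h]\}}$ in \eqref{def:EulerMaruyamaContinuousInterpolation} picks out only the index $\ell=t$ on the interval $r\in(t,s]$, so the definition of $\Xc$ telescopes to
\begin{equation*}
    \Xc_s - \Xc_t = \mu(\X_t)(s-t) + \sigma(\X_t)(W_s - W_t).
\end{equation*}
This is the only nontrivial structural step, and it is purely bookkeeping.

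Next I would exploit the event $\{t<\StoppD\}$. By definition $\StoppD = \inf\{r\in\unboundedTimeGrid \mid \X_r\notin\domain\}$, so on this event $\X_t \in\domain\subset\overline{\domain}$. Consequently
\begin{equation*}
    \|\mu(\X_t)\| \le \supremum{\mu}
    \qquad\text{and}\qquad
    \|\sigma(\X_t)\| \le \supremum{\sigma}.
\end{equation*}
Combined with $s-t\le h$ and the triangle inequality this gives
\begin{equation*}
    \|\Xc_s - \Xc_t\| \le \supremum{\mu}\,h + \supremum{\sigma}\,\|W_s - W_t\|.
\end{equation*}

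Finally I would raise to the $q$-th power and invoke the convexity bound $(a+b)^q \le 2^{q-1}(a^q+b^q)$ for $a,b\ge 0$ and $q\ge 1$ to conclude
\begin{equation*}
    \|\Xc_s - \Xc_t\|^q \le 2^{q-1}\bigl(\supremum{\mu}^q h^q + \supremum{\sigma}^q \|W_s-W_t\|^q\bigr).
\end{equation*}
There is no real obstacle here; the only points requiring a brief justification are the reduction of the stochastic integral to a single summand via the indicator structure of \eqref{def:EulerMaruyamaContinuousInterpolation}, and the implication $\{t<\StoppD\}\subseteq\{\X_t\in\domain\}$ used to bound the coefficients by their suprema over $\overline{\domain}$.
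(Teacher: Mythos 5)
Your argument is correct and coincides with the paper's own proof: both unwind the interpolation \eqref{def:EulerMaruyamaContinuousInterpolation} on the single step to obtain $\Xc_s - \Xc_t = \mu(\X_t)(s-t) + \sigma(\X_t)(W_s-W_t)$, use $\{t<\StoppD\}\subseteq\{\X_t\in\domain\}$ to bound the coefficients by $\supremum{\mu}$, $\supremum{\sigma}$, and finish with the elementary bound $|a+b|^q\leq 2^{q-1}(|a|^q+|b|^q)$. Nothing further is needed.
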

\begin{proof}
    Let $t\in \unboundedTimeGrid$ and $s\in [t, t+h]$. By definition of the interpolation we have 
    \begin{equation}
        \Xc_{s} - \Xc_{t} = \int_{t}^{s} \mu (\X_t) \de r + \int_{t}^{s} \sigma (\X_t) \de W_r = \mu (\X_t) (s-t) + \sigma (\X_t) (W_s-W_t)
    \end{equation}
    On $ \{ t < \StoppD \}$ we have $\X_t \in \domain$ and thus we obtain the claim from an elementary bound.\footnote{
        It holds that $|a+b|^q \leq 2^{q-1} |a|^q + 2^{q-1} |b|^q$ for any $a,b\in\R$
    }
\end{proof}
\section{General Loss Rate}\label{sec:GeneralLossRate}

In this section we present our main results: Theorem~\ref{thm:general_dynamical_LossRate} is a bound of the dynamical penalization term of the loss functional (cf. \eqref{defLossFunctional_PSI}) and as a consequence we derive Theorem~\ref{thm:general_LossRate} -- the rate in terms of the underlying stepsize $h$ and a universal approximation distance between the candidate $\nn$ and the exact solution $u$ of \eqref{eqnPDE}. 

The restriction $h\leq h_0$ is due to the use of Theorem~\ref{thm:PreliminaryResultsBGG} which collects results from \cite{BGG_forward_exit}.
\begin{theorem}[Dynamical Loss Rate]\label{thm:general_dynamical_LossRate}
    Let $\Psi$ denote a random variable that is bounded away from zero and satisfies $\Psi \in L^{p}$ for some $p\geq 32$. 
    Assume \hyperref[assumption:domain]{\emph{(Dom)}} and \hyperref[assumption:fbsde]{(FBSDE)} hold for the same $\gamma\in(0,1]$ and assume \hyperref[assumption:exp_bsde]{(e-$1$)} and \hyperref[assumption:exp_EM]{(e-$2$)} are satisfied.
    Then there is $C >0$ (not depending on $h$);
    \begin{align*}
        \E \big[\Psi \ \loss^{\bar\tau} (U) \big] \leq C h^\frac{3}{2} + C h \E \big[ (\StoppD - \XcExitPlus)^2 \big]^\frac{1}{2} + C {\|U-u\|}_\domain^2 \quad\text{for all } h\leq h_0
    \end{align*}
    where $\XcExitPlus \defined \min \{ t\in\unboundedTimeGrid \,|\, \XcExit \leq t \}$ is the first discrete-time after $\XcExit$. \close
\end{theorem}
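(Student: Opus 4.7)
The plan is to add and subtract the exact PDE solution $u$ in each summand, split the sum over $\unboundedTimeGrid\cap[0,\StoppD)$ into an interior part on which the Euler interpolation $\Xc$ has not yet exited $\overline{\domain}$ and a short boundary part on which it has, and factor $\Psi$ out via Hölder's inequality. The assumption $\Psi\in L^p$ with $p\geq 32$ provides the room to pair $\Psi$ against the $L^{p'}$-norm of the random cardinality $\StoppD/h$; all moments of $\StoppD$ are finite by Theorem~\ref{thm:PreliminaryResultsBGG} under \hyperref[assumption:exp_EM]{(e-$2$)}. Applying $(a+b)^2\leq 2a^2+2b^2$ then yields three sums to control — interior-for-$u$, cross-difference $\lossSummand{t}{\unboundedTimeGrid}(U)-\lossSummand{t}{\unboundedTimeGrid}(u)$, and boundary-for-$U$ — each producing one of the three terms in the claim.

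The heart of the argument is the interior sum $\sum_{t+h\leq\XcExit}|\lossSummand{t}{\unboundedTimeGrid}(u)|^2$, where $\Xc_s\in\overline{\domain}$ throughout $[t,t+h]$ and Itô's formula applies to $u\circ\Xc$ (using $u\in\HoelderSpace{3}{\gamma}(\overline{\domain})$). Writing $\sigma_t=\sigma(\X_t)$ and $\InfGen_t\psi=\mu(\X_t)^\top \Diff\psi+\tfrac12\trace(\sigma_t\sigma_t^\top \Diff^2\psi)$, a first Itô step gives $u(\X_{t+h})-u(\X_t)=\int_t^{t+h}\InfGen_t u(\Xc_s)\de s+\int_t^{t+h}\sigma_t^\top \Diff u(\Xc_s)\cdot\de W_s$. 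Reapplying Itô to both integrands around $\X_t$, together with the product identity $\int_t^{t+h}(W_s-W_t)^\top A\,\de W_s=\tfrac12\Delta W^\top A\Delta W-\tfrac12\trace(A)h$ at $A=[\sigma^\top \Diff^2 u\,\sigma](\X_t)$, shows that the last two correction terms of \eqref{defLossSummand} cancel the frozen $\de W\,\de W$ double integral exactly, while the $h\Delta W$ correction cancels the frozen mixed $\de r\,\de W_s$ integral up to a residual $\int_t^{t+h}(s-t-h)\de W_s$ of $L^2$-size $O(h^{3/2})$. The leading first-order piece $[\InfGen u+f(\cdot,u,\sigma^\top \Diff u)](\X_t)h$ vanishes since $\X_t\in\domain$ and $u$ solves \eqref{eqnPDE}; the remaining residuals (Hessian-replacement errors $|\Diff^2 u(\Xc_r)-\Diff^2 u(\X_t)|=O(\sqrt{h})$ inside the frozen double integrals, and the higher-order $\de r\,\de r$ Taylor terms) are likewise of $L^2$-size $O(h^{3/2})$ per step. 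Squaring, summing the $O(\StoppD/h)$ contributions, and pairing with $\Psi$ via Hölder delivers the $Ch^{3/2}$ term.

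The remaining two sums are more routine. For the cross difference, $\lossSummand{t}{\unboundedTimeGrid}(U)-\lossSummand{t}{\unboundedTimeGrid}(u)$ is linear in $U-u$ outside a Lipschitz $f$-contribution; Taylor expansion of $(U-u)(\X_{t+h})-(U-u)(\X_t)$ at $\X_t$ (valid because $\Fct$ is bounded in $\HoelderSpace{3}{\gamma}(\R^d)$, with $u$ extended smoothly to $\R^d$ if needed) combined with the PDE identity at $\X_t\in\domain$ yields the per-step bound $C\|U-u\|_\domain h+C|\X_{t+h}-\X_t|^3$, which after squaring, summing and pairing with $\Psi$ produces the $C\|U-u\|_\domain^2$ contribution. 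For the boundary sum the Itô expansion of $u\circ\Xc$ is unavailable (since $\Xc$ has already exited), but $\X_t\in\domain$ still holds and $U$ is globally $\HoelderSpace{3}{\gamma}(\R^d)$-bounded, so a pure Taylor expansion of $U(\X_{t+h})-U(\X_t)$ combined with the PDE at $\X_t$ yields $|\lossSummand{t}{\unboundedTimeGrid}(U)|^2\leq C(\|U-u\|_\domain^2 h^2+h^4+|\X_{t+h}-\X_t|^6)$ per step, of expectation $O(h^2)$ (using Proposition~\ref{prop:PropertiesOfStoppedEuler}); summing over the $O((\StoppD-\XcExitPlus)/h)$ boundary steps and Cauchy--Schwarz against $\Psi$ delivers the $h\,\E[(\StoppD-\XcExitPlus)^2]^{1/2}$ contribution. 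The principal technical obstacle is the interior bookkeeping — matching each of the five correction terms in \eqref{defLossSummand} against the appropriate iterated integral and BDG-bounding every surviving residual at order $h^{3/2}$ — compounded by the Hölder pairing with $\Psi$, where the $O(h^{-1})$-blowup of $\|\StoppD/h\|_{p'}$ must be absorbed without spoiling the rate.
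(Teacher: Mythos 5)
Your per-step analysis is essentially sound and largely parallels the paper: the interior Itô/Taylor bookkeeping with the identity $\int_t^{t+h}(W_s-W_t)^\top A\,\de W_s=\tfrac12\Delta W^\top A\Delta W-\tfrac12\trace(A)h$ and the cancellation of the drift term through the PDE is exactly the paper's decomposition of $\lossSPre{t}(u)$ into the terms $I_t, II_t, III_t$, and your cross-difference bound matches Lemma~\ref{lemma:proofTriangulusLossDistance}. Your treatment of the steps after the exit of $\Xc$ is in fact a genuine deviation: you freeze at $\X_t$, which still lies in $\domain$ on $\{t<\StoppD\}$, and invoke the PDE there, whereas the paper deliberately avoids using $u$ after $\XcExit$ and pays for it with the $h\,\E[(\StoppD-\XcExitPlus)^2]^{1/2}$ term; your route is legitimate (given that $\Fct$ is bounded in $\HoelderSpace{3}{\gamma}(\R^d)$, so the Taylor remainder along the Euler step is controlled even outside $\overline{\domain}$) and yields per-step bounds at least as sharp.

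The genuine gap is the step you defer to the very last sentence: how to pass from per-step estimates to the $\Psi$-weighted \emph{random} sums. Your proposed mechanism -- Hölder to pair $\Psi\in L^p$ against the $L^{p'}$-norm of the cardinality $\StoppD/h$ -- does not work as stated for the terms whose per-step control is only in $L^q$ (the stochastic-integral residuals of size $O(h^{3/2})$ in $L^2$, and the $\sup_{s\in[t,t+h]}\|W_s-W_t\|^{4}$, $\|\Delta W_t\|^6$ contributions): bounding the sum by cardinality times a per-step bound requires a bound that is uniform over the random number of steps, i.e.\ moments of $\sup_{t\in\unboundedTimeGrid\cap[0,\StoppD)}\sup_{s\in[t,t+h]}\|W_s-W_t\|$, which cost extra powers of $h$; and per-step conditional expectations cannot be taken inside an expectation weighted by the global random variable $\Psi$. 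This is precisely the crux of the theorem, and it is what the paper's entire auxiliary apparatus exists for: Lemma~\ref{lemma:Girsanov} changes measure with the density process $\E_t[\Psi]/\E[\Psi]$ (here the assumption that $\Psi$ is bounded away from zero -- which you never use -- is essential), applies a Wald-type identity under $\Q$ (Lemma~\ref{lemma:WaldBound}), and the resulting quadratic-variation factors $\langle\E_{\circ}[\Psi]\rangle$ are made integrable via Lemmas~\ref{lemma:IntegrabilityQV_ClosedMartingale}--\ref{lemma:IntegrabilityQV_GirsanovGG}, which is where $p\geq 32$ enters; the crude sup-over-blocks device (via the Hölder-coefficient moment bound of \cite{Seifried2024HoelderMoments}) is used only for the single overlap step and already costs the $h^{3/2}$. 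A workable substitute along your lines would require, e.g., a triangle inequality in $L^{p'}$ term by term together with the exponential tails of $\StoppD$, but that argument is not in your proposal, and without some such device the claimed rates do not follow from the per-step bounds you establish.
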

Before presenting the proof we briefly sketch the overall strategy. Building on the fact that the exact solution $u$ of \eqref{eqnPDE} only satisfies the PDE on $\domain$, we consider an intrinsic decomposition of the loss random variable $\loss^{\StoppD}$ (cf. \eqref{defLossRandomVariable}).\footnote{
	Under our standing assumptions it is possible to extend $u\in\HoelderSpace{3}{\gamma} (\overline{\domain})$ to $\R^d$ while preserving the subsequently required regularity, see e.g. \cite[Lemma 2.20]{LeeSmoothManifolds}. 
	However for the extension we merely achieve a rate of magnitude as the intrinsic loss rate admits, since we cannot use the error estimate for the distance between $\StoppD$ and $\XcExit$.
}
We separate $\loss^{\StoppD}$ at the first exit $\XcExit$ of the Euler-Maruyama interpolation, i.e. we split the sum into one part that excludes excursions of $\Xc$ and another that has no (suitable) access to the exact solution of the PDE. The details of the decomposition are presented in Proposition~\ref{prop:Identification_LossSummand_with_PrePost}; the proof of Theorem~\ref{thm:general_dynamical_LossRate} consists of the individual bounds. 
We require additional notation. For $0\leq s_1\leq s_2 < \infty$ and $x_1,x_2 \in \R^d$ we set
\begin{equation}\label{defLossSummand_GeneralVersion}
    \begin{aligned}
        \mathcal{L}_{s_1,s_2} (\nn) (x_1,x_2) &\defined \nn(x_2) - \nn (x_1)    \\
        &\hspace{0.5cm} + f\big(x_1, \nn(x_1), [\sigma^\top\!\Diff\!\nn ](x_1) \big) (s_2-s_1)    \\
        &\hspace{1.0cm} - [\Diff\!\nn^\top\sigma] (x_1) (W_{s_2}-W_{s_1})     \\
        &\hspace{1.5cm} - [\mu^\top\!\Diff^2\!\nn\sigma](x_1) (s_2-s_1)(W_{s_2}-W_{s_1})    \\
        &\hspace{2.0cm} - \tfrac{1}{2} (W_{s_2}-W_{s_1})^\top [\sigma^\top\!\Diff^{2}\!\nn\sigma] (x_1) (W_{s_2}-W_{s_1})     \\
        &\hspace{2.5cm} + \tfrac{1}{2} \trace[\sigma^\top\!\Diff^{2}\!\nn\sigma] (x_1) (s_2-s_1)
    \end{aligned}
\end{equation}
The penalization terms defined in \eqref{defLossSummand} satisfy
$ \lossSummand{t}{\unboundedTimeGrid} (\nn) = \mathcal{L}_{t,t+h} (\nn) \big(\Xc_{t}, \Xc_{t+h}\big) $ for $t\in\unboundedTimeGrid$.
We introduce two terms which allow us to distinguish whether $\XcExit$ is still due to occur. We set
´\begin{align}\label{defLossPrevPost}
	\lossSPre{t} (\nn) & \defined \mathcal{L}_{t\wedge\XcExit,\ (t+h)\wedge\XcExit} (\nn) \big(\Xc_{t\wedge\XcExit}, \Xc_{(t+h)\wedge\XcExit} \big) \notag \\
	\lossSPost{t} (\nn) & \defined \mathcal{L}_{t\vee\XcExit,\ (t+h)\vee\XcExit} (\nn) \big(\Xc_{t\vee\XcExit}, \Xc_{(t+h)\vee\XcExit} \big)
\end{align}
\begin{proposition}\label{prop:Identification_LossSummand_with_PrePost}
    Assume \hyperref[assumption:domain]{\emph{(Dom)}} and \hyperref[assumption:fbsde]{(FBSDE)} hold for the same $\gamma\in(0,1]$. 
    Let $\Psi \in L^2$. 
    Then there is a constant $C>0$ (not depending on $h$) such that for all $h\in (0,1)$ it holds that
    \begin{equation}
    	\E \big[ \Psi \loss^{\StoppD} (\nn) \big] 
    	\leq 4 \E \Big[ \Psi\!\! \sum_{t\in\unboundedTimeGrid\cap[0,\XcExit)}\! \big| \lossSPre{t} (\nn) \big|^2 \Big]
    	+ 4 \E \Big[ \Psi\!\! \sum_{t\in\unboundedTimeGrid\cap[0,\StoppD)}\! \1_{ \{ \XcExit < t+h \} }\ \big| \lossSPost{t} (\nn) \big|^2 \Big] + 2 C h^\frac{3}{2}. \closeEqn
    \end{equation}
\end{proposition}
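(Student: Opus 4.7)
The plan is to establish a pointwise algebraic decomposition of each summand $\lossSummand{t}{\unboundedTimeGrid}(U)$ into the pre-exit part $\lossSPre{t}(U)$, the post-exit part $\lossSPost{t}(U)$, and a remainder $R_t$ supported only on the subinterval across which the continuous interpolation exit $\XcExit$ straddles. Applying the elementary inequality $(a+b+c)^2\le 4a^2+4b^2+2c^2$, summing over $t\in\unboundedTimeGrid\cap[0,\StoppD)$, multiplying by $\Psi$ and taking expectation then yields the claimed inequality, provided $\E[\Psi|R_t|^2]$ is uniformly controlled at order $h^{3/2}$.

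First I would establish the pointwise decomposition $\lossSummand{t}{\unboundedTimeGrid}(U) = \lossSPre{t}(U) + \lossSPost{t}(U) + R_t$ by a direct case analysis in the three regimes $\XcExit\geq t+h$, $\XcExit\leq t$, and $t<\XcExit<t+h$. The first two cases are immediate from the definitions, with one of the two principal terms vanishing identically (by the tautology $\mathcal{L}_{s,s}(U)(x,x)=0$) and $R_t=0$. In the straddling case the telescoping piece $U(\Xc_{t+h})-U(\Xc_t)$ splits exactly at $\XcExit$, while the remaining summands of $\mathcal{L}_{t,t+h}$ — those linear in $(s_2-s_1)$ or $(W_{s_2}-W_{s_1})$ and the quadratic $dW\otimes dW$ term — produce a finite list of explicit errors of schematic form $[\phi(\Xc_t)-\phi(\Xc_{\XcExit})]\cdot\Theta_t$ together with one unavoidable cross contribution $\Delta_1^{\!\top}B(\Xc_t)\Delta_2$, where $\Delta_1:=W_{\XcExit}-W_t$, $\Delta_2:=W_{t+h}-W_{\XcExit}$ and $B:=\tfrac12\sigma^{\!\top}\Diff^2\!U\sigma$. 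After this split the bookkeeping is transparent: $\lossSPre{t}(U)$ vanishes whenever $t\geq\XcExit$, so the pre-sum reduces to $t\in\unboundedTimeGrid\cap[0,\XcExit)$; $\lossSPost{t}(U)$ vanishes whenever $t+h\leq\XcExit$, which inserts the indicator $\1_{\{\XcExit<t+h\}}$; and at most one summand $R_t$ is non-zero, at the unique grid point $t^\star:=\XcExitPlus-h$ (and even there it vanishes if $\XcExit\in\unboundedTimeGrid$).

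It then remains to bound $\E[\Psi|R_{t^\star}|^2]\leq Ch^{3/2}$. I would apply Cauchy--Schwarz to obtain $\E[\Psi|R_{t^\star}|^2]\leq\|\Psi\|_{L^2}\|R_{t^\star}\|_{L^4}^2$ and then bound $\|R_{t^\star}\|_{L^4}$ piece by piece. Each Lipschitz factor $\phi\in\{f(\cdot,U,\sigma^{\!\top}\Diff\!U),\ \sigma^{\!\top}\Diff\!U,\ \mu^{\!\top}\Diff^2\!U\sigma,\ \sigma^{\!\top}\Diff^2\!U\sigma,\ \trace[\sigma^{\!\top}\Diff^2\!U\sigma]\}$ is Lipschitz on $\overline\domain$ because $U\in\Fct$ is uniformly bounded in $\HoelderSpace{3}{\gamma}(\R^d)$ and $\mu,\sigma,f$ are smooth under \hyperref[assumption:fbsde]{(FBSDE)}; Proposition~\ref{prop:PropertiesOfStoppedEuler} yields $\|\Xc_{t^\star}-\Xc_{\XcExit}\|_{L^q}\leq C\sqrt h$ on $\{t^\star<\StoppD\}$ for every $q$; and Doob/BDG controls the Brownian increments $\|W_{t^\star+h}-W_{\XcExit}\|_{L^q}$ and the cross term $\|\Delta_1^{\!\top}B(\Xc_{t^\star})\Delta_2\|_{L^q}$ at the same order $\sqrt h$. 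Multiplying the factors yields $\|R_{t^\star}\|_{L^4}\leq Ch$, hence $\|R_{t^\star}\|_{L^4}^2\leq Ch^2\leq Ch^{3/2}$ for $h<1$. The main obstacle is the algebraic bookkeeping in the straddling case: in particular the quadratic $dW\otimes dW$ term produces a cross contribution $\Delta_1^{\!\top}B\Delta_2$ that is not of plain ``Lipschitz-evaluation'' type and must be separately controlled via BDG, and it is precisely this piece that fixes the rate $h^{3/2}$ in the statement.
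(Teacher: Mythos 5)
Your decomposition step is exactly the paper's route (its Lemma~\ref{lemma:generalLossSummandTriangle}): the same case analysis in the three regimes, the same remainder $R$ supported on the straddling step, and the same elementary inequality $(a+b+c)^2\le 4a^2+4b^2+2c^2$ with the indicators $\1_{\{t<\XcExit\}}$, $\1_{\{\XcExit<t+h\}}$. The gap is in how you bound the remainder's contribution. You claim $\|R_{t^\star}\|_{L^4}\le Ch$ by arguing that each Brownian increment over $[t^\star,t^\star+h]$, with $t^\star=\XcExitPlus-h$, has $L^q$-norm of order $\sqrt h$ ``by Doob/BDG'', and similarly $\|\Xc_{t^\star}-\Xc_{\XcExit}\|_{L^q}\le C\sqrt h$ via Proposition~\ref{prop:PropertiesOfStoppedEuler}. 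But $t^\star$ is \emph{not} a stopping time (deciding $\{\XcExitPlus-h\le s\}$ requires knowledge of the path up to time $s+h$), and the straddling interval is selected by the path among an unbounded random number of grid intervals (up to $\StoppD$). For the forward increment $W_{t^\star+h}-W_{\XcExit}$ the strong Markov property at $\XcExit$ does give order $\sqrt h$, but for the backward increment $W_{\XcExit}-W_{t^\star}$ (and hence for $\Xc_{\XcExit}-\Xc_{t^\star}$, which Proposition~\ref{prop:PropertiesOfStoppedEuler} only controls \emph{pathwise} by $C(h+\|W_{\XcExit}-W_{t^\star}\|)$, not in $L^q$) no such argument exists; the relevant quantity is the supremum of increments over the \emph{randomly selected} grid interval, whose moments are strictly larger than the fixed-interval order $h^{q/2}$.

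This is precisely the point where the paper has to invoke the result on moments of local H\"older coefficients over random time intervals (\cite[Corollary 1.3]{Seifried2024HoelderMoments}), after bounding the straddling term by $\sup_{\ell}\sup_{r,s\in[\ell h,(\ell+1)h],\,r<s\le\StoppD}\|W_s-W_r\|^4$ and applying Cauchy--Schwarz in $\Psi$; that step is what degrades the rate from the naive $h^2$ to the stated $h^{3/2}$. A fix along your lines is possible -- e.g.\ condition on the index $\ell$ of the straddling interval, use Cauchy--Schwarz against $\1_{\{\XcExit\in(\ell h,(\ell+1)h)\}}$ (the exit event and the large-increment event are correlated, so you cannot factorize), and sum the square-rooted probabilities using the exponential tail of $\XcExit$ -- but this again costs a factor of order $h^{-1/2}$ and lands you at $Ch^{3/2}$, not $Ch^2$. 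So the final inequality you want is true at the claimed order $h^{3/2}$, but your argument for it is unsound as written, and your attribution of the exponent $3/2$ to the cross term $\Delta_1^{\top}B\Delta_2$ is off: the loss comes from the randomness of the straddling interval, which affects every term containing $W_{\XcExit}-W_{t^\star}$ or $\Xc_{\XcExit}-\Xc_{t^\star}$, not from the quadratic structure of that one term.
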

\begin{proof}
    Observe by definition (see \eqref{defLossPrevPost}) on $ \{\XcExit\notin [t,t+h]\} $ only one of $\lossSPre{t}, \lossSPost{t}$ is non-zero.\footnote{
        Respectively $\{\XcExit\in [t,t+h]\}$ is the event (and thus the only (random) discrete-time step) where both loss types are non-zero; stretching along disjoint subsets of $[t,t+h]$. 
        For further details we refer to Lemma~\ref{lemma:generalLossSummandTriangle}.
    }
    Using Lemma~\ref{lemma:generalLossSummandTriangle} we obtain a random variable $R_t^{\XcExit}$ (see \eqref{def:R_LossTriangle}) that compensates the overlap; 
    \begin{equation}\label{eqn:lossSummand_split_with_overlap_compensation}
    	\lossSummand{t}{\unboundedTimeGrid} (\nn) = \1_{ \{ t < \XcExit \} }\ \lossSPre{t}(\nn) + \1_{ \{ \XcExit < t+h \} }\ \lossSPost{t}(\nn) + \1_{ \{ t < \XcExit < t+h \} }\ R_{t}^{\XcExit}
    \end{equation}
    and moreover there is a constant $C>0$ (not depending on $t\in\unboundedTimeGrid$ nor $h$) such that
    \begin{equation}\label{eqn:lossSummand_overlap_pathwise_bound}
    	|R_{t}^{\XcExit}|^2 \leq C \big( h^2 + \sup_{r,s\in[t,t+h]}\| W_s-W_r \|^{4} \big). 
    \end{equation}
    Next we use \cite[Corollary 1.3]{Seifried2024HoelderMoments}; a result on the moments of the local Hölder coefficient of a process.\footnote{
        The Brownian motion satisfies Kolmogorov's tightness criterion for all $p\geq 1$ with constant ratio $\tfrac{1}{2}$, i.e. 
        \begin{equation}
            \E \big[ \|W_t-W_s\|^p \big] \leq C_p |t-s|^\frac{p}{2} \quad\text{for all } t,s \geq 0
        \end{equation}
        where $C_p>0$ is a constant only depending on $p$.
        With this the prerequisites of \cite[Corollary 1.3]{Seifried2024HoelderMoments} are satisfied such that we can apply the result for the exponent $\vartheta = 8$ and tolerance $\varepsilon = \tfrac{1}{8}$ (using the notation of \cite{Seifried2024HoelderMoments}).
    } 
    Together with the Cauchy-Schwarz inequality this allows us to estimate
    %
    \begin{equation}
    	\begin{aligned}
    		\E \Big[ \Psi\!\! \sum_{t\in\unboundedTimeGrid\cap[0,\StoppD)} \1_{ \{\XcExit\in [t,t+h] \} } \sup_{r,s\in[t,t+h]}\| W_s-W_r \|^{4} \Big] 
    		& \leq \E \Big[ \Psi \sup_{\ell\in\nat_0} \sup_{ \substack{r,s\in[\ell h,(\ell+1)h] \\ r<s\leq\StoppD}} \| W_s-W_r \|^{4} \Big] \\
    		& \leq C \E \Big[ \sup_{\ell\in\nat_0} \sup_{ \substack{r,s\in[\ell h,(\ell+1)h]\\ r<s\leq\StoppD}} \| W_s-W_r \|^{8} \Big]^\frac{1}{2} \\
    		& \leq C h^\frac{3}{2} .
    	\end{aligned}
    \end{equation}
    Combine this with \eqref{eqn:lossSummand_split_with_overlap_compensation}, \eqref{eqn:lossSummand_overlap_pathwise_bound} and an elementary bound to conclude
    \begin{align}
    	\E \Big[ \Psi \sum_{t\in\unboundedTimeGrid\cap[0,\StoppD)} \big| \lossSummand{t}{\unboundedTimeGrid} (\nn) \big|^2 \Big]
        & \leq 4 \E \Big[ \Psi \sum_{t\in\unboundedTimeGrid\cap[0,\StoppD)} \1_{ \{ t < \XcExit \} }\ \big| \lossSPre{t} (\nn) \big|^2 \Big] \\ 
    	& \hspace*{1.0cm} + 4 \E \Big[ \Psi \sum_{t\in\unboundedTimeGrid\cap[0,\StoppD)} \1_{ \{ \XcExit < t+h \} }\  \big| \lossSPost{t} (\nn) \big|^2 \Big] + 2 C h^\frac{3}{2}. \qedhere
    \end{align}    
\end{proof}
We are in position to provide the proof of our main result. The argument is split into two parts. Firstly we derive a general bound in terms of the scaling variable $\Psi$ and the stepsize $h$; presented separately in \eqref{eqn:Proof_general_LossRate_Bound_1}, \eqref{eqn:Proof_general_LossRate_Bound_2} and \eqref{eqn:Proof_general_LossRate_Bound_3} below. Secondly we further estimate those bounds to extract the components which yield the rate of the loss functional. This is split into individual steps.
\begin{proof}[Proof of Theorem~\ref{thm:general_dynamical_LossRate}]
    From Proposition~\ref{prop:Identification_LossSummand_with_PrePost} we know that
	%
	\begin{equation}
		\E \big[ \Psi \loss^{\StoppD} (\nn) \big] 
		\leq 4 \E \Big[ \Psi\!\! \sum_{t\in\unboundedTimeGrid\cap[0,\XcExit)}\! \big| \lossSPre{t} (\nn) \big|^2 \Big]
		+ 4 \E \Big[ \Psi\!\! \sum_{t\in\unboundedTimeGrid\cap[0,\StoppD)}\! \1_{ \{ \XcExit < t+h \} }\ \big| \lossSPost{t} (\nn) \big|^2 \Big] + 2 C h^\frac{3}{2}
	\end{equation}
    Regarding the terms \textit{previous of first exit} $\XcExit$ we can exploit that the exact solution $u$ of the PDE \eqref{eqnPDE} is available and particularly satisfies the PDE. Thus we use an elementary bound to obtain
    \begin{equation}
    	\sum_{t\in\unboundedTimeGrid\cap[0,\XcExit)} \big| \lossSPre{t}(\nn) \big|^2 \leq 2 \sum_{t\in\unboundedTimeGrid\cap[0,\XcExit)} \big| \lossSPre{t}(\nn) - \lossSPre{t}(u) \big|^2 + 2 \sum_{t\in\unboundedTimeGrid\cap[0,\XcExit)} \big| \lossSPre{t}(u) \big|^2.
    \end{equation}
    From Lemma~\ref{lemma:proofTriangulusLossDistance} we obtain an estimate for the accumulated loss distance until the first exit $\XcExit$; 
    \begin{multline}\label{eqn:Proof_general_LossRate_Bound_1}
        \E \Big[ \Psi \sum_{ t\in\unboundedTimeGrid\cap[0,\XcExit) } \big| \lossSPre{t} (\nn)- \lossSPre{t} (u)\big|^2 \Big] 
        \leq C h^{2}\ \E \Big[ \Psi \Big(1+ \big\langle \E_{\circ} [\Psi] \big\rangle_{\XcExit^+}^3 \Big) \XcExit^+ \Big] \\
        + \|\nn-u\|_{\domain}^2\ C \E \bigg[ \Psi \Big( \big(1+ \big\langle \E_{\circ} [\Psi] \big\rangle_{\XcExit^+} \big)  
        + h \big(1+ \big\langle \E_{\circ} [\Psi] \big\rangle_{\XcExit^+}^2 \big)  
        + h \Big) \XcExit^+ \bigg]
    \end{multline}
    Using an integral representation of $\lossSPre{t} (u)$ (see \eqref{eqn:I_II_III_bound_splitt}) together with the bounds of Lemmas~\ref{lemma:proofLossRateExactMartingaleRate}, \ref{lemma:ProofPropLossRateExact} and \ref{lemma:ProofPropLossRateResidue} we obtain a random variable $\GirsanovK{}$ (depending on $\Psi$ and $\XcExitPlus$; see \eqref{Def:Girsanov_K});
	\begin{equation}\label{eqn:Proof_general_LossRate_Bound_2}
		\E \Big[ \Psi \sum_{t\in\unboundedTimeGrid\cap[0,\XcExit)} \big| \lossSPre{t} (u) \big|^2\Big] \leq C h^2 \bigg( h +  \E \big[ \GirsanovK{} \XcExitPlus \big] 
        + \E \Big[ \Psi \big(1 + \big\langle \E_{\circ}^{\prob} [ \Psi ] \big\rangle_{\XcExit^+} \big) \XcExitPlus\Big] 
        \bigg) 
	\end{equation}
    On the other hand regarding the \textit{posterior} loss sum we combine Lemma~\ref{lemma:proofTriangulusMartingaleBound} and Lemma~\ref{lemma:proofTriangulusMartingaleRate} to obtain a random variable $\GirsanovG{2}$ (depending on $\Psi$ and $\StoppD$; see \eqref{Def:Girsanov_G}) such that
    \begin{multline}\label{eqn:Proof_general_LossRate_Bound_3}
        \E \Big[ \Psi \sum_{t\in\unboundedTimeGrid\cap[0,\StoppD)} |\lossSPost{t} (\nn)|^2 \Big] \leq C \E [\Psi^2]^\frac{1}{2} \big( h^2 + h^3 \big) + C h\ \E \Big[ \GirsanovG{} (\StoppD - \XcExit^+) \Big] \\
        + C h \E \bigg[ \Psi \Big( h^2 + h + 1 + h^2\big\langle \E_{\circ} [\Psi] \big\rangle_{\StoppD} + h\big\langle \E_{\circ} [\Psi] \big\rangle_{\StoppD} + \big\langle \E_{\circ} [\Psi] \big\rangle_{\StoppD}^2 \Big) (\StoppD - \XcExit^+ ) \bigg]
    \end{multline}

    \step{1} We show 
    \begin{equation}
        \E \Big[ \Psi \sum_{ t\in\unboundedTimeGrid\cap[0,\XcExit) } \big| \lossSPre{t} (\nn)- \lossSPre{t} (u)\big|^2 \Big] \leq C h^2 + C \|\nn-u\|_{\domain}^2
    \end{equation}
    
    With the estimate \eqref{eqn:Proof_general_LossRate_Bound_1} we only have to verify that both expectations are finite. From Theorem~\ref{thm:PreliminaryResultsBGG} we know $\StoppD$ has a positive exponential moment and thus also $\XcExit$. Using Lemma~\ref{lemma:IntegrabilityQV_PsiGenericStopp}\footnote{
    	Applied to $\genericStopp = \XcExitPlus$, $V = \Psi$ and $p=3$.
    } 
    we obtain 
    $ \E [ \Psi (1+ \langle \E_{\circ} [\Psi] \rangle_{\XcExit^+}^3 ) \XcExit^+ ] < \infty $.
    Moreover with the Cauchy-Schwarz inequality and Lemma~\ref{lemma:IntegrabilityQV_PsiGenericStopp}\footnote{
        Applied (twice) for $p=1$ as well as $p=2$ with $\genericStopp = \XcExitPlus$ and $V = \Psi$.
    } 
    we compute
    \begin{align}
        & \E \bigg[ \Psi \Big( \big(1+ \big\langle \E_{\circ} [\Psi] \big\rangle_{\XcExit^+} \big)  
        + h \big(1+ \big\langle \E_{\circ} [\Psi] \big\rangle_{\XcExit^+}^2 \big)  
        + h \Big) \XcExit^+ \bigg] \\
        & \hspace*{1.0cm} = \E \Big[ \Psi \big(1+ \big\langle \E_{\circ} [\Psi] \big\rangle_{\XcExit^+} \big) \XcExitPlus \Big] + h \E \Big[ \Psi \big(1+ \big\langle \E_{\circ} [\Psi] \big\rangle_{\XcExit^+}^2 \big) \XcExitPlus \Big] + h \E \big[\Psi \XcExitPlus] \\
        & \hspace*{1.0cm} \leq C + C h + h \E \big[ \Psi^2 \big]^\frac{1}{2} \E \big[ {\XcExitPlus}^2 \big]^\frac{1}{2} < \infty \closeEqn
    \end{align}

    \step{2} We show
    \begin{equation}
        \E \Big[ \Psi \sum_{t\in\unboundedTimeGrid\cap[0,\XcExit)} \big| \lossSPre{t} (u) \big|^2\Big] \leq C h^2
    \end{equation}

    With the estimate \eqref{eqn:Proof_general_LossRate_Bound_2} we verify that the expectations in the bound are finite. As in the previous step with Theorem~\ref{thm:PreliminaryResultsBGG} we are allowed to invoke Lemma~\ref{lemma:IntegrabilityQV_PsiGenericStopp} and Lemma~\ref{lemma:IntegrabilityQV_GirsanovGG}. Firstly\footnote{
        Apply Lemma~\ref{lemma:IntegrabilityQV_PsiGenericStopp} to $\genericStopp = \XcExitPlus$, $V = \Psi$ and $p=1$.
    }
    we have
    $ \E [ (1 + \langle \E_{\circ}^{\prob} [ \Psi ] \rangle_{\XcExit^+} ) \XcExitPlus ] < \infty $.
    Secondly we recall the definition of $\GirsanovK{}$ (see \eqref{Def:Girsanov_K}) and apply Lemma~\ref{lemma:IntegrabilityQV_PsiGenericStopp} (as above) together with the second part of Lemma~\ref{lemma:IntegrabilityQV_GirsanovGG} to obtain\footnote{
        Apply Lemma~\ref{lemma:IntegrabilityQV_GirsanovGG} to $\genericStopp = \XcExitPlus$, $V = \Psi$ resp. $U = \Psi \big(1 + \big\langle \E_\circ^\prob [\Psi \big] \big\rangle_{\XcExitPlus} \big)$.
    }
    \begin{multline}
        \E \big[ \GirsanovK{} \XcExitPlus \big] = 
        \E \Big[ \Psi \big(1 + \big\langle \E_\circ^\prob [\Psi \big] \big\rangle_{\XcExitPlus} \big) \XcExitPlus \Big] \\
        + \E \bigg[ 
            \Psi \big(1 + \big\langle \E_\circ^\prob [\Psi \big] \big\rangle_{\XcExitPlus} \big) \Big\langle \E_{\circ} \big[ \Psi \big(1 + \big\langle \E_\circ^\prob [ \Psi \big] \big\rangle_{\XcExitPlus} \big) \big] \Big\rangle_{\XcExitPlus}^2 \XcExitPlus
        \bigg] < \infty \quad \closeEqn
    \end{multline}

    \step{3} We show
    \begin{equation}
        \E \Big[ \Psi \sum_{t\in\unboundedTimeGrid\cap[0,\StoppD)} |\lossSPost{t} (\nn)|^2 \Big] \leq C h^2 + C h \E \big[ (\StoppD - \XcExitPlus)^2 \big]^\frac{1}{2}
    \end{equation}
    Regarding \eqref{eqn:Proof_general_LossRate_Bound_3} we apply the Cauchy-Schwarz inequality to extract the $L^2$-norm of $\StoppD - \XcExit$ and then check that the resulting expectations are finite. For ease of presentation we omit the bound obtained from Cauchy-Schwarz and directly proceed with the individual components. 
    Recall the definition of $\GirsanovG{}$ (see \eqref{Def:Girsanov_G}) and use the first part of Lemma~\ref{lemma:IntegrabilityQV_GirsanovGG}\footnote{
        Applied to $\genericStopp = \StoppD$ and $V = \Psi$ resp. $U = \Psi \big(1 + \big\langle \E_\circ^\prob [\Psi \big] \big\rangle_{\StoppD} \big)$.
    } to obtain
    $ \E [ \GirsanovG{2} ] < \infty $. 
    Finally with Lemma~\ref{lemma:IntegrabilityQV_ClosedMartingale}\footnote{
        Applied (twice) for $p=8$ as wells as $p=16$ with $\genericStopp = \StoppD$ and $V = \Psi$.
    }
    and using elementary bounds we compute
   	\begin{align}
   		& \E \bigg[ \Psi^2 \Big( h^2 + h + 1 + h^2\big\langle \E_{\circ} [\Psi] \big\rangle_{\StoppD} + h\big\langle \E_{\circ} [\Psi] \big\rangle_{\StoppD} + \big\langle \E_{\circ} [\Psi] \big\rangle_{\StoppD}^2 \Big)^2 \bigg] \\
   		& \hspace*{1.0cm} \leq \tfrac{1}{2} \E [ \Psi^4 ] + \tfrac{1}{2} \E \bigg[ \Big( (h^2 + h + 1) + (h^2+h) \big\langle \E_{\circ} [\Psi] \big\rangle_{\StoppD} + \big\langle \E_{\circ} [\Psi] \big\rangle_{\StoppD}^2 \Big)^4 \bigg] \\
   		& \hspace*{1.0cm} \leq \tfrac{1}{2} \E [ \Psi^4 ] + 2^5 \bigg( 
   		(h^2+h+1)^4 + (h^2+h)^4 \E \Big[ \big\langle \E_{\circ} [\Psi] \big\rangle_{\StoppD}^4 \Big] + \E \Big[ \big\langle \E_{\circ} [\Psi] \big\rangle_{\StoppD}^{8} \Big] 
   		\bigg) < \infty.\ \ \qedhere
   	\end{align}
\end{proof}
We conclude with a consequence of our main result Theorem~\ref{thm:general_dynamical_LossRate} and the properties of the Euler-Maruyama scheme from \cite{BGG_forward_exit, Seifried2025bsdeDiscretization}; see Theorem~\ref{thm:PreliminaryResultsBGG} and Corollary~\ref{corollary:BSDE_Discretization_EM_Stopp}. We obtain bounds which only depend on $h$ and a universal approximation distance.\footnote{
    From Theorem~\ref{thm:general_LossRate} a bound of $\mathfrak{L}^\Psi$ (see \eqref{defLossFunctional_PSI}) is immediate. Our presentation is intended to offer additional insight.
}
\begin{theorem}[General Loss Rate]\label{thm:general_LossRate}
    Let $\Psi$ denote a random variable that is bounded away from zero and satisfies $\Psi \in L^{p}$ for some $p\geq 32$. 
    Assume \hyperref[assumption:domain]{\emph{(Dom)}} and \hyperref[assumption:fbsde]{(FBSDE)} hold for the same $\gamma \in(0,1]$ and assume \hyperref[assumption:exp_bsde]{(e-$1$)}, \hyperref[assumption:exp_EM]{(e-$2$)} and \hyperref[assumption:exp_EM_bound]{(e-$3$)} are satisfied.
    Then there are $C_1, C_2 >0$ (not depending on $h$); 
    \begin{equation}
        \E \big[ \Psi \lossBdry^{\StoppD} (\nn) \big] \leq C_1 h^\frac{1}{8} 
        \quad\text{and}\quad
        \E \big[\Psi \loss^{\bar\tau} (\nn) \big] \leq C_2 h^{1+\frac{1}{4}} + C_2 {\|U-u\|}_\domain^2 
        \quad\text{for all } h\leq h_0 \closeEqn
    \end{equation}
\end{theorem}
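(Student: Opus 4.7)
The plan is to derive the two bounds separately, combining the dynamical loss estimate from Theorem~\ref{thm:general_dynamical_LossRate} with the stopping-time and position estimates collected in Theorem~\ref{thm:PreliminaryResultsBGG} and Corollary~\ref{corollary:BSDE_Discretization_EM_Stopp}.

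\textbf{Dynamical loss.} First I invoke Theorem~\ref{thm:general_dynamical_LossRate}, obtaining
\[
\E\bigl[\Psi\,\loss^{\StoppD}(U)\bigr]\leq C h^{3/2}+C h\,\E\bigl[(\StoppD-\XcExitPlus)^2\bigr]^{1/2}+C\|U-u\|_\domain^2.
\]
The only remaining task is to show $\E[(\StoppD-\XcExitPlus)^2]\leq C h^{1/2}$, since this yields $C h^{3/2}+C h\cdot Ch^{1/4}+C\|U-u\|_\domain^2\leq C_2 h^{1+1/4}+C_2\|U-u\|_\domain^2$ for $h\leq 1$. Since $\XcExit\leq\StoppD$ and $\XcExit\leq\XcExitPlus\leq\XcExit+h$ with $\XcExitPlus\in\unboundedTimeGrid$, I have $0\leq\StoppD-\XcExitPlus\leq\StoppD-\XcExit\leq|\StoppD-\tau|+|\tau-\XcExit|$. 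The first summand is controlled directly by Theorem~\ref{thm:PreliminaryResultsBGG}, while the second is handled by the same line of argument from~\cite{BGG_forward_exit} applied to the continuous Euler-Maruyama interpolation $\Xc$, whose required exponential integrability is already recorded in the footnote preceding Proposition~\ref{prop:PropertiesOfStoppedEuler}.

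\textbf{Boundary loss.} The Dirichlet boundary condition $u=g$ on $\partial\domain$ together with $X_\tau\in\partial\domain$ gives $g(X_\tau)=u(X_\tau)$. Inserting $U(X_\tau)$ and $u(X_\tau)$ by hand,
\[
U(\X_{\StoppD})-g(\X_{\StoppD})=\bigl[U(\X_{\StoppD})-U(X_\tau)\bigr]+\bigl[U(X_\tau)-u(X_\tau)\bigr]+\bigl[g(X_\tau)-g(\X_{\StoppD})\bigr].
\]
The uniform Lipschitz bounds on $U\in\Fct$, on $u$, and on $g$ (available from the boundedness of $\Fct$ in $\HoelderSpace{3}{\gamma}(\R^d)$ and assumption~\hyperref[assumption:fbsde]{(FBSDE)}) control the first and third increments by $C\|\X_{\StoppD}-X_\tau\|$, while the middle one is dominated by $\|U-u\|_\domain$. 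Squaring, integrating against $\Psi$ and applying Cauchy-Schwarz yields
\[
\E\bigl[\Psi|U(\X_{\StoppD})-g(\X_{\StoppD})|^2\bigr]\leq C\,\E[\Psi^2]^{1/2}\,\E\bigl[\|\X_{\StoppD}-X_\tau\|^4\bigr]^{1/2}+C\|U-u\|_\domain^2\E[\Psi].
\]
Corollary~\ref{corollary:BSDE_Discretization_EM_Stopp} combined with $\E[|\tau-\StoppD|^p]\leq c_p h^{1/2}$ gives $\E[\|\X_{\StoppD}-X_\tau\|^4]\leq C h^{1/4}$, hence $\E[\|\X_{\StoppD}-X_\tau\|^4]^{1/2}\leq C h^{1/8}$; the $\|U-u\|_\domain^2$ contribution, being independent of $h$, is absorbed into the constant $C_1$.

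\textbf{Main obstacle.} The principal difficulty is the estimate $\E[(\StoppD-\XcExitPlus)^2]\leq Ch^{1/2}$: Theorem~\ref{thm:PreliminaryResultsBGG} as stated only controls $|\tau-\StoppD|$, so transferring the same rate to $|\tau-\XcExit|$ requires reapplying the machinery of~\cite{BGG_forward_exit} to the continuous interpolation. The Cauchy-Schwarz step is also where the $h^{1/4}$ rate of Theorem~\ref{thm:special_LossRate} (attainable when $\Psi=1$ permits estimating $\E[\|\X_{\StoppD}-X_\tau\|^2]$ directly via Theorem~\ref{thm:PreliminaryResultsBGG}) is degraded to $h^{1/8}$ once $\Psi$ forces passage through the fourth moment.
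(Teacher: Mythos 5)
Your handling of the dynamical term is essentially the paper's argument: invoke Theorem~\ref{thm:general_dynamical_LossRate} and reduce everything to $\E[(\StoppD-\XcExitPlus)^2]^{1/2}\leq C h^{1/4}$. What you flag as the ``main obstacle'' is, however, resolved in the paper by a direct citation rather than by re-running the machinery of \cite{BGG_forward_exit}: since $\X$ is (trivially) the Euler--Maruyama scheme of the interpolation $\Xc$, the second part of Theorem~\ref{thm:PreliminaryResultsBGG} applies to the pair $(\Xc,\X)$ and gives $\E[(\XcExit-\StoppD)^2]^{1/2}\leq \widetilde c_2\, h^{1/4}$, the required tail condition for $\XcExit$ coming from \hyperref[assumption:exp_EM_bound]{(e-$3$)} (because $\XcExit\leq\StoppD$), not merely from the exponential integrability noted in the footnote. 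Your detour through $\tau$ via $\StoppD-\XcExit\leq|\StoppD-\tau|+|\tau-\XcExit|$ is both unnecessary and slightly mis-aimed, since the exit-time estimates of \cite{BGG_forward_exit} compare a continuous exit with the discrete exit of the \emph{same} scheme, not the exits of two different processes; but the quantity you actually need, $\E[(\StoppD-\XcExit)^2]$, is exactly what the cited theorem delivers, so this part of your proposal is sound in substance.

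The boundary estimate, as written, does not prove the stated bound, and this is a genuine gap. By inserting $\nn(X_\tau)$ you create the term $|\nn(X_\tau)-u(X_\tau)|\leq{\|U-u\|}_\domain$, and your final inequality reads $\E[\Psi\,\lossBdry^{\StoppD}(\nn)]\leq C h^{1/8}+C\,{\|U-u\|}_\domain^2$. The claim that the $h$-independent quantity ${\|U-u\|}_\domain^2$ can be ``absorbed into the constant $C_1$'' is invalid: the theorem asserts a bound $C_1 h^{1/8}$ with $C_1$ not depending on $h$, and no such $C_1$ dominates a fixed positive constant for all $h\leq h_0$. The paper's Step~1 never introduces $\nn(X_\tau)$: it splits only at $u(X_\tau)=g(X_\tau)$ and bounds $|\nn(\X_{\StoppD})-u(X_\tau)|$ and $|g(X_\tau)-g(\X_{\StoppD})|$ by $(\lipschitz{\nn}+\lipschitz{g})\,\|\X_{\StoppD}-X_\tau\|$, so that after Cauchy--Schwarz only the fourth moment of $\|\X_{\StoppD}-X_\tau\|$ enters, estimated through Corollary~\ref{corollary:BSDE_Discretization_EM_Stopp} and Theorem~\ref{thm:PreliminaryResultsBGG} exactly as in your bookkeeping (which is correct: $\E[\|\X_{\StoppD}-X_\tau\|^4]\leq C h^{1/4}$, hence $h^{1/8}$ after the square root). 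To obtain the theorem as stated you must either reproduce that direct comparison or retain the ${\|U-u\|}_\domain^2$ contribution explicitly in the boundary bound; with the absorption step your argument establishes a weaker statement than the one claimed.
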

\begin{proof}
    Individually we derive the rate of the boundary resp. dynamical penalization term.
	
	\step{1} We show there is $C_1 >0$ such that 
    $ \E [ \Psi \lossBdry^{\StoppD} (\nn) ] \leq C_1 h^\frac{1}{8} $.
    
    The Lipschitz continuity\footnote{
        By assumption $\nn,g\in\HoelderSpace{3}{\gamma} (\overline{\domain})$, thus without restriction we may assume that both maps are globally Lipschitz. Otherwise one merely defines them on $\overline{\domain}$ and invokes the classical extension lemma, see e.g. \cite[Lemma 2.20]{LeeSmoothManifolds}.
    } of $\nn, g$ and the fact that $u$ solves \eqref{eqnPDE} (thus $u(X_\tau)=g(X_\tau)$) yield
    \begin{equation}
        \lossBdry^{\StoppD} (\nn) 
        \leq 2 \big| \nn (\X_{\StoppD}) - u(X_{\tau}) \big|^2 + 2 \big| g(X_{\tau}) - g(\X_{\StoppD}) \big|^2
        \leq 2 (\lipschitz{\nn}^2 + \lipschitz{g}^2) \|\Xc_{\StoppD}-X_{\tau}\|^2
    \end{equation}
    Using Cauchy-Schwarz, Corollary~\ref{corollary:BSDE_Discretization_EM_Stopp} and the subadditivity of the square-root there is $K_1 >0$;
    \begin{equation}
        E \big[ \Psi \lossBdry^{\StoppD} (\nn) \big] 
        \leq \E \big[ \Psi^2 \big]^{\frac{1}{2}}  \E \big[ \lossBdry^{\StoppD} (\nn)^2 \big]^\frac{1}{2}
        \leq K_1 h + K_1 \E \big[ |\tau - \StoppD|^6 \big]^\frac{1}{4} + K_1 \E \big[ |\tau - \StoppD|^2 \big]^\frac{1}{4}  
    \end{equation} 
    and with Theorem~\ref{thm:PreliminaryResultsBGG} we know there are $c_2, c_6 > 0$ such that
    \begin{equation}
        \E \big[ |\tau - \StoppD|^6 \big]^\frac{1}{4} + \E \big[ |\tau - \StoppD|^2 \big]^\frac{1}{4}  \leq c_6 h^\frac{1}{8} + c_2 h^\frac{1}{8} \closeEqn
    \end{equation}

    \step{2} We show there is $C_2 >0$ such that 
    $ \E [\Psi \loss^{\bar\tau} (\nn) ] \leq C_2 h h^\frac{1}{4} + C_2 {\|U-u\|}_\domain^2 $.

    From Theorem~\ref{thm:general_dynamical_LossRate} we obtain a constant $K_2>0$ such that
    \begin{equation}
        \E \big[\Psi \loss^{\bar\tau} (\nn) \big] \leq K_2 h^\frac{3}{2} + K_2 h \E \big[ (\StoppD - \XcExitPlus)^2 \big]^\frac{1}{2} + K_2 {\|U-u\|}_\domain^2
    \end{equation}
    Recall that by definition we have $\XcExit\leq\XcExitPlus\leq\StoppD$ and thus $0\leq\StoppD - \XcExitPlus \leq \StoppD - \XcExit$. Moreover $\X$ is also the Euler-Maruyama approximation of $\Xc$ and since \hyperref[assumption:exp_EM_bound]{(e-$3$)} is satisfied (for $\tau\vee\StoppD$) we particularly know that $\XcExit$ fulfills \hyperref[assumption:exp_EM]{(e-$2$)}. We apply Theorem~\ref{thm:PreliminaryResultsBGG} to obtain $\widetilde{c}_2 > 0 $ such that
    \[
        \E \big[ (\StoppD - \XcExitPlus)^2 \big]^\frac{1}{2} \leq \E \big[ (\XcExit - \StoppD)^2 \big]^\frac{1}{2} \leq \widetilde{c}_2 h^\frac{1}{4} . \qedhere
    \]
\end{proof}

\section{Auxiliary Results for Theorem~\ref{thm:general_dynamical_LossRate}}\label{sec:AuxResults}

Subsequently we provide the preparatory results required in Section~\ref{sec:GeneralLossRate}.\footnote{
    The results in this section are postulated under minimal assumptions on $\Psi$; particularly in general the bounds do not necessarily need to be finite. We refer to Section~\ref{sec:GeneralLossRate} for sufficient regularity conditions on $\Psi$.
}
Throughout this section we assume \hyperref[assumption:domain]{\emph{(Dom)}} and \hyperref[assumption:fbsde]{(FBSDE)} are satisfied with the same $\gamma\in(0,1]$ and assume there is $\rho>0$ such that \hyperref[assumption:exp_bsde]{(e-$1$)} holds. Particularly without further restriction we consider $h\in(0,1)$.
\begin{lemma}\label{lemma:proofTriangulusLossDistance}
    Let $\Psi \in L^1$ be bounded away from zero. There is $C>0$ (not depending on $h$);
    \begin{multline}
        \E \Big[ \Psi \sum_{ t\in\unboundedTimeGrid\cap[0,\XcExit) } \big| \lossSPre{t} (\nn)- \lossSPre{t} (u)\big|^2 \Big] 
        \leq C h^{2}\ \E \Big[ \Psi \big(1+ \big\langle \E_{\circ} [\Psi] \big\rangle_{\XcExitPlus}^3 \big) \XcExitPlus \Big] \\
        + C \|\nn-u\|_{\domain}^2\ \E \Big[ \Psi \Big( \big(1+ \big\langle \E_{\circ} [\Psi] \big\rangle_{\XcExitPlus} \big)  
        + h \big(1+ \big\langle \E_{\circ} [\Psi] \big\rangle_{\XcExitPlus}^2 \big)  
        + h \Big) \XcExitPlus \Big]
    \end{multline}
    where $\XcExitPlus$ denotes the first time on the grid $\unboundedTimeGrid$ after $\XcExit$, i.e.
    $ \XcExitPlus \defined \min \big\{ t\in\unboundedTimeGrid \,|\, \XcExit \leq t \big\} $. \close
\end{lemma}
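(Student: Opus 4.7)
The plan is to pathwise-bound each $|\lossSPre{t}(\nn)-\lossSPre{t}(u)|^{2}$ by $\|\nn-u\|_\domain^{2}$ times increments/powers of $h$ and of the stopped Brownian motion, and then carry out the weighted summation against $\Psi$ by passing through the Doob martingale of $\Psi$ so as to produce the quadratic-variation terms in the statement. Setting $\Delta\defined\nn-u$ and noting that $\mathcal{L}_{s_{1},s_{2}}$ from \eqref{defLossSummand_GeneralVersion} is linear in its first argument except through $f$, the $f$-contribution to $\lossSPre{t}(\nn)-\lossSPre{t}(u)$ is controlled by the Lipschitz property of $f$ in $(y,z)$, giving $\lipschitz{f}(|\Delta(\Xc_{s_{1}})|+\supremum{\sigma}\|\Diff\Delta(\Xc_{s_{1}})\|)(s_{2}-s_{1})\leq C\|\nn-u\|_\domain\,h$. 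The increment $\Delta(\Xc_{s_{2}})-\Delta(\Xc_{s_{1}})$ is handled by the mean value theorem together with Proposition~\ref{prop:PropertiesOfStoppedEuler}, and the remaining terms — the Brownian contraction, the mixed $h\cdot(W_{s_{2}}-W_{s_{1}})$ term, the quadratic $W$-form and the trace correction — all carry factors $\Diff\Delta(\Xc_{s_{1}})$ or $\Diff^{2}\Delta(\Xc_{s_{1}})$ dominated by $\|\nn-u\|_\domain$. With $\Delta W_{t}\defined W_{(t+h)\wedge\XcExit}-W_{t\wedge\XcExit}$, this leads to a pathwise inequality of the form
\begin{equation*}
\bigl|\lossSPre{t}(\nn)-\lossSPre{t}(u)\bigr|^{2}
\leq C\,\|\nn-u\|_\domain^{2}\,\bigl(h^{2}+\|\Delta W_{t}\|^{2}+h^{2}\|\Delta W_{t}\|^{2}+\|\Delta W_{t}\|^{4}\bigr).
\end{equation*}

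Summing over $t\in\unboundedTimeGrid\cap[0,\XcExit)$ and taking $\E[\Psi\,\cdot\,]$ reduces the claim to four $\Psi$-weighted sums. The pure-$h^{2}$ piece immediately yields a term of order $h\,\E[\Psi\,\XcExitPlus]$ since the number of grid points up to $\XcExit$ is at most $\XcExitPlus/h$. For the Brownian-power pieces the key technical point is that $\Psi$ is generally not $\F_{\XcExit}$-measurable, so it cannot simply be pulled inside the conditional sums; instead I condition through the Doob martingale $M_{t}\defined\E_{t}[\Psi]$ and perform an It\^o / integration-by-parts expansion of each product $M_{\XcExit}\sum_{t}\|\Delta W_{t}\|^{2k}$. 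Each expansion produces a main term of order $h^{k}\,\E[\Psi\,\XcExitPlus]$ together with correction terms involving $\langle M,W\rangle$ — the compact object denoted $\langle\E_{\circ}[\Psi]\rangle$. The linear sum $\sum_{t}\|\Delta W_{t}\|^{2}$ should contribute the factor $(1+\langle\E_{\circ}[\Psi]\rangle_{\XcExitPlus})$, the mixed $\sum_{t}h^{2}\|\Delta W_{t}\|^{2}$ the factor $h(1+\langle\E_{\circ}[\Psi]\rangle_{\XcExitPlus}^{2})$, and the quartic sum $\sum_{t}\|\Delta W_{t}\|^{4}$ the factor $h^{2}(1+\langle\E_{\circ}[\Psi]\rangle_{\XcExitPlus}^{3})$; regrouping by power of $h$ then reproduces the stated bound.

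The delicate part — and the main obstacle — is precisely this last step: a naive Cauchy--Schwarz pass would produce factors $\E[\Psi^{p}]^{1/p}$ rather than the sharp $\E[\Psi\,\langle\E_{\circ}[\Psi]\rangle_{\XcExitPlus}^{k}\XcExitPlus]$ expressions that appear in the statement. Extracting exactly the stated quadratic-variation powers (with the correct exponents $1,2,3$) requires an It\^o / product-rule computation on $M$ tailored to the stopped-sum structure, and is presumably the content of the integrability identities collected in Lemmas~\ref{lemma:IntegrabilityQV_PsiGenericStopp} and \ref{lemma:IntegrabilityQV_GirsanovGG} that the subsequent proofs explicitly invoke.
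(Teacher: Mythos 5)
Your overall architecture (pathwise bound on $\lossSPre{t}(\nn)-\lossSPre{t}(u)$, then a $\Psi$-weighted summation producing the $\big\langle \E_{\circ} [\Psi] \big\rangle_{\XcExitPlus}$ factors) matches the paper, but the step you yourself identify as ``the delicate part'' is exactly where the proposal has a genuine gap. The paper obtains the weighted bounds from Lemma~\ref{lemma:Girsanov} (a change of measure $\de\Q\propto\Psi\,\de\prob$, Kunita--Watanabe to control the drift correction by $\ell^{-1}\big\langle \E_{\circ} [\Psi] \big\rangle^{1/2}$, and the discrete Wald identity of Lemma~\ref{lemma:WaldBound}), giving $\E\big[\Psi\sum_{t\in\unboundedTimeGrid\cap[0,\XcExit)}\sup_{s\in[t,t+h]}\|W_s-W_t\|^q\big]\leq C\,h^{q/2-1}\,\E\big[\Psi\big(1+\big\langle \E_{\circ} [\Psi] \big\rangle_{\XcExitPlus}^{q/2}\big)\XcExitPlus\big]$. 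Your proposal replaces this by an unexecuted ``It\^o / integration-by-parts expansion'' of $M_{\XcExit}\sum_t\|\Delta W_t\|^{2k}$ with $M=\E_\circ[\Psi]$, and you attribute the mechanism to Lemmas~\ref{lemma:IntegrabilityQV_PsiGenericStopp} and \ref{lemma:IntegrabilityQV_GirsanovGG}, which are pure integrability statements and do not produce these factors. Moreover your exponent bookkeeping is wrong: with any correct version of the estimate, $\sum_t\|\Delta W_t\|^4$ yields $h\big(1+\big\langle \E_{\circ} [\Psi]\big\rangle^{2}\big)$, not $h^2\big(1+\big\langle \E_{\circ} [\Psi]\big\rangle^{3}\big)$ (the latter requires a sixth power, $q=6$), and $h^2\sum_t\|\Delta W_t\|^2$ yields $h^2\big(1+\big\langle \E_{\circ} [\Psi]\big\rangle\big)$, not $h\big(1+\big\langle \E_{\circ} [\Psi]\big\rangle^{2}\big)$; the ``main term of order $h^k\E[\Psi\XcExitPlus]$'' should also be $h^{k-1}$. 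So the claim that ``regrouping by power of $h$ reproduces the stated bound'' is not substantiated: the central estimate is neither proved nor correctly stated.

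On the pathwise part your route genuinely differs from the paper's, and with correct bookkeeping it would even give a slightly stronger bound: exploiting that $\mathcal{L}_{s_1,s_2}$ is linear in $(\nn,\Diff\nn,\Diff^2\nn)$ except through $f$, every term carries $\|\nn-u\|_\domain$, so no term free of $\|\nn-u\|_\domain^2$ (and hence no $\|W\|^6$ sum) would be needed. Two caveats, though. First, your mean-value step for $\Delta(\Xc_{(t+h)\wedge\XcExit})-\Delta(\Xc_{t\wedge\XcExit})$ requires controlling $\Diff\Delta$ on the segment joining two points of $\overline{\domain}$, which may leave $\overline{\domain}$ when the domain is not convex, while $\|\nn-u\|_\domain$ only controls derivatives on $\overline{\domain}$; one needs quasiconvexity of $\overline{\domain}$ (available here since $\partial\domain$ is $\HoelderSpace{3}{\gamma}$) or a bounded extension argument, and this should be said. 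This is precisely what the paper's decomposition into $\mathcal{U}_t,\mathcal{V}_t,\mathcal{W}_t$ avoids: it Taylor-expands $\nn$ and $u$ separately via \eqref{eqnTaylorSecondSpace}, evaluating all difference terms at the single point $\Xc_{t\wedge\XcExit}$, at the price of a cubic remainder $C\|\Xc_{(t+h)\wedge\XcExit}-\Xc_{t\wedge\XcExit}\|^3$ not proportional to $\|\nn-u\|_\domain$ -- which, via Lemma~\ref{lemma:Girsanov} with $q=6$, is the origin of the $C h^2\,\E[\Psi(1+\langle \E_{\circ} [\Psi]\rangle_{\XcExitPlus}^3)\XcExitPlus]$ term in the statement. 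Second, even granting your sharper pathwise bound, the conclusion still rests entirely on the weighted-sum estimate discussed above, so the gap there is decisive.
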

\begin{proof}
	Fix $t\in\unboundedTimeGrid$. On the event\footnote{
		We could invoke a sufficiently smooth extension of $u$ and avoid to restrict on $\{t < \XcExit\}$.
		However in Lemmas~\ref{lemma:ProofPropLossRateExact} and \ref{lemma:proofLossRateExactMartingaleRate} below we use that $u$ solves \eqref{eqnPDE} and thus are forced to consider trajectories of $\Xc$ up to the first exit. 
	} 
	$\{t < \XcExit\}$ we successively introduce random variables $\mathcal{U}_t$, $\mathcal{V}_t$, $\mathcal{W}_t$ such that
	$ \lossSPre{t}(\nn) - \lossSPre{t}(u) = \mathcal{U}_t + \mathcal{V}_t + \mathcal{W}_t $.
	The first one;
	\begin{align*}
		\mathcal{U}_t \defined \nn (\Xc_{(t+h)\wedge\XcExit}) - \nn (\Xc_{t\wedge\XcExit}) - \big( u (\Xc_{(t+h)\wedge\XcExit}) - u (\Xc_{t\wedge\XcExit}) \big)
	\end{align*}
	can be expanded and estimated as follows
    \begin{equation}
        \begin{aligned}
            |\mathcal{U}_t| \leq & \; \big| \nn (\Xc_{(t+h)\wedge\XcExit}) - \nn (\Xc_{t\wedge\XcExit}) - \Diff\!\nn (\Xc_{t\wedge\XcExit})^\top (\Xc_{(t+h)\wedge\XcExit} - \Xc_{t\wedge\XcExit}) \\
    		&\hspace*{3.5cm} - \tfrac{1}{2} (\Xc_{(t+h)\wedge\XcExit} - \Xc_{t\wedge\XcExit})^\top \Diff^2\!\nn (\Xc_{t\wedge\XcExit}) (\Xc_{(t+h)\wedge\XcExit} - \Xc_{t\wedge\XcExit}) \big| \\
    		&\hspace*{0.5cm} + \big| u (\Xc_{(t+h)\wedge\XcExit}) - u(\Xc_{t\wedge\XcExit}) - \Diff\! u (\Xc_{t\wedge\XcExit})^\top (\Xc_{(t+h)\wedge\XcExit} - \Xc_{t\wedge\XcExit}) \\
    		&\hspace*{3.5cm} - \tfrac{1}{2} (\Xc_{(t+h)\wedge\XcExit} - \Xc_{t\wedge\XcExit})^\top \Diff^2\! u (\Xc_{t\wedge\XcExit}) (\Xc_{(t+h)\wedge\XcExit} - \Xc_{t\wedge\XcExit}) \big| \\
    		&\hspace*{1.0cm} + \Big|\big(\Diff\! u (\Xc_{t\wedge\XcExit}) - \Diff\! \nn (\Xc_{t\wedge\XcExit}) \big)^\top (\Xc_{(t+h)\wedge\XcExit} - \Xc_{t\wedge\XcExit}) \Big| \\
    		&\hspace*{1.5cm} + \tfrac{1}{2} \big| (\Xc_{(t+h)\wedge\XcExit} - \Xc_{t\wedge\XcExit})^\top \Diff^2\! u (\Xc_{t\wedge\XcExit}) (\Xc_{(t+h)\wedge\XcExit} - \Xc_{t\wedge\XcExit}) \big|   
        \end{aligned}
    \end{equation}
	Using the Taylor estimate \eqref{eqnTaylorSecondSpace} for the first two summands and a standard bound\footnote{ 
        For bounded linear $T$ and bounded bilinear $S$, we have $\|Tx\|\leq\|T\|\|x\|$ and $S(x,y) \leq \|S\|\|x\|\|y\|$.
    } 
    yields
	\begin{align}\label{eqnProofLossRateBoundU}
		|\mathcal{U}_t| & \leq C \| \Xc_{(t+h)\wedge\XcExit} - \Xc_{t\wedge\XcExit} \|^3 
		+ \sup_{y\in \bar\domain} \big\|\Diff\!\nn(y) - \Diff\! u(y)\big\|\ \|\Xc_{(t+h)\wedge\XcExit} - \Xc_{t\wedge\XcExit} \| \notag \\
		& \hspace*{1.0cm} + \sup_{y\in \bar\domain} \big\|\Diff^2\! \nn(y) - \Diff^2\! u(y)\big\|\ \| \Xc_{(t+h)\wedge\XcExit} - \Xc_{t\wedge\XcExit} \|^2  \\
		& \leq C \| \Xc_{(t+h)\wedge\XcExit} - \Xc_{t\wedge\XcExit} \|^3 + \|\nn-u\|_\domain \big( \| \Xc_{(t+h)\wedge\XcExit} - \Xc_{t\wedge\XcExit} \| + \| \Xc_{(t+h)\wedge\XcExit} - \Xc_{t\wedge\XcExit} \|^2 \big) \notag
	\end{align}
	Secondly,
	\begin{multline*}
		\mathcal{V}_t \defined f\big(\Xc_{t\wedge\XcExit}, \nn(\Xc_{t\wedge\XcExit}), \sigma(\Xc_{t\wedge\XcExit})^\top\Diff\!\nn(\Xc_{t\wedge\XcExit})\big) \big((t+h)\wedge\XcExit - t\wedge\XcExit \big)\\ 
		- f\big( \Xc_{t\wedge\XcExit}, u(\Xc_{t\wedge\XcExit}), \sigma( \Xc_{t\wedge\XcExit})^\top\Diff\! u( \Xc_{t\wedge\XcExit})\big) \big((t+h)\wedge\XcExit - t\wedge\XcExit \big) 
	\end{multline*}
	and since $f$ is Lipschitz and $\sigma$ is bounded, we have
	\begin{equation}\label{eqnProofLossRateBoundV}
		\begin{aligned}
			|\mathcal{V}_t| \leq & \; \lipschitz{f}\, h\, \Big(\big|\nn (\Xc_{t\wedge\XcExit})-u (\Xc_{t\wedge\XcExit})\big| + \big\|\sigma (\Xc_{t\wedge\XcExit})^\top\big(\Diff\! \nn(\Xc_{t\wedge\XcExit})-\Diff\! u(\Xc_{t\wedge\XcExit})\big)\big\| \Big) \\
			\leq & \; \lipschitz{f} \supremum{\sigma} \ h \ \|\nn -u\|_\domain .
		\end{aligned}
	\end{equation}
	Keeping the objective 
	$ \lossSPre{t}(\nn) - \lossSPre{t}(u) = \mathcal{U}_t + \mathcal{V}_t + \mathcal{W}_t $
	in mind, the third one has to be
    %
    \begin{equation}
        \begin{aligned}
            - \mathcal{W}_t & \defined \mathcal{U}_t + \mathcal{V}_t - \big(\lossSPre{t} (\nn) - \lossSPre{t} (u)\big)\\
			& = \big[ (\Diff\!\nn -\Diff\! u )^\top\sigma\big] (\Xc_{t\wedge\XcExit}) (W_{(t+h)\wedge\XcExit} - W_{t_{j}\wedge\XcExit}) \\
			& \hspace*{.5cm} + \big[\mu^\top (\Diff^2\!\nn -\Diff^2\! u)\sigma\big] (\Xc_{t\wedge\XcExit}) ((t+h)\wedge\XcExit - t\wedge\XcExit) (W_{(t+h)\wedge\XcExit} - W_{t\wedge\XcExit})\\
			& \hspace*{1.0cm} + \tfrac{1}{2} (W_{(t+h)\wedge\XcExit} - W_{t\wedge\XcExit})^\top \big[\sigma^\top (\Diff^2\!\nn - \Diff^2\! u )\sigma\big] (\Xc_{t\wedge\XcExit}) (W_{(t+h)\wedge\XcExit} - W_{t\wedge\XcExit})\\
			& \hspace*{1.5cm} - \tfrac{1}{2} \trace \big[ \sigma\sigma^\top (\Diff^2\!\nn -\Diff^2\! u) \big] (\Xc_{t\wedge\XcExit}) ((t+h)\wedge\XcExit - t\wedge\XcExit)
        \end{aligned}
    \end{equation}
	and with the Cauchy-Schwarz inequality for the Frobenius inner product
	we have
	\begin{align}\label{eqnProofLossRateBoundW}
		|\mathcal{W}_t| & \leq \supremum{\sigma}\, \big\| \Diff\!\nn (\Xc_{t\wedge\XcExit})-\Diff\! u (\Xc_{t\wedge\XcExit})\big\| \|W_{(t+h)\wedge\XcExit} - W_{t\wedge\XcExit}\|\notag\\
		& \hspace*{1.0cm} + \supremum{\mu}\supremum{\sigma}\, h\, \big\|\Diff^2\!\nn (\Xc_{t\wedge\XcExit})-\Diff^2\! u (\Xc_{t\wedge\XcExit})\big\| \| W_{(t+h)\wedge\XcExit} - W_{t\wedge\XcExit} \| \\
		& \hspace*{2.0cm} + \tfrac{1}{2}\supremum{\sigma}^2\, \big\|\Diff^2\!\nn (\Xc_{t\wedge\XcExit})-\Diff^2\! u (\Xc_{t\wedge\XcExit})\big\| \| W_{(t+h)\wedge\XcExit} - W_{t\wedge\XcExit} \|^2 \notag\\
		& \hspace*{3.0cm} + \tfrac{1}{2} \supremum{\sigma}^2\, h\, \big\|\Diff^2\!\nn (\Xc_{t\wedge\XcExit})-\Diff^2\! u (\Xc_{t\wedge\XcExit})\big\|\notag\\
		& \leq C \|\nn-u\|_{\domain} \Big( (1+h) \sup_{s\in[t,t+h]} \| W_{s} - W_{t} \| + \sup_{s \in [t,t+h]} \| W_{s} - W_{t} \|^2 + h \Big) \notag
	\end{align}
	We combine the bounds \eqref{eqnProofLossRateBoundU}, \eqref{eqnProofLossRateBoundV}, \eqref{eqnProofLossRateBoundW} and Proposition~\ref{prop:PropertiesOfStoppedEuler} to obtain
	\begin{multline}
		\big|\lossSPre{t}(\nn) - \lossSPre{t}(u)\big|^2 \leq C \sup_{s \in [t,t+h]} \| W_{s} - W_{t} \|^6 \\ 
		+ C \|U-u\|_{\domain}^2 \Big( \sup_{s \in [t,t+h]} \| W_{s} - W_{t}\|^2 + \sup_{s \in [t,t+h]} \| W_{s} - W_{t}\|^4 + h^2 \Big)
	\end{multline}
    Recall at the beginning $t\in\unboundedTimeGrid$ was chosen arbitrary and all calculations were carried out on the event $\{ t < \XcExit \}$. Whence we are in position to use \eqref{eqn:GirsanovBM} from Lemma~\ref{lemma:Girsanov}\footnote{
        Applied thrice (for $q=6,4,2$) to $\tau_1=0, \tau_2=\XcExitPlus$ and $\psi = \Psi $ explicitly using $\unboundedTimeGrid\cap[0,\XcExit) = \unboundedTimeGrid\cap[0,\XcExitPlus)$. 
    }
    and compute
	\begin{align}
		& \E \Big[ \Psi\!\! \sum_{ t\in\unboundedTimeGrid\cap[0,\XcExit) } \big| \lossSPre{t} (\nn)- \lossSPre{t} (u)\big|^2 \Big] \\
		& \hspace*{0.5cm} \leq C \E \Big[ \Psi\!\! \sum_{t\in\unboundedTimeGrid\cap[0,\XcExit)} \sup_{s\in[t,t+h]} \| W_{s}-W_{t} \|^6 \Big] 
		+ C \|\nn-u\|_{\domain}^2\ h^2 \E \Big[ \Psi\!\! \sum_{t\in\unboundedTimeGrid\cap[0,\XcExit)} 1 \Big] \\
		& \hspace*{1.0cm} + C \|\nn-u\|_{\domain}^2\  \E \Big[ \Psi\!\!  \sum_{t\in\unboundedTimeGrid\cap[0,\XcExit)} \sup_{s\in[t,t+h]} \| W_{s}-W_{t} \|^2 + \! \sup_{s\in[t,t+h]} \| W_{s}-W_{t} \|^4 \Big] \\
		& \hspace*{0.5cm} \leq C h^{\frac{6}{2}-1} \E \Big[ \Psi \big(1+ \big\langle \E_{\circ} [\Psi] \big\rangle_{\XcExit^+}^{3} \big) \XcExit^+ \Big] 
		+ C \|\nn-u\|_{\domain}^2\ h \E \big[ \Psi \XcExit^+ \big] \\
		& \hspace*{1.0cm} + C \|\nn-u\|_{\domain}^2 \bigg(\! \E \Big[ \Psi \big(1+ \big\langle \E_{\circ} [\Psi] \big\rangle_{\XcExit^+} \big) \XcExit^+ \Big] 
		+ h \E \Big[ \Psi \big(1+ \big\langle \E_{\circ} [\Psi] \big\rangle_{\XcExit^+}^{2} \big) \XcExit^+ \Big]\! \bigg). \qedhere
	\end{align}
\end{proof}
\begin{lemma}\label{lemma:proofTriangulusMartingaleBound}
	Let $\Psi \in L^1$ denote a random variable bounded away from zero.
    Consider 
    \begin{equation}\label{defProofLossRateMartingaleIntegrals}
		\postInt_t \defined \int_{t}^{t+h} \Diff\!\nn(\Xc_s)^\top\! \sigma(\Xc_{t}) \de W_s - [\Diff\!\nn^\top\! \sigma](\Xc_{t}) \big(W_{t+h} - W_{t} \big) \quad t\in\unboundedTimeGrid 
	\end{equation}
    then there is a constant $C>0$ (not depending on $h$) such that
	\begin{multline}
		\E \Big[ \Psi \sum_{t\in\unboundedTimeGrid\cap[0,\StoppD)} |\lossSPost{t} (\nn)|^2 \Big] \leq 
        C \E \Big[ \Psi \sum_{ t\in\unboundedTimeGrid\cap[\XcExit^+,\StoppD) } \big|\postInt_t \big|^2 \Big] + C \E [\Psi^2]^\frac{1}{2} \big( h^2 + h^3 \big) \\
        + C \E \bigg[ \Psi \Big( h^2+h+ h^2\big\langle \E_{\circ} [\Psi] \big\rangle_{\StoppD} + h\big\langle \E_{\circ} [\Psi] \big\rangle_{\StoppD}^2 \Big) (\StoppD - \XcExit^+ ) \bigg]. \closeEqn  
	\end{multline}
\end{lemma}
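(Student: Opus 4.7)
The plan is to split the sum
$\sum_{t\in\unboundedTimeGrid\cap[0,\StoppD)}|\lossSPost{t}(\nn)|^{2}$
at the unique ``crossing'' grid point $t^{\ast}\defined\XcExitPlus-h$ (the only index with $\XcExit\in(t^{\ast},t^{\ast}+h]$; all smaller grid points contribute zero by definition of $\lossSPost{t}$), and to handle this single summand separately from the bulk sum over $t\in\unboundedTimeGrid\cap[\XcExitPlus,\StoppD)$. On the bulk, each interval $[t,t+h]$ lies beyond $\XcExit$ and has full length $h$, so I apply It\^o's formula to $s\mapsto\nn(\Xc_s)$ on $[t,t+h]$ and substitute into $\mathcal{L}_{t,t+h}(\nn)(\Xc_t,\Xc_{t+h})$. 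The difference between the resulting It\^o integral of $\Diff\!\nn(\Xc_s)^{\top}\sigma(\Xc_t)$ against $\de W_s$ and its frozen-coefficient evaluation $[\Diff\!\nn^{\top}\sigma](\Xc_t)(W_{t+h}-W_t)$ isolates $\postInt_t$ exactly. The remainder collects the main-order drift $h[\InfGen[\nn]+f(\cdot,\nn,\sigma^{\top}\Diff\!\nn)](\Xc_t)$, the Taylor-type integrals $\int_t^{t+h}[\Diff\!\nn(\Xc_s)-\Diff\!\nn(\Xc_t)]^{\top}\mu(\Xc_t)\,\de s$ and its $\Diff^{2}\!\nn$-analogue, the cross term $-[\mu^{\top}\Diff^{2}\!\nn\sigma](\Xc_t)\,h\,(W_{t+h}-W_t)$, and the centered Brownian quadratic $\tfrac12\trace[\sigma^{\top}\Diff^{2}\!\nn\sigma](\Xc_t)h-\tfrac12(W_{t+h}-W_t)^{\top}[\sigma^{\top}\Diff^{2}\!\nn\sigma](\Xc_t)(W_{t+h}-W_t)$.

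Using the global boundedness of $\nn,\Diff\!\nn,\Diff^{2}\!\nn$ (from $\Fct\subset\HoelderSpace{3}{\gamma}(\R^d)$ bounded) and of $\mu,\sigma$ at the grid values $\X_t\in\domain$, the Lipschitz continuity of $\Diff\!\nn$ and $\Diff^{2}\!\nn$, together with Proposition~\ref{prop:PropertiesOfStoppedEuler}, each bulk summand then obeys a pathwise estimate of the form
\[
|\lossSPost{t}(\nn)-\postInt_t|^{2}\;\leq\;C\Bigl(h^{2}+h^{2}\!\!\sup_{s\in[t,t+h]}\!\!\|W_s-W_t\|^{2}+\!\!\sup_{s\in[t,t+h]}\!\!\|W_s-W_t\|^{4}\Bigr),
\]
where the pure $h^{2}$ absorbs the squared main-order drift and the deterministic pieces of the Taylor/centered-quadratic remainders, the cross $h^{2}\|W\|^{2}$ encompasses the $h(W_{t+h}-W_t)$-terms and the Brownian-scale parts of the Taylor remainders, and $\|W\|^{4}$ controls the centered quadratic. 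Multiplying by $\Psi$, summing over $t\in\unboundedTimeGrid\cap[\XcExitPlus,\StoppD)$, and invoking Lemma~\ref{lemma:Girsanov} separately with exponents $q=2$ and $q=4$ pulls each Brownian supremum out of the sum at the cost of a factor $h^{q/2-1}$ and a prefactor $1+\langle\E_{\circ}[\Psi]\rangle_{\StoppD}^{q/2}$. Combined with the $Ch\cdot(\StoppD-\XcExitPlus)$ contribution coming from the $h^{2}$-pathwise term summed over at most $(\StoppD-\XcExitPlus)/h$ indices, this produces exactly the bulk bound $C\E[\Psi(h^{2}+h+h^{2}\langle\E_{\circ}[\Psi]\rangle_{\StoppD}+h\langle\E_{\circ}[\Psi]\rangle_{\StoppD}^{2})(\StoppD-\XcExitPlus)]$, while the isolated $\postInt_t$ yields the first summand in the claim.

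The crossing term at $t^{\ast}$ requires separate treatment, since its effective interval $[\XcExit,\XcExitPlus]$ has random length in $[0,h]$ and begins at the boundary point $\Xc_{\XcExit}\in\partial\domain$, so the full-step cancellations above are unavailable. I apply Cauchy-Schwarz to obtain $\E[\Psi|\lossSPost{t^{\ast}}(\nn)|^{2}]\leq\E[\Psi^{2}]^{1/2}\E[|\lossSPost{t^{\ast}}(\nn)|^{4}]^{1/2}$, and estimate the fourth moment by combining an It\^o expansion on $[\XcExit,\XcExitPlus]$, the Burkholder-Davis-Gundy inequality on the mismatched stochastic-integral part, and the $\mathcal{F}_{\XcExit}$-conditional scaling $\E[\|W_{\XcExitPlus}-W_{\XcExit}\|^{p}\mid\mathcal{F}_{\XcExit}]\leq C(\XcExitPlus-\XcExit)^{p/2}\leq Ch^{p/2}$. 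Carefully tracking the cancellation between the third-order Taylor expansion of $\nn$ and the explicit quadratic correction in $\mathcal{L}$ (absorbing the coefficient mismatch $\sigma(\X_{t^{\ast}})-\sigma(\Xc_{\XcExit})$ into higher-order terms via the Lipschitz property of $\sigma$) yields $\E[|\lossSPost{t^{\ast}}(\nn)|^{4}]\leq C(h^{4}+h^{6})$, whence the announced $C\E[\Psi^{2}]^{1/2}(h^{2}+h^{3})$ contribution.

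The principal technical obstacle, as I see it, is the precise bookkeeping in the bulk step: each Brownian-dependent remainder must be matched to the correct exponent in Lemma~\ref{lemma:Girsanov} so that the resulting prefactors align exactly as $h^{2}\langle\E_\circ[\Psi]\rangle_{\StoppD}$ (from $q=2$) and $h\langle\E_\circ[\Psi]\rangle_{\StoppD}^{2}$ (from $q=4$). In particular, the centered Brownian quadratic must be controlled by $\sup\|W\|^{4}$ rather than by the cruder $\sup\|W\|^{2}+h$, and the Taylor-type remainders by $h^{2}\sup\|W\|^{2}$ rather than by $\sup\|W\|^{2}$, so as not to inflate the $h$-power and spoil the claimed rate.
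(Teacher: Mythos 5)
Your proposal is correct and follows essentially the same route as the paper's proof: split off the single crossing step at $\XcExit$, use It\^o's formula on the full bulk steps $t\in\unboundedTimeGrid\cap[\XcExitPlus,\StoppD)$ to isolate $\postInt_t$, bound the remainder pathwise by $C\bigl(h^2+h^2\sup_{s\in[t,t+h]}\|W_s-W_t\|^2+\sup_{s\in[t,t+h]}\|W_s-W_t\|^4\bigr)$ and apply Lemma~\ref{lemma:Girsanov} with $q=2,4$, and treat the crossing summand via Cauchy--Schwarz, Burkholder--Davis--Gundy and the strong Markov property to get the $C\,\E[\Psi^2]^{1/2}(h^2+h^3)$ contribution. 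The only deviations are cosmetic (your finer Taylor-style bookkeeping of the bulk drift and of the crossing step, where the paper simply bounds those terms by $Ch$ resp.\ handles $\postInt_{\XcExit,\XcExitPlus}$ directly), and they do not change the argument.
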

\begin{proof}
	Fix $t\in\unboundedTimeGrid$. With It\={o}'s formula we have
		\begin{multline}\label{eqnProofLossBoundItoRepresentationOfContinuation}
		\nn(\Xc_{(t+h)\vee\XcExit}) - \nn(\Xc_{t\vee\XcExit})  = \int_{t\vee\XcExit}^{(t+h)\vee\XcExit} \Diff\!\nn(\Xc_s)^\top \sigma (\Xc_{t}) \de W_s \\
		+ \int_{t\vee\XcExit}^{(t+h)\vee\XcExit} \Diff\!\nn(\Xc_s)^\top \mu (\Xc_{t}) + \tfrac{1}{2} \trace \big([\sigma\sigma^\top](\Xc_{t}) \Diff^2\! U(\Xc_s)\big) \de s
	\end{multline}
	and thus we rewrite the loss summand (cf. \eqref{defLossSummand_GeneralVersion} resp. \eqref{defLossSummand});
	\begin{equation}\label{eqn:AlternativeRepresentationLossPost}
		\begin{aligned}
			\lossSPost{t}(\nn) & = \int_{t\vee\XcExit}^{(t+h)\vee\XcExit} \Diff\!\nn(\Xc_s)^\top\! \mu (\Xc_{t}) + \tfrac{1}{2} \trace \big( [\sigma\sigma^\top](\Xc_{t}) \Diff^2\!\nn(\Xc_s) \big) \de s \\
			& \hspace*{0.5cm} + f\big(\Xc_{t\vee\XcExit}, \nn(\Xc_{t\vee\XcExit}), \sigma(\Xc_{t\vee\XcExit})^\top\! \Diff\!\nn(\Xc_{t\vee\XcExit})\big) ((t+h)\vee\XcExit-t\vee\XcExit) \\
			& \hspace*{1.0cm} - [\mu^\top\! \Diff^2\!\nn\sigma](\Xc_{t\vee\XcExit}) (W_{(t+h)\vee\XcExit} - W_{t\vee\XcExit}) ((t+h)\vee\XcExit-t\vee\XcExit) \\
			& \hspace*{1.5cm} -\tfrac{1}{2} (W_{(t+h)\vee\XcExit} - W_{t\vee\XcExit})^\top [\sigma^\top\! \Diff^2\!\nn\sigma](\Xc_{t\vee\XcExit}) (W_{(t+h)\vee\XcExit} - W_{t\vee\XcExit}) \\
			& \hspace*{2.0cm} + \tfrac{1}{2} \trace [\sigma^\top\! \Diff^2\!\nn\sigma] (\Xc_{t\vee\XcExit}) ((t+h)\vee\XcExit-t\vee\XcExit)  \\
			& \hspace*{2.5cm} + \postInt_{t}
		\end{aligned}
	\end{equation}
	We bound the first five lines on the event $\{ t<\StoppD \}$.\footnote{
        On said event we have $\{ \1_{ \{t<\StoppD\}} \Xc_{t} \}_{t\in\unboundedTimeGrid} \cup \{\Xc_{\XcExit} \} \subset \overline{\domain}\cup \{0\} $ and thus $\mu, \sigma, f, \Diff\nn$ and $\Diff^2\nn$ are bounded there.
    } 
    We compute
	\begin{equation}
		\Big|\int_{t\vee\XcExit}^{(t+h)\vee\XcExit} \Diff\!\nn(\Xc_s)^\top\! \mu (\Xc_{t}) + \tfrac{1}{2} \trace \big( [\sigma\sigma^\top](\Xc_{t}) \Diff^2\!\nn(\Xc_s) \big) \de s \Big| \leq C h
	\end{equation}
	where we additionally used that $\Diff\!\nn$ and $\Diff^2\!\nn$ are globally bounded. 
	Moreover, we have that
	\begin{equation}
		\Big( \big|f\big(\Xc_{t\vee\XcExit}, \nn(\Xc_{t\vee\XcExit}), [\sigma^\top\! \Diff\! \nn](\Xc_{t\vee\XcExit}) \big) \big| 
		+ \big| \trace [\sigma^\top\! \Diff^2\!\nn\sigma] (\Xc_{t\vee\XcExit}) \big| \Big)
		((t+h)\vee\XcExit-t\vee\XcExit) \leq C h
	\end{equation}
	and
	\begin{multline}
		\big|\tfrac{1}{2} (W_{(t+h)\vee\XcExit} - W_{t\vee\XcExit})^\top [\sigma^\top\! \Diff^2\!\nn\sigma](\Xc_{t\vee\XcExit}) (W_{(t+h)\vee\XcExit} - W_{t\vee\XcExit})\big| \\ 
		+ \big| [\mu^\top\! \Diff^2\!\nn\sigma](\Xc_{t\vee\XcExit}) (W_{(t+h)\vee\XcExit} - W_{t\vee\XcExit}) \big| ((t+h)\vee\XcExit-t\vee\XcExit) \\
		\leq C \big(\| W_{(t+h)\vee\XcExit} - W_{t\vee\XcExit} \|^2 + h\|W_{(t+h)\vee\XcExit} - W_{t\vee\XcExit}\|\big).
	\end{multline}
	Combine these observations with the fact that $\lossSPost{t}(\nn) = 0$ on $\{ \XcExit > t+h \}$ to obtain
	\begin{equation} 
		\big|\lossSPost{t} (\nn)\big| 
		\leq \1_{ \{ \XcExit \leq t+h \} } C \Big( h+ h \sup_{s\in[t,t+h]} \| W_{s} - W_{t} \|+ \sup_{s\in[t,t+h]}\| W_{s} - W_{t} \|^2 + |\postInt_{t}| \Big)
	\end{equation}
	\newpage
	With this we can bound the sum of \textit{post}-penalization terms
    %
    \begin{equation}
        \begin{aligned}
            \sum_{t\in\unboundedTimeGrid\cap[0,\StoppD)} \big|\lossSPost{t}(\nn)\big|^2 & \leq C \sum_{ t\in \unboundedTimeGrid\cap[\XcExit^+,\StoppD) } |\postInt_t|^2  
            + C \sum_{ t\in \unboundedTimeGrid\cap[\XcExit^+,\StoppD) } h^2 \\
    		& \hspace*{0.5cm} + C \sum_{ t\in \unboundedTimeGrid\cap[\XcExit^+,\StoppD) } h^2 \sup_{s\in[t, t+h]} \| W_{s} - W_{t} \|^2 + \sup_{s\in[t,t+h]} \| W_{s} - W_{t} \|^4 \\
    		& \hspace*{1.0cm} + C \big| \postInt_{\XcExit,\XcExitPlus}\big|^2 + C \big( h^2 + h^2\| W_{\XcExitPlus} - W_{\XcExit} \|^2 + \| W_{\XcExitPlus} - W_{\XcExit} \|^4 \big)
        \end{aligned}
    \end{equation}
	where $\postInt_{\XcExit,\XcExitPlus}$ is the quantity corresponding to \eqref{defProofLossRateMartingaleIntegrals} surrounding $\XcExit$, i.e.
	\begin{equation}
		\postInt_{\XcExit,\XcExitPlus} \defined \int_{\XcExit}^{\XcExitPlus}\!\! \Diff\!\nn(\Xc_s)^\top \Big(\sum_{\ell\in\unboundedTimeGrid} \1_{ \{ s\in(\ell,\ell+h] \} \cap \{ \XcExit\in [\ell,s)\} } \sigma (\Xc_{\ell}) \Big) \de W_s 
        - [\Diff\!\nn^\top\! \sigma] (\Xc_{\XcExit})(W_{\XcExitPlus}-W_{\XcExit}) 
	\end{equation}
	We conclude with bounds for the expectation of latter two lines of the preceding bound. We use the bound \eqref{eqn:GirsanovBM} from Lemma~\ref{lemma:Girsanov}\footnote{
        Applied to $\tau_1=\XcExitPlus, \tau_2=\StoppD$  and $\psi = \Psi $ where we are using that $\unboundedTimeGrid\cap[\XcExit,\StoppD) = \unboundedTimeGrid\cap[\XcExitPlus,\StoppD)$.
    }
    and obtain
	\begin{multline}\label{eqnProofIntrinsicRate_EQ1}
		\E \Big[ \Psi \sum_{t\in\unboundedTimeGrid\cap[\XcExitPlus, \StoppD)} h^2 \sup_{s\in[t, t+h]} \| W_{s} - W_{t} \|^2 + \sup_{s\in[t,t+h]} \| W_{s} - W_{t} \|^4 \Big] \\
		\leq C h^2 \E \Big[ \Psi \big(1+ \big\langle \E_{\circ} [\Psi] \big\rangle_{\StoppD} \big) (\StoppD - \XcExit^+ ) \Big] 
        + C h \E \Big[ \Psi \big(1+ \big\langle \E_{\circ} [\Psi] \big\rangle_{\StoppD}^2 \big) (\StoppD - \XcExit^+ ) \Big]
	\end{multline}
    Moreover we have that
	\begin{multline}\label{eqnProofTriangulus_EQ1}
		 \big|\postInt_{\XcExit,\XcExitPlus}\big|^4 \leq 2^3 \Big| \int_{\XcExit}^{\XcExitPlus} \Diff\!\nn(\Xc_s) \Big( \sum_{\ell\in\unboundedTimeGrid} \1_{ \{s\in(\ell,\ell+h]\} \cap \{ \XcExit\in [\ell,s) \} } \sigma(\Xc_{\ell}) - \sigma (\Xc_{\XcExit}) \Big) \de W_s \Big|^4 \\
		 + 2^3 \Big| \int_{\XcExit}^{\XcExitPlus} \big(\Diff\!\nn (\Xc_s) - \Diff\!\nn (\Xc_{\XcExit}) \big) \sigma(\Xc_{\XcExit}) \de W_s\Big|^4 
	\end{multline}
    We use Burkholder-Davis-Gundy (with $c_4>0$), the Lipschitz continuity of $\Diff\nn$ and $\XcExitPlus-\XcExit \leq h$;
	\begin{equation}
		\E \bigg[ \Big| \int_{\XcExit}^{\XcExitPlus} \big(\Diff\!\nn (\Xc_s) - \Diff\!\nn (\Xc_{\XcExit}) \big) \sigma(\Xc_{\XcExit}) \de W_s\Big|^4 \bigg] 
		\leq c_4 \lipschitz{\Diff\nn}^4 \supremum{\sigma}^4\ \E \Big[ h^2 \sup_{r\in[\XcExit, \XcExitPlus]} \| \Xc_r - \Xc_{\XcExit} \|^4 \Big]
	\end{equation}
	Similarly we estimate the first term of \eqref{eqnProofTriangulus_EQ1}, i.e. using the Burkholder-Davis-Gundy inequality and a standard bound of linear maps. We proceed with a bound of the corresponding remainder;
	\begin{equation}
		\E \bigg[ \Big( \int_{\XcExit}^{\XcExitPlus}\!\! \sum_{\ell\in\unboundedTimeGrid} \1_{ \{s\in (\ell,\ell+h]\}\cap\{\XcExit\in[\ell,s)\} } \|\sigma(\Xc_{\ell}) - \sigma (\Xc_{\XcExit}) \|^2 \de s \Big)^2 \bigg] \\
		\leq \lipschitz{\sigma}^4 h^2 \E \Big[ \sup_{r\in[\XcExit,\XcExitPlus]} \|\Xc_{r} - \Xc_{\XcExit} \|^4 \Big]
	\end{equation}
	Combine both and use the Cauchy-Schwarz inequality to conclude
	\begin{equation}
		\E \big[ \Psi |\postInt_{\XcExit,\XcExitPlus}|^2 \big]^2 \leq \E [\Psi^2] \E \big[ |\postInt_{\XcExit,\XcExitPlus}|^4 \big] \leq C \E [\Psi^2] h^2 \E \Big[ \sup_{r\in[\XcExit,\XcExitPlus]} \big\|\Xc_{r} - \Xc_{\XcExit} \big\|^4 \Big] \leq C \E [\Psi^2] h^4
	\end{equation}
    where the last step is due to Proposition \ref{prop:PropertiesOfStoppedEuler}, the strong Markov property of the Brownian motion and Doob's $L^4$-inequality; to be precise we have
	\begin{equation}
		\E \Big[ \sup_{r\in[\XcExit,\XcExitPlus]} \|\Xc_{r} - \Xc_{\XcExit} \|^4 \Big] 
		\leq C \E \Big[ h^4 +\!\! \sup_{r\in[\XcExit,\XcExitPlus]}\! \| W_{r} - W_{\XcExit} \|^4 \Big] 
		\leq C \E \Big[ h^4 + \sup_{r\in[0,h]} \| \widetilde{W}_{r} \|^4 \Big]
		\leq C h^2 
	\end{equation}
    Analogously using the Cauchy-Schwarz inequality and the strong Markov property we obtain
	\begin{equation}
		\E \Big[ \Psi \big( h^2 + h^2\| W_{\XcExitPlus} - W_{\XcExit} \|^2 + \| W_{\XcExitPlus} - W_{\XcExit} \|^4 \big) \Big] \leq C (h^2 + h^2 h + h^2) \E [\Psi^2]^\frac{1}{2} 
	\end{equation}
    Combine the latter two observations with \eqref{eqnProofIntrinsicRate_EQ1} to obtain the claim. 
\end{proof}

\begin{lemma}\label{lemma:proofTriangulusMartingaleRate}
    Let $\Psi \in L^4$ be bounded away from zero.
	There is $C>0$ (not depending on $h$) 
	and a random variable $\GirsanovG{}$ (see \eqref{Def:Girsanov_G} below)
	such that the sequence $\postInt$ (see \eqref{defProofLossRateMartingaleIntegrals}) satisfies
	\begin{equation}
		\E\Big[\Psi \sum_{ t\in \unboundedTimeGrid\cap[0,\StoppD) } \big|\postInt_t \big|^2\Big]
        \leq C h\ \E \Big[ h^2  \Psi \big(1 + \big\langle \E_\circ^\prob [ \Psi \big] \big\rangle_{\StoppD} \big) (\StoppD - \XcExit^+) + \GirsanovG{} (\StoppD - \XcExit^+) \Big]. \closeEqn
	\end{equation}	
\end{lemma}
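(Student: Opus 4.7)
The starting observation is that $\postInt_t$ admits the compact representation
\begin{equation*}
\postInt_t = \int_t^{t+h} \bigl[\Diff\!\nn(\Xc_s)-\Diff\!\nn(\Xc_t)\bigr]^\top \sigma(\Xc_t)\,\de W_s,
\end{equation*}
which follows from rewriting the deterministic subtraction in \eqref{defProofLossRateMartingaleIntegrals} as the stochastic integral of the $[t,t+h]$-constant process $\Diff\!\nn(\Xc_t)^\top\sigma(\Xc_t)$. Thus each $\postInt_t$ is the terminal value of a continuous $\mathfrak{F}$-martingale on $[t,t+h]$ whose integrand vanishes at $s=t$ and grows only as fast as $\|\Xc_s-\Xc_t\|$.

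First I would apply It\={o}'s formula to $|\postInt_t(\cdot)|^2$ on $[t,t+h]$ to split $|\postInt_t|^2$ into a martingale increment (to be treated by a change of measure) plus the pathwise quadratic-variation term $\int_t^{t+h}\bigl\|[\Diff\!\nn(\Xc_s)-\Diff\!\nn(\Xc_t)]^\top\sigma(\Xc_t)\bigr\|^2\,\de s$. Using the Lipschitz continuity of $\Diff\!\nn$ (inherited from $\nn\in\HoelderSpace{3}{\gamma}(\overline{\domain})$) together with the boundedness of $\sigma$, this latter term is dominated pathwise by $C\int_t^{t+h}\|\Xc_s-\Xc_t\|^2\,\de s$; Proposition~\ref{prop:PropertiesOfStoppedEuler} then reduces it to $C h^{3}+C h\sup_{s\in[t,t+h]}\|W_s-W_t\|^2$ on $\{t<\StoppD\}$.

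Next I would sum over $t\in\unboundedTimeGrid\cap[\XcExit^+,\StoppD)$, multiply by $\Psi$ and take expectation. The Brownian-increment contribution is handled through the Girsanov-type identity \eqref{eqn:GirsanovBM} of Lemma~\ref{lemma:Girsanov} invoked between the endpoints $\XcExitPlus$ and $\StoppD$, exactly as in Lemmas~\ref{lemma:proofTriangulusLossDistance} and~\ref{lemma:proofTriangulusMartingaleBound}; this absorbs the random weight into the factor $\Psi(1+\langle\E_{\circ}[\Psi]\rangle_{\StoppD})$ and, after extracting the common factor of $h$, produces the first summand $h\cdot h^{2}\,\Psi(1+\langle\E_{\circ}[\Psi]\rangle_{\StoppD})(\StoppD-\XcExit^+)$ of the claim. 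The remaining martingale increments from It\={o}'s formula, weighted by $\Psi$ and summed, are packaged into the random variable $\GirsanovG{}$ defined in \eqref{Def:Girsanov_G} via a second application of the change-of-measure machinery, yielding the second summand $\GirsanovG{}(\StoppD-\XcExit^+)$.

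The main obstacle is the combination of a Brownian-determined random number of summands with the non-adapted weight $\Psi$: one cannot simply condition on $\mathcal{F}_t$ and invoke the It\={o} isometry. The Girsanov-type bound of Lemma~\ref{lemma:Girsanov} is the essential tool for moving $\Psi$ past the random sum, while the construction of $\GirsanovG{}$ must be arranged so that the resulting object remains integrable under the standing assumption $\Psi\in L^4$; it is precisely this integrability that is subsequently verified in Step~3 of the proof of Theorem~\ref{thm:general_dynamical_LossRate} via Lemma~\ref{lemma:IntegrabilityQV_GirsanovGG}.
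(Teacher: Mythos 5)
Your opening representation $\postInt_t=\int_t^{t+h}\bigl[\Diff\!\nn(\Xc_s)-\Diff\!\nn(\Xc_t)\bigr]^\top\sigma(\Xc_t)\,\de W_s$ is correct and is essentially the paper's starting point (the paper assembles these increments into one martingale $\M=\int_0^{\cdot\wedge\StoppD}H_s\de W_s$ with a predictable $H$ vanishing before $\XcExit$), and your pathwise bound of the quadratic-variation term via the Lipschitz property of $\Diff\!\nn$ and Proposition~\ref{prop:PropertiesOfStoppedEuler} matches \eqref{eqnProofLossRateExpectationOfIntegrandForMartingale}. The gap is in how you treat the weighted sum of \emph{squared} increments. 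The paper does this in one stroke with the martingale bound \eqref{eqn:GirsanovMart} of Lemma~\ref{lemma:Girsanov} (applied with $\tau_1=\XcExit^+$, $\tau_2=\StoppD$, $\psi=\Psi$), which converts $\E[\Psi\sum_t|\postInt_t|^2]$ into $\E\bigl[\Psi\bigl(1+\langle\E_\circ^\prob[\Psi]\rangle_{\StoppD}\bigr)\int_{\XcExit^+}^{\StoppD}\|H_s\|^2\de s\bigr]$; only then is \eqref{eqn:GirsanovBM} applied a second time, with the already augmented weight $\Psi\bigl(1+\langle\E_\circ^\prob[\Psi]\rangle_{\StoppD}\bigr)$, to the sum of Brownian suprema coming from the QV bound — and it is precisely this second application that produces the doubly nested weight $\GirsanovG{}$ of \eqref{Def:Girsanov_G}. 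You instead apply It\={o}'s formula to $|\postInt_t|^2$ and split off ``martingale increments'' to be ``packaged into $\GirsanovG{}$ via a second application of the change-of-measure machinery.'' This is where the argument is missing: once multiplied by the non-adapted weight $\Psi$ and summed up to the random horizon $\StoppD$, those It\={o}-remainder increments do \emph{not} have vanishing expectation, and under the change of measure $\de\Q\propto\E_\infty[\Psi]\de\prob$ they acquire a covariation drift $\int\frac{1}{L}\de\langle N,L\rangle$ involving the running integral $\sup_{s}|\postInt_{t,s}|$ itself. Controlling it requires a Kunita--Watanabe estimate followed by a buy-back (Young/Doob--BDG) against the quadratic-variation term; none of this is carried out, and it is exactly the content that \eqref{eqn:GirsanovMart} encapsulates. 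Asserting that this remainder ``yields the second summand $\GirsanovG{}(\StoppD-\XcExit^+)$'' is unsubstantiated.

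There is also a bookkeeping inconsistency that signals the same problem: in the claimed inequality, the term $\GirsanovG{}(\StoppD-\XcExit^+)$ must come from the Brownian-supremum part of the QV bound weighted by $\Psi\bigl(1+\langle\E_\circ^\prob[\Psi]\rangle_{\StoppD}\bigr)$, while the $h^2\,\Psi\bigl(1+\langle\E_\circ^\prob[\Psi]\rangle_{\StoppD}\bigr)(\StoppD-\XcExit^+)$ term comes from the deterministic $h^3$-per-interval part (no further Girsanov needed, just counting the $(\StoppD-\XcExit^+)/h$ grid points). In your outline these attributions are swapped: you let \eqref{eqn:GirsanovBM} with weight $\Psi$ produce the first summand and the It\={o} remainder produce $\GirsanovG{}$. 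But \eqref{eqn:GirsanovBM} applied with the bare weight $\Psi$ only ever generates the factor $\Psi\bigl(1+\langle\E_\circ^\prob[\Psi]\rangle\bigr)$, never the nested weight $\GirsanovG{}$; the nesting appears only because the paper's first step (the \eqref{eqn:GirsanovMart} bound) has already augmented the weight before the Brownian sums are treated. To repair your proof, either invoke \eqref{eqn:GirsanovMart} directly on $\E[\Psi\sum_t|\postInt_t|^2]$ as the paper does, or carry out in full the measure-change estimate of the It\={o} local-martingale remainder described above.
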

\begin{proof}
   	The argument relies on Lemma~\ref{lemma:Girsanov}. In order to apply this result we rephrase $(\postInt_t)_{t\in\unboundedTimeGrid}$ as increments of a square integrable martingale. For this consider the predictable process
   	\begin{equation}
   		\begin{aligned}
   			H_s & \defined \sum_{\ell\in\unboundedTimeGrid} \1_{\{s\in (\ell ,\ell+h] \}} \big\{ \1_{\{ \XcExit < s \}} \big(\Diff\!\nn (\Xc_s) - \Diff\!\nn(\Xc_{\ell}) \big)^\top \sigma (\Xc_{\ell}) \\
   			& \hspace*{3.5cm} + \1_{\{ \ell < \XcExit \leq s \}} \big( [\Diff\!\nn^\top\sigma] (\Xc_{\ell}) -  [\Diff\!\nn^\top\sigma] (\Xc_{\XcExit}) \big) \big\} \quad s\geq 0.
   		\end{aligned} 
   	\end{equation}
   	Note that $H = 0$ on $\{ s < \XcExit \}$. Moreover since $\Diff\!\nn$ is globally bounded and $\sigma$ is continuous on $\overline{\domain}$ the process $H$ is bounded on $[0,\StoppD]$.
   	%
   	%
    Thus $ \M_t \defined \int_0^{t\wedge\StoppD} H_s \de W_s $, $t\geq 0$ 
   	defines a uniformly integrable martingale. By construction increments of $\M$ along $\unboundedTimeGrid$ coincide with $\postInt$ (see \eqref{defProofLossRateMartingaleIntegrals}), i.e.
   	$ \postInt_{t} = \M_{(t+h)\vee\XcExit^+} - \M_{t\vee\XcExit^+}$ for $t\in\unboundedTimeGrid$.
   	From the bound \eqref{eqn:GirsanovMart} of Lemma~\ref{lemma:Girsanov}\footnote{
        Applied to $\tau_1=\XcExit^+, \tau_2=\StoppD$ and $\psi = \Psi$.
    } 
    we obtain
   	\begin{equation}\label{lemma:proofTriangulusBoundSum1}
   		\E \Big[\Psi \sum_{ t\in\unboundedTimeGrid\cap[\XcExit^+,\StoppD) } \big|\M_{t+h} - \M_{t}\big|^2\Big] \leq C \E \Big[ \Psi \big(1 + \big\langle \E_\circ^\prob [ \Psi \big] \big\rangle_{\StoppD} \big) \int_{\XcExit^+}^{\StoppD} \|H_s\|^2\de s \Big]
   	\end{equation}
   	%
   	Since $\Diff\nn$ is Lipschitz, $\sigma$ is bounded on $\overline{\domain}$ we use Propostion~\ref{prop:PropertiesOfStoppedEuler} on $\{ t<\StoppD \}$ to compute
   	\begin{equation}\label{eqnProofLossRateExpectationOfIntegrandForMartingale}
		\int_{t}^{t+h} \|H_s\|^2\de s  
		\leq C\lipschitz{\Diff U}^2 \supremum{\sigma}^2  h\, \Big( h^2 + \sup_{s\in[t,t+h]} \|W_s - W_{t}\|^2 \Big).
   	\end{equation}
   	With \eqref{eqn:GirsanovBM} from Lemma~\ref{lemma:Girsanov} we bound the random sum of maximal Brownian increments\footnote{
        Applied to $\tau_1=\XcExit^+, \tau_2=\StoppD$ and $\psi = \Psi \big(1 + \big\langle \E_\circ^\prob [ \Psi \big] \big\rangle_{\StoppD} \big)$. Notice $\psi\in L^1$ by Cauchy-Schwarz and Lemma~\ref{lemma:IntegrabilityQV_ClosedMartingale}
    }
   	\begin{equation}
   		\E \Big[ \Psi \big(1 + \big\langle \E_\circ^\prob [ \Psi \big] \big\rangle_{\StoppD} \big) \sum_{t \in [\XcExit^+,\StoppD)} \sup_{s \in [t,t+h]}  \| W_s-W_{t} \|^2 \Big] \leq C \E \big[  \GirsanovG{} (\StoppD - \XcExit^+) \big] 
   	\end{equation}
   	where
   	\begin{equation}\label{Def:Girsanov_G}
   		\GirsanovG{} \defined \Psi \big(1 + \big\langle \E_\circ^\prob [\Psi \big] \big\rangle_{\StoppD} \big) \Big( 1 + \Big\langle \E_{\circ} \big[ \Psi \big(1 + \big\langle \E_\circ^\prob [ \Psi \big] \big\rangle_{\StoppD} \big) \big] \Big\rangle_{\StoppD} \Big)
   	\end{equation}
   	\newpage	
   	Combine this with \eqref{eqnProofLossRateExpectationOfIntegrandForMartingale} to conclude
    \[
        \E \Big[ \Psi\!\!\sum_{ t\in\unboundedTimeGrid\cap[\XcExit^+,\StoppD) }\!\! |\M_{t+h} - \M_{t}|^2\Big] \leq 
        C h \E \Big[ h^2  \Psi \big(1 + \big\langle \E_\circ^\prob [ \Psi \big] \big\rangle_{\StoppD} \big) (\StoppD - \XcExit^+) + \GirsanovG{} (\StoppD - \XcExit^+) \Big] . \qedhere
    \]
\end{proof}
%
%
%
We derive an alternative representation of $\lossSPre{}(u)$ (cf. \eqref{eqn:AlternativeRepresentationLossPost} for $\lossSPost{}(\nn)$). Utilizing the exact solution $u$ and that $\Xc$ only appears up to the first exit, we achieve a better rate.
Towards this we use exactly the same diagonalization argument as in the proof of \cite[Lemma A.1]{Knochenhauer2021ConvergenceRates}. Let $t\in\unboundedTimeGrid$;
\begin{equation}
	\begin{aligned}
		\Delta M_t^{\XcExit} & \defined \int_{t\wedge\XcExit}^{(t+h)\wedge\XcExit} \big( [\Diff^2\! u\sigma](\Xc_{t\wedge\XcExit})(W_s-W_{t\wedge\XcExit}) \big)^\top \sigma(\Xc_{t\wedge\XcExit}) \de W_s \\
		& = \tfrac{1}{2} (W_{(t+h)\wedge\XcExit}-W_{t\wedge\XcExit})^\top [\sigma^\top\! \Diff^2\! u\sigma](\Xc_{t\wedge\XcExit})(W_{(t+h)\wedge\XcExit}-W_{t\wedge\XcExit}) \\
		& \hspace*{0.5cm} - \tfrac{1}{2} \trace [ \sigma^\top\! \Diff^2\! u\sigma](\Xc_{t\wedge\XcExit}) ( (t+h)\wedge\XcExit - t\wedge\XcExit)
	\end{aligned}
\end{equation}
and thus we can express
\begin{equation}
	\begin{aligned}
		\lossSPre{t}(u) & = u(\Xc_{(t+h)\wedge\XcExit}) - u(\Xc_{t\wedge\XcExit}) 
		- \Delta M_t^{\XcExit}\\
		& \hspace*{0.5cm} + f \big(\Xc_{t\wedge\XcExit}, u(\Xc_{t\wedge\XcExit}), [\sigma^\top\! \Diff\! u] (\Xc_{t\wedge\XcExit}) \big) ( (t+h)\wedge\XcExit - t\wedge\XcExit ) \\
		& \hspace*{1.0cm} - \Big[ [\Diff\! u^\top\! \sigma] + ((t+h)\wedge\XcExit - t\wedge\XcExit) [\mu^\top\! \Diff^2\! u\sigma] \Big] (\Xc_{t\wedge\XcExit})\ (W_{(t+h)\wedge\XcExit}-W_{t\wedge\XcExit}) \\
	\end{aligned}
\end{equation}
Analogously to \eqref{eqnProofLossBoundItoRepresentationOfContinuation} with It\=o's formula we have 
\begin{equation}
	\begin{aligned}
		u(\Xc_{(t+h)\wedge\XcExit}) - u(\Xc_{t\wedge\XcExit}) & = \int_{t\wedge\XcExit}^{(t+h)\wedge\XcExit} \Diff\! u(\Xc_s)^\top\! \sigma (\Xc_{t}) \de W_s \\
		& \hspace*{0.5cm} + \int_{t\wedge\XcExit}^{(t+h)\wedge\XcExit} \Diff\! u(\Xc_s)^\top\! \mu (\Xc_{t}) + \tfrac{1}{2} \trace \big( [\sigma\sigma^\top](\Xc_{t}) \Diff^2\! u(\Xc_s)\big) \de s
	\end{aligned}
\end{equation}
and especially since $u$ solves the PDE \eqref{eqnPDE} it holds that
\begin{equation}
	f \big(\Xc_{t\wedge\XcExit}, u(\Xc_{t\wedge\XcExit}), [\sigma^\top\! \Diff\! u] (\Xc_{t\wedge\XcExit}) \big) ( (t+h)\wedge\XcExit - t\wedge\XcExit ) = \int_{t\wedge\XcExit}^{(t+h)\wedge\XcExit} -\InfGen [u](\Xc_{t\wedge\XcExit}) \de s.
\end{equation}
Combine the above to establish $\lossSPre{t} (u) = I_t+II_t+III_t$ in terms of an integral representation;
\begin{equation}\label{eqn:I_II_III_bound_splitt}
	\begin{aligned}
		\hspace*{-0.2cm}
		I_t + II_t + III_t 
		& \defined \int_{t\wedge\XcExit}^{(t+h)\wedge\XcExit} \Diff\! u(\Xc_s)^\top \sigma (\Xc_{t}) - \Diff\! u(\Xc_{t\wedge\XcExit})^\top\sigma(\Xc_{t\wedge\XcExit}) \\
		& \hspace*{0.5cm} -\Diff^2\! u(\Xc_{t\wedge\XcExit})\big(\mu(\Xc_{t\wedge\XcExit})h+\sigma(\Xc_{t\wedge\XcExit}) (W_s-W_{t\wedge\XcExit}) \big)^\top \sigma(\Xc_{t\wedge\XcExit}) \de W_s \\
		& \hspace*{0.25cm} + \int_{t\wedge\XcExit}^{(t+h)\wedge\XcExit} \Diff\! u(\Xc_{s})^\top\mu(\Xc_{t}) - [\Diff\! u^\top\mu] (\Xc_{t\wedge\XcExit}) \\
		& \hspace*{0.5cm} + \tfrac{1}{2} \Big(\trace \big( [\sigma\sigma^\top](\Xc_{t})\Diff^2\! u(\Xc_{s}) \big) - \trace [\sigma\sigma^\top\Diff^2\! u] (\Xc_{t\wedge\XcExit}) \Big) \de s  \\
		& \hspace*{0.25cm} + \big(h - ((t+h)\wedge\XcExit-t\wedge\XcExit) \big) \int_{t\wedge\XcExit}^{(t+h)\wedge\XcExit} [\mu^\top\Diff^2\! u\sigma] (\Xc_{t\wedge\XcExit}) \de W_s
	\end{aligned}
\end{equation}
where the identification is to be understood per integral. With an elementary bound we have
$ |\lossSPre{t} (u)|^2 \leq 4 ( |I_t|^2 + |II_t|^2 + |III_t|^2 ) $ 
and within the following results we further bound this. 
%
\begin{lemma}\label{lemma:proofLossRateExactMartingaleRate}
    Let $\Psi \in L^4$ denote a random variable bounded away from zero.
	There is a constant $C>0$ (not depending on $h$) and a random variable $\GirsanovK{}$ (see \eqref{Def:Girsanov_K}) such that
	\begin{equation}
		\E \Big[ \Psi \sum_{t\in\unboundedTimeGrid\cap[0,\XcExit)} \big|I_t\big|^2\Big] \leq C h^2 \Big( h +  \E \big[ \GirsanovK{} \XcExitPlus \big] \Big). \closeEqn
	\end{equation}
\end{lemma}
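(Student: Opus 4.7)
The plan is to view $I_t$ as the increment over $[t\wedge\XcExit,(t+h)\wedge\XcExit]$ of a single Itô martingale defined up to $\XcExit$, bound the $\Psi$-weighted sum of squared increments via the martingale version \eqref{eqn:GirsanovMart} of Lemma~\ref{lemma:Girsanov}, and then exploit that the integrand is a second-order Taylor remainder in $\Diff u$ plus a small time-mismatch term to extract the stated rate.

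First I would rearrange the integrand of $I_t$ on $\{t<\XcExit\}$. There $\Xc_{t\wedge\XcExit}=\Xc_t$, and $\Xc_s-\Xc_t=\mu(\Xc_t)(s-t)+\sigma(\Xc_t)(W_s-W_t)$ for $s\in[t,(t+h)\wedge\XcExit]$, so the integrand decomposes as
\[
A_s = \bigl[\Diff u(\Xc_s) - \Diff u(\Xc_t) - \Diff^2\!u(\Xc_t)(\Xc_s-\Xc_t)\bigr]^{\!\top}\!\sigma(\Xc_t) + \bigl[\Diff^2\!u(\Xc_t)\mu(\Xc_t)(s-t-h)\bigr]^{\!\top}\!\sigma(\Xc_t).
\]
Since $u\in\HoelderSpace{3}{\gamma}(\overline{\domain})$, $\Diff^2\!u$ is Lipschitz on $\overline\domain$, whence a first-order Taylor estimate for $\Diff u$ about $\Xc_t$ combined with $|s-t-h|\leq h$ and the uniform bounds on $\mu,\sigma,\Diff^2\!u$ yields the pathwise estimate $\|A_s\|^2 \leq C(\|\Xc_s-\Xc_t\|^4+h^2)$.

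Next I would stitch the per-interval integrands into one predictable process $H$ on $[0,\XcExit]$, exactly as in the proof of Lemma~\ref{lemma:proofTriangulusMartingaleRate}. Since $\Diff^2\!u,\mu,\sigma$ are bounded on $\overline\domain$, $H$ is bounded on $[0,\StoppD]$, so $r\mapsto\int_0^{r\wedge\XcExit}H_s\de W_s$ is an $L^2$-martingale whose increments along $\unboundedTimeGrid$ coincide with $I_t$ for $t\in\unboundedTimeGrid\cap[0,\XcExit)$. Applying \eqref{eqn:GirsanovMart} of Lemma~\ref{lemma:Girsanov} with $\tau_1=0$, $\tau_2=\XcExitPlus$ and $\psi=\Psi$ reduces matters to controlling
\[
C\,\E\Big[\Psi\bigl(1+\bigl\langle\E_\circ^\prob[\Psi]\bigr\rangle_{\XcExitPlus}\bigr)\int_0^{\XcExitPlus}\|H_s\|^2\de s\Big].
\]
Inserting the pathwise bound on $\|A_s\|^2$ and using Proposition~\ref{prop:PropertiesOfStoppedEuler} to pass from $\|\Xc_s-\Xc_t\|^4$ to $C(h^4+\|W_s-W_t\|^4)$ produces a random sum of fourth-power Brownian suprema (plus deterministic $h^2$ contributions integrated over $[0,\XcExitPlus]$). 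A second application of \eqref{eqn:GirsanovBM} of Lemma~\ref{lemma:Girsanov}, now with weight $\psi=\Psi(1+\langle\E_\circ^\prob[\Psi]\rangle_{\XcExitPlus})$ and exponent $q=4$, produces exactly the nested Girsanov structure defining $\GirsanovK{}$ in \eqref{Def:Girsanov_K}. Collecting the powers of $h$ from the two Girsanov applications together with the outer $h$ from the Lebesgue step delivers the claimed bound $Ch^2(h+\E[\GirsanovK{}\XcExitPlus])$.

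The main obstacle is the bookkeeping inside the integrand: the Taylor piece of $A_s$ must be genuinely second-order in $\Xc_s-\Xc_t$ (so its square is quartic and survives the $q=4$ application of \eqref{eqn:GirsanovBM}), while the time-mismatch piece $\Diff^2\!u\,\mu\,(s-t-h)$ must be uniformly $O(h)$. Any slack at either point would cost a factor of $h$ and destroy the $h^2$ prefactor. The only other point requiring care is verifying that $H$ is a bona fide $L^2$-martingale on $[0,\XcExit]$, which reduces to the boundedness of $\Diff^2\!u,\mu,\sigma$ on $\overline\domain$ together with $|s-t-h|\leq h$.
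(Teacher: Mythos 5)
Your proposal is correct and follows essentially the same route as the paper: the same martingale-increment representation of $I_t$ via a stitched predictable integrand $H$, the same split of the integrand into the second-order Taylor remainder of $\Diff u$ plus the $O(h)$ time-mismatch term $\Diff^2\!u\,\mu\,(s-t-h)$, and the same two applications of Lemma~\ref{lemma:Girsanov} (first \eqref{eqn:GirsanovMart} with $\psi=\Psi$, then \eqref{eqn:GirsanovBM} with $q=4$ and the weight $\Psi\bigl(1+\langle\E_{\circ}^{\prob}[\Psi]\rangle_{\XcExitPlus}\bigr)$), yielding exactly $\GirsanovK{}$ from \eqref{Def:Girsanov_K}. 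The only difference is cosmetic (you rewrite the integrand in terms of $\Xc_s-\Xc_t$ before stitching, while the paper stitches first and invokes \eqref{eqnTaylorSpaceDeriv} inside the integral), and your integrability remark for the second weight is covered by $\Psi\in L^4$ together with Lemma~\ref{lemma:IntegrabilityQV_ClosedMartingale}, as in the paper's footnote.
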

\begin{proof}
    Let $t\in\unboundedTimeGrid$. By definition (see \eqref{eqn:I_II_III_bound_splitt}) on ${\{t<\XcExit\}}$ it holds that
	\begin{equation}
		\begin{aligned}
			I_t & = \int_{t\wedge\XcExit}^{(t+h)\wedge\XcExit} \big\{ \Diff\! u(\Xc_{s}) - \Diff\! u (\Xc_{t\wedge\XcExit}) \\
			& \hspace*{2.5cm} - \Diff^2\! u(\Xc_{t\wedge\XcExit}) \big(\mu(\Xc_{t\wedge\XcExit})h + \sigma(\Xc_{t\wedge\XcExit}) (W_s-W_{t\wedge\XcExit}) \big) \big\}^\top \sigma (\Xc_{t\wedge\XcExit}) \de W_s
		\end{aligned}
	\end{equation}
	Consider the previsible process
	\begin{equation}
		\begin{aligned}
			H_s & \defined \sum_{\ell\in\unboundedTimeGrid} \1_{\{s\in(\ell, \ell+h]\} \cap \{s<\XcExit\}} \big\{ \Diff\! u(\Xc_s)-\Diff\! u(\Xc_{{\ell}}) \\
			& \hspace*{4.0cm} - \Diff^2\! u(\Xc_{{\ell}})\big(\mu(\Xc_{{\ell}})h+\sigma(\Xc_{{\ell}}) (W_s-W_{\ell}) \big) \big\}^\top \sigma(\Xc_{{\ell}}) \quad s\geq 0
		\end{aligned}
	\end{equation}
	Observe $H$ is bounded, thus $ \M \defined \int_0^{\cdot} H_s \de W_s $ is a uniformly integrable martingale and  
	%
	by choice $I_t = \M_{t+h} - \M_{t}$. With the bound \eqref{eqn:GirsanovMart} from Lemma~\ref{lemma:Girsanov};\footnote{
        Applied to $\tau_1=0, \tau_2=\XcExit$ and $\psi = \Psi $.
    }
	\begin{equation}
		\E \Big[ \Psi\!\! \sum_{t\in\unboundedTimeGrid\cap[0,\XcExit)}\!\! | I_t |^2 \Big] = \E \Big[ \Psi\!\! \sum_{t\in\unboundedTimeGrid\cap[0,\XcExit^+)}\!\! | \M_{t+h}-\M_{t} |^2 \Big] 
        \leq C \Big[ \Psi \big(1 + \big\langle \E_{\circ}^{\prob} [ \Psi ] \big\rangle_{\XcExit^+} \big) \int_{0}^{\XcExitPlus} \| H_s \|^2 \de s \Big]
	\end{equation}
	We use the Taylor bound \eqref{eqnTaylorSpaceDeriv}, Proposition~\ref{prop:PropertiesOfStoppedEuler} and the fact that $\Diff^2\!u, \sigma, \mu$ are bounded on $\overline{\domain}$ to compute on the event $\{t\in\unboundedTimeGrid\cap[0,\XcExitPlus)\}$;
	\begin{equation}
		\begin{aligned}
			\int_{t}^{t+h} \|H_s\|^2 \de s & \leq 2\supremum{\sigma}^2 \int_{t}^{(t+h)\wedge\XcExit} \big\|\Diff\! u(\Xc_s)-\Diff\! u(\Xc_{t})-\Diff^2\! u(\Xc_{t})(\Xc_s-\Xc_{t})\big\|^2 \\
			&  \hspace*{4.0cm} +\big\|\Diff^2\! u(\Xc_{t})\mu(\Xc_{t})\big((t+h)\wedge\XcExit-s\big)\big\|^2\de s \\
			& \leq C \int_{t}^{(t+h)\wedge\XcExit} h^2 + \|\Xc_s-\Xc_{t}\|^4 \de s\\
			& \leq Ch \big(h^2 + \sup_{s\in[t,t+h]}\| W_s- W_{t}\|^4\big)
		\end{aligned}
	\end{equation}
	We apply the bound \eqref{eqn:GirsanovBM} from Lemma~\ref{lemma:Girsanov}\footnote{
        Applied to $\tau_1=0, \tau_2 = \XcExit^+$ and $\psi = \Psi \big\langle \E_{\circ}^{\prob} [ \Psi ] \big\rangle_{\XcExit^+}$. Where $\psi\in L^1$ due to Cauchy-Schwarz and Lemma~\ref{lemma:IntegrabilityQV_ClosedMartingale}.
    }
    and obtain
	\begin{equation*}
		\E \Big[ \Psi \big(1 + \big\langle \E_{\circ}^{\prob} [ \Psi ] \big\rangle_{\XcExit^+} \big) \sum_{t\in\unboundedTimeGrid\cap [0,\XcExitPlus)} \sup_{s \in [t,t+h]}  \| W_s-W_{t} \|^4 \Big] \leq C h \E \big[ \GirsanovK{} \XcExitPlus \big]
	\end{equation*}
    where (cf. $\GirsanovG{}$ see \eqref{Def:Girsanov_G})
    \begin{align}\label{Def:Girsanov_K}
        \GirsanovK{} \defined \Psi \big(1 + \big\langle \E_\circ^\prob [\Psi \big] \big\rangle_{\XcExitPlus} \big) \Big( 1 + \Big\langle \E_{\circ} \big[ \Psi \big(1 + \big\langle \E_\circ^\prob [ \Psi \big] \big\rangle_{\XcExitPlus} \big) \big] \Big\rangle_{\XcExitPlus}^2 \Big) 
    \end{align}
    Combine everything to conclude the claim. 
\end{proof}
\begin{lemma}\label{lemma:ProofPropLossRateExact}
    Let $\Psi \in L^1$ be bounded away from zero.
	There is $C>0$ (not depending on $h$);
	\begin{equation*}
		\E \Big[ \Psi \sum_{t\in\unboundedTimeGrid\cap[0,\XcExit)}\big|II_t\big|^2\Big] \leq Ch^2 \E \Big[ \Psi \big(1 + \big\langle \E_{\circ}^{\prob} [ \Psi ] \big\rangle_{\XcExit^+} \big) \XcExitPlus\Big]. \closeEqn
	\end{equation*}    
\end{lemma}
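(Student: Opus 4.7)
The plan is to bound each $|II_t|^2$ pathwise on the event $\{t<\XcExit\}$ and to apply the Girsanov-type tool of Lemma~\ref{lemma:Girsanov}. On that event $t\wedge\XcExit = t$, so $\Diff\! u(\Xc_t)^\top \mu(\Xc_t) = [\Diff\! u^\top \mu](\Xc_t)$ and $\trace([\sigma\sigma^\top](\Xc_t)\Diff^2\! u(\Xc_t)) = \trace[\sigma\sigma^\top\Diff^2\! u](\Xc_t)$, and the integrand of $II_t$ in \eqref{eqn:I_II_III_bound_splitt} reduces to
\[
\bigl(\Diff\! u(\Xc_s) - \Diff\! u(\Xc_t)\bigr)^\top \mu(\Xc_t) + \tfrac{1}{2}\trace\bigl([\sigma\sigma^\top](\Xc_t)\bigl(\Diff^2\! u(\Xc_s) - \Diff^2\! u(\Xc_t)\bigr)\bigr).
\]
Because $u\in\HoelderSpace{3}{\gamma}(\overline{\domain})$ by Theorem~\ref{thm:ExistenceAndUniqueness}, the derivatives $\Diff\! u$ and $\Diff^2\! u$ are Lipschitz on $\overline{\domain}$, and combined with the boundedness of $\mu$ and $\sigma\sigma^\top$ on $\overline{\domain}$ this bounds the integrand by $C\|\Xc_s - \Xc_t\|$.

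Next I apply the Cauchy--Schwarz inequality to the Lebesgue integral on the random interval $[t, (t+h)\wedge\XcExit]$ of length at most $h$, obtaining $|II_t|^2 \leq Ch \int_t^{(t+h)\wedge\XcExit} \|\Xc_s - \Xc_t\|^2 \de s$. Since $\{t<\XcExit\}\subset\{t<\StoppD\}$, Proposition~\ref{prop:PropertiesOfStoppedEuler} then yields the pathwise estimate
\[
|II_t|^2 \leq C\bigl(h^4 + h^2 \sup_{s\in[t,t+h]}\|W_s - W_t\|^2\bigr).
\]
Summing over $t\in\unboundedTimeGrid\cap[0,\XcExit)$ and using $h \cdot \#(\unboundedTimeGrid\cap[0,\XcExit)) = \XcExitPlus$ converts the first term into $Ch^3 \XcExitPlus$.

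For the Brownian term, I invoke \eqref{eqn:GirsanovBM} of Lemma~\ref{lemma:Girsanov} with $q=2$, $\tau_1 = 0$, $\tau_2 = \XcExitPlus$ and $\psi = \Psi$; the exponent $h^{q/2-1} = 1$ then gives $C\E\bigl[\Psi\bigl(1 + \langle\E_\circ^\prob[\Psi]\rangle_{\XcExit^+}\bigr)\XcExitPlus\bigr]$ as the prefactor to $h^2$. The leftover $Ch^3\E[\Psi\XcExitPlus]$ is absorbed into this using $h\leq 1$ and $1\leq 1 + \langle\E_\circ^\prob[\Psi]\rangle_{\XcExit^+}$, which gives the claimed bound. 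I do not expect any significant obstacle here: since $II_t$ is an ordinary integral rather than a stochastic one, no martingale machinery is needed (in contrast to Lemma~\ref{lemma:proofLossRateExactMartingaleRate}), and in particular no iterated quadratic-variation factor appears — this is why only a single factor $\langle\E_\circ^\prob[\Psi]\rangle_{\XcExit^+}$ (rather than a square) shows up in the bound. The main bookkeeping task is simply ensuring that every step is carried out on $\{t<\XcExit\}$, where both the PDE and Proposition~\ref{prop:PropertiesOfStoppedEuler} are applicable.
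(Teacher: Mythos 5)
Your proposal is correct and follows essentially the same route as the paper: a pathwise bound of the integrand via the Lipschitz continuity of $\Diff u$ and $\Diff^2 u$ on $\overline{\domain}$, Cauchy--Schwarz on the (at most length-$h$) integration interval, Proposition~\ref{prop:PropertiesOfStoppedEuler} to control $\|\Xc_s-\Xc_t\|$, and then \eqref{eqn:GirsanovBM} of Lemma~\ref{lemma:Girsanov} with $q=2$, $\tau_1=0$, $\tau_2=\XcExitPlus$, $\psi=\Psi$. Your additional bookkeeping (absorbing the $Ch^3\E[\Psi\XcExitPlus]$ term and using $\unboundedTimeGrid\cap[0,\XcExit)=\unboundedTimeGrid\cap[0,\XcExitPlus)$) matches the paper's treatment.
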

\begin{proof}
	Let $t\in\unboundedTimeGrid$ and $s\in[t,t+h]$. On $\{s\leq\XcExit\}$ with the Lipschitz continuity of $\Diff\! u$ we have
	\begin{equation}
		\big| \Diff\! u(\Xc_s) -\Diff\! u(\Xc_{t} )\big)^\top\! \mu(\Xc_{t}) \big| \leq \supremum{\mu}\lipschitz{\Diff\! u} \| \Xc_{s}-\Xc_{t} \|
	\end{equation}
	and using the trace as inner product on quadratic matrices we estimate
	\begin{align}
		\big| \trace \big( [\sigma\sigma^\top] (\Xc_{t}) ( \Diff^2\! u(\Xc_s)-\Diff^2\! u(\Xc_{t}) ) \big) \big| \leq \supremum{\sigma}^2 \lipschitz{\Diff^2\! u} \| \Xc_{s}-\Xc_{t} \|.
	\end{align}
	Using Cauchy-Schwarz in combination with the previous estimates yields 
	\begin{equation}
			|II_t|^2 
			\leq C ((t+h)\wedge\XcExit-t\wedge\XcExit) \int_{t\wedge\XcExit}^{(t+h)\wedge\XcExit}\!\! \| \Xc_{s}-\Xc_{t} \|^2 \de s 
			\leq C h^2 \big( h^2 + \sup_{r\in [t,t+h]} \|W_r - W_{t}\|^2 \big)
	\end{equation}
	where the last step is due to Proposition~\ref{prop:PropertiesOfStoppedEuler}. With the bound \eqref{eqn:GirsanovBM} of Lemma~\ref{lemma:Girsanov}\footnote{
        Applied to $\tau_1=0, \tau_2=\XcExitPlus$ and $\psi = \Psi$.
    }
    we conclude 
	\begin{align}
		\E \Big[ \Psi \sum_{t\in\unboundedTimeGrid\cap[0,\XcExitPlus)} \sup_{s \in [t,t+h]}  \| W_{s}-W_{t} \|^2 \Big] & \leq C \E \Big[ \Psi \big(1 + \big\langle \E_{\circ}^{\prob} [ \Psi ] \big\rangle_{\XcExit^+} \big) \XcExitPlus\Big]. \qedhere
	\end{align}
\end{proof}
\begin{lemma}\label{lemma:ProofPropLossRateResidue}
    Let $\Psi \in L^1$ be bounded away from zero.
	There is $C>0$ (not depending on $h$);
	\begin{equation}
		\E \Big[ \exp(C\bar\tau) \sum_{t\in\unboundedTimeGrid\cap[0,\XcExit)}\big|III_t\big|^2\Big] \leq Ch^2 \E \Big[ \Psi \big(1 + \big\langle \E_{\circ}^{\prob} [ \Psi ] \big\rangle_{\XcExit^+} \big) \XcExitPlus\Big]. \closeEqn
	\end{equation}    
\end{lemma}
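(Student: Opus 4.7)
The plan is to exploit a structural observation: $III_t$ is non-zero on $\{t < \XcExit\}$ only on the single discrete-time step straddling $\XcExit$. Once this is established, the remainder of the argument mirrors the pattern of Lemma~\ref{lemma:ProofPropLossRateExact}, reducing to the Girsanov-type bound \eqref{eqn:GirsanovBM} of Lemma~\ref{lemma:Girsanov}. (Note: on the LHS I read $\Psi$ instead of $\exp(C\bar\tau)$, matching the adjacent lemmas.)

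First I would pull the integrand out of the stochastic integral in the definition of $III_t$ (see \eqref{eqn:I_II_III_bound_splitt}) and write
\begin{equation*}
    III_t = \bigl(h - ((t+h)\wedge\XcExit - t\wedge\XcExit)\bigr)\,[\mu^\top\Diff^2\! u\sigma](\Xc_{t\wedge\XcExit})\,\bigl(W_{(t+h)\wedge\XcExit} - W_{t\wedge\XcExit}\bigr),
\end{equation*}
valid because the integrand is $\F_{t\wedge\XcExit}$-measurable and constant in $s$. A case analysis on the prefactor then shows that on $\{t < \XcExit\}$ the quantity $h - ((t+h)\wedge\XcExit - t)$ vanishes whenever $\XcExit \geq t+h$; hence $III_t \neq 0$ only on $\{t < \XcExit < t+h\}$, which singles out the unique grid point $t^\star = \XcExitPlus - h$ located immediately before $\XcExit$.

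On this exceptional step, the boundedness of $\mu$, $\sigma$ and $\Diff^2 u$ on $\overline{\domain}$ together with $h - (\XcExit - t) \leq h$ yield $|III_t|^2 \leq C h^2\, \sup_{s\in[t,t+h]}\|W_s - W_t\|^2$. Since at most one summand is non-zero and the right-hand side is non-negative, I can freely enlarge the summation range to $\unboundedTimeGrid \cap [0,\XcExitPlus)$, obtaining
\begin{equation*}
    \sum_{t\in\unboundedTimeGrid\cap[0,\XcExit)}|III_t|^2 \;\leq\; Ch^2 \sum_{t\in\unboundedTimeGrid\cap[0,\XcExitPlus)} \sup_{s\in[t,t+h]}\|W_s - W_t\|^2.
\end{equation*}

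Multiplying by $\Psi$, taking expectations, and applying estimate \eqref{eqn:GirsanovBM} of Lemma~\ref{lemma:Girsanov} with $\tau_1 = 0$, $\tau_2 = \XcExitPlus$ and $\psi = \Psi$ will deliver the claimed bound in the form $Ch^2\, \E[\Psi(1+\langle\E_\circ^\prob[\Psi]\rangle_{\XcExitPlus})\XcExitPlus]$. I do not anticipate any genuine obstacle here: the only conceptual content is the one-step support of $III_t$, and everything else is exactly the same Girsanov-type control of maximal Brownian increments already employed in Lemmas~\ref{lemma:proofLossRateExactMartingaleRate} and~\ref{lemma:ProofPropLossRateExact}.
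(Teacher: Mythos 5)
Your proposal is correct and follows essentially the same route as the paper: in both cases one pulls the ($\F_{t\wedge\XcExit}$-measurable, constant) integrand out of the stochastic integral, uses the boundedness of $\mu,\sigma,\Diff^2 u$ on $\overline{\domain}$ together with $|h-((t+h)\wedge\XcExit-t\wedge\XcExit)|\le h$ to get $|III_t|^2\le C h^2\,\|W_{(t+h)\wedge\XcExit}-W_{t\wedge\XcExit}\|^2\le C h^2\sup_{s\in[t,t+h]}\|W_s-W_t\|^2$, and then applies \eqref{eqn:GirsanovBM} with $\tau_1=0$, $\tau_2=\XcExitPlus$, $\psi=\Psi$ (and your reading of the left-hand side weight as $\Psi$ rather than $\exp(C\bar\tau)$ matches the way the lemma is actually used). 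Your extra observation that $III_t$ vanishes except on the single grid step straddling $\XcExit$ is true but neither needed nor exploited; the paper simply uses the uniform bound on the prefactor for every $t$ in the sum.
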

\begin{proof}
	Let $t\in\unboundedTimeGrid$. Observe that we have
	$| III_t|^2 \leq h^2 \supremum{\mu}^2\supremum{\Diff^2\! u}^2\supremum{\sigma}^2 \| W_{(t+h)\wedge\XcExit} - W_{t\wedge\XcExit}\|^2 $
	and thus we obtain the claim using the bound \eqref{eqn:GirsanovBM} from Lemma\ref{lemma:Girsanov} as in the previous proof.
\end{proof}
\bibliographystyle{plain}

\appendix
\section{Supplements}\label{sec:CombinedSupplements}
%
\begin{definition}[Hölder Space]\label{Definition Hoelder Space}
	Let $k \in \nat_0, \gamma \in (0,1]$. The Hölder space $\HoelderSpace{k}{\gamma} (\domain)$ is given by
	\begin{equation}
		\HoelderSpace{k}{\gamma} (\domain) \defined \big\{f\in\C^k(\domain) \,\big|\, \|f\|_{ \HoelderSpace{k}{\gamma} (\domain)}<\infty\big\}
	\end{equation}
	where
	\begin{equation*}
		\|f\|_{ \HoelderSpace{k}{\gamma} (\domain) } \defined \sum_{|\beta|\leq k} {\|D^\beta f\|}_\infty + \sum_{|\beta|=k} \langle D^\beta f\rangle_\gamma\quad\text{and}\quad \langle h \rangle_\gamma \defined \sup_{x,y\in\domain} \frac{|h(x)-h(y)|}{\|x-y\|^\gamma}. \closeEqn
	\end{equation*}
\end{definition}
\begin{lemma}
	For any $U\colon\overline{\domain}\rightarrow\R$ of class $\C^3$ there is a constant $C>0$ such that
	\begin{align}
		\bigl|U(x_1)-U(x_2)-\Diff\! U(x_2)^\top(x_1-x_2) \hspace*{4.0cm} & \\
		- \frac{1}{2}(x_1-x_2)^\top\Diff^2\! U(x_2)(x_1-x_2)\bigr| & \leq C\|x_1-x_2\|^3 \label{eqnTaylorSecondSpace} \\
		\bigl\|\Diff\! U(x_1)-\Diff\! U(x_2) - \Diff^2\! U(x_2)(x_1-x_2)\bigr\| &\leq C\|x_1-x_2\bigr\|^2 \label{eqnTaylorSpaceDeriv} 
	\end{align}
	for all $x_1,x_2\in\overline{\domain}\subset\R^d$.\close
\end{lemma}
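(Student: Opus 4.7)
The plan is to reduce this to the classical Taylor expansion with integral remainder, using a smooth extension to handle the fact that $\overline{\domain}$ need not be convex and that Taylor expansion is most conveniently stated along straight line segments.

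First I would invoke the smooth extension lemma (the paper already cites [Lemma 2.20]{LeeSmoothManifolds} for this purpose) to obtain a compactly supported $\widetilde U\in\C_c^3(\R^d)$ with $\widetilde U|_{\overline{\domain}}=U$. Since $\widetilde U$ has compact support, set $M\defined \max_{0\le k\le 3}\sup_{y\in\R^d}\|\Diff^k\widetilde U(y)\|<\infty$. For arbitrary $x_1,x_2\in\overline{\domain}$ the segment $\gamma(t)\defined x_2+t(x_1-x_2)$, $t\in[0,1]$, lies in $\R^d$ (not necessarily in $\overline{\domain}$, which is precisely why the extension matters), and $\varphi(t)\defined \widetilde U(\gamma(t))$ is of class $\C^3$ on $[0,1]$ with $\varphi(0)=U(x_2)$ and $\varphi(1)=U(x_1)$.

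For the first estimate \eqref{eqnTaylorSecondSpace}, apply Taylor's theorem with integral remainder to $\varphi$ at $t=0$ up to order two:
\begin{equation*}
    \varphi(1)=\varphi(0)+\varphi'(0)+\tfrac{1}{2}\varphi''(0)+\tfrac{1}{2}\int_0^1(1-t)^2\varphi'''(t)\,\de t.
\end{equation*}
By the chain rule $\varphi'(0)=\Diff\! U(x_2)^\top(x_1-x_2)$, $\varphi''(0)=(x_1-x_2)^\top\Diff^2\! U(x_2)(x_1-x_2)$, and $\varphi'''(t)=\Diff^3\widetilde U(\gamma(t))[x_1-x_2,x_1-x_2,x_1-x_2]$, which is bounded in absolute value by $M\|x_1-x_2\|^3$ uniformly in $t$. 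This yields the claim with $C=\tfrac{M}{2}\cdot\tfrac{1}{3}=\tfrac{M}{6}$ (after evaluating $\int_0^1(1-t)^2\,\de t$).

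For the gradient estimate \eqref{eqnTaylorSpaceDeriv}, the same argument applied componentwise to the $\C^2$ functions $\partial_i\widetilde U$, $i=1,\dots,d$, gives
\begin{equation*}
    \partial_i\widetilde U(x_1)-\partial_i\widetilde U(x_2)-\Diff\!\partial_i\widetilde U(x_2)^\top(x_1-x_2)=\int_0^1(1-t)\,\Diff^2(\partial_i\widetilde U)(\gamma(t))[x_1-x_2,x_1-x_2]\,\de t,
\end{equation*}
which is bounded by $\tfrac{M}{2}\|x_1-x_2\|^2$ pointwise. Assembling the $d$ components gives the claim with an adjusted constant (e.g.\ $C=\tfrac{\sqrt d\,M}{2}$). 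Since $\widetilde U$ and its first two derivatives agree with those of $U$ on $\overline{\domain}$, both bounds pass from $\widetilde U$ back to $U$.

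There is essentially no technical obstacle here: the only mildly subtle point is that $\overline{\domain}$ need not be convex, which is why the extension step is required before invoking Taylor along the segment $\gamma$. Everything else is the textbook integral-remainder Taylor estimate together with compactness (boundedness of the third derivative of the extension).
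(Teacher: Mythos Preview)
Your argument is correct. The paper itself does not give a proof of this lemma; it simply writes ``For a proof we refer to e.g.\ \cite[Lemma~A.4]{Knochenhauer2021ConvergenceRates}.'' Your proposal supplies precisely the standard argument one would expect behind such a citation: extend $U$ to a compactly supported $\C^3$ function on $\R^d$ (the paper indeed invokes \cite[Lemma~2.20]{LeeSmoothManifolds} elsewhere for exactly this kind of extension, and the standing domain regularity assumption \hyperref[assumption:domain]{(Dom)} makes this legitimate), then apply Taylor's formula with integral remainder along the segment $\gamma$. Your observation that the extension is needed because $\overline{\domain}$ may fail to be convex is the only point worth flagging, and you handle it correctly.
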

\begin{proof}
	For a proof we refer to e.g.\ \cite[Lemma~A.4]{Knochenhauer2021ConvergenceRates}.
\end{proof}
\begin{lemma}[Dynamical loss triangle]\label{lemma:generalLossSummandTriangle}
	Let $\domain$ be a bounded domain, $\gamma\in(0,1]$ and $U\in\HoelderSpace{3}{\gamma} (\overline{\domain})$. Assume the maps $\mu, \sigma$ are bounded, $f$ is continuous and $\sigma$ is Lipschitz on $\overline{\domain}$. Let $x_1,x_2,x_3\in\overline{\domain}$ and $s_1<s_2$. Then for $s_3\in[s_1,s_2]$ there is a random variable $R = R (s_1,s_2,s_3;x_1,x_3)$ such that 
	\begin{equation}
		\mathcal{L}_{s_1,s_2} (\nn) (x_1,x_2) = \mathcal{L}_{s_1,s_3} (\nn) (x_1,x_3) + \mathcal{L}_{s_3,s_2} (\nn) (x_3,x_2) + R.
	\end{equation}
	Particularly for each $h\in(0,1)$, along paths of the interpolation $\Xc$ of the Euler-Maruyama scheme (see \eqref{def:EulerMaruyamaContinuousInterpolation}) for $t\in\unboundedTimeGrid$ on $\{\XcExit\in[t,t+h]\}$ the random variable $R \defined R(t,\XcExit, t+h; \Xc_{t}, \Xc_{\XcExit} )$ satisfies
	\begin{equation}
		\big| R(t,\XcExit, t+h; \Xc_{t}, \Xc_{\XcExit} ) \big|^2 \leq C \big( h^2 + \sup_{r,s\in[t,t+h]} \|W_s-W_r\|^4). \closeEqn
	\end{equation} 
\end{lemma}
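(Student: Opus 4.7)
The plan is to define $R$ by the stated equation and then verify the pathwise bound by decomposing the residue into a small number of tractable pieces.

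\textbf{Step 1 (algebraic identification).} Expanding the six rows of \eqref{defLossSummand_GeneralVersion} for each of the three loss summands and subtracting, the increments of $U$ telescope perfectly: $(U(x_2)-U(x_1))-(U(x_3)-U(x_1))-(U(x_2)-U(x_3))=0$, so $R$ is independent of $x_2$. Using the identity $W_{s_2}-W_{s_1}=(W_{s_3}-W_{s_1})+(W_{s_2}-W_{s_3})$ and the symmetry of $\sigma^\top\!\Diff^2\!U\sigma$ to expand the mixed and quadratic products, $R$ splits into five residues: (i) the $f$-mismatch $(s_2-s_3)\bigl[f(x_1,U(x_1),[\sigma^\top\!\Diff\! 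U](x_1))-f(x_3,U(x_3),[\sigma^\top\!\Diff\! U](x_3))\bigr]$; (ii) the gradient mismatch $-(W_{s_2}-W_{s_3})\cdot\bigl([\Diff\!U^\top\!\sigma](x_1)-[\Diff\!U^\top\!\sigma](x_3)\bigr)$; (iii) the $\mu$-mixed residues $-[\mu^\top\!\Diff^2\!U\sigma](x_1)\bigl[(s_3-s_1)(W_{s_2}-W_{s_3})+(s_2-s_3)(W_{s_3}-W_{s_1})\bigr]-\bigl([\mu^\top\!\Diff^2\!U\sigma](x_1)-[\mu^\top\!\Diff^2\!U\sigma](x_3)\bigr)(s_2-s_3)(W_{s_2}-W_{s_3})$; (iv) the quadratic residues $-(W_{s_3}-W_{s_1})^\top[\sigma^\top\!\Diff^2\!U\sigma](x_1)(W_{s_2}-W_{s_3})-\tfrac{1}{2}(W_{s_2}-W_{s_3})^\top\bigl([\sigma^\top\!\Diff^2\!U\sigma](x_1)-[\sigma^\top\!\Diff^2\!U\sigma](x_3)\bigr)(W_{s_2}-W_{s_3})$; and (v) the trace mismatch $\tfrac{1}{2}(s_2-s_3)\trace\bigl([\sigma^\top\!\Diff^2\!U\sigma](x_1)-[\sigma^\top\!\Diff^2\!U\sigma](x_3)\bigr)$. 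The first assertion of the lemma is now immediate.

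\textbf{Step 2 (pathwise bound).} I specialise to $s_1=t$, $s_3=\XcExit$, $s_2=t+h$, $x_1=\Xc_t$, $x_3=\Xc_{\XcExit}$ and write $W^*\defined\sup_{r,s\in[t,t+h]}\|W_s-W_r\|$. On the compact set $\overline{\domain}$ every coefficient appearing in \eqref{defLossSummand_GeneralVersion} is bounded by continuity, and $[\Diff\! U^\top\!\sigma]$ is additionally Lipschitz there (from $\sigma$ Lipschitz and $U\in\C^3$). Each residue from Step 1 carries at least one factor $(s_2-s_3)\leq h$ and/or two factors drawn from $\{h,W^*\}$; the only piece whose bound genuinely requires the Lipschitz property is (ii), for which Proposition~\ref{prop:PropertiesOfStoppedEuler} supplies $\|\Xc_t-\Xc_{\XcExit}\|\leq C(h+W^*)$, yielding $|\text{(ii)}|\leq CW^*(h+W^*)$. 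Squaring each residue and applying the elementary inequality $h^2(W^*)^2\leq\tfrac{1}{2}(h^4+(W^*)^4)\leq\tfrac{1}{2}(h^2+(W^*)^4)$ (valid for $h\leq 1$) collapses every contribution into the desired $C(h^2+(W^*)^4)$.

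The only genuine obstacle is the bookkeeping in Step 1: grouping the mismatches so that each one clearly picks up either a small time factor $\leq h$, or two small ``stochastic'' factors drawn from $\{W^*,\|\Xc_t-\Xc_{\XcExit}\|\}$. Step 2 is then a mechanical application of boundedness, the single Lipschitz estimate on $[\Diff\! U^\top\!\sigma]$, Proposition~\ref{prop:PropertiesOfStoppedEuler}, and AM--GM.
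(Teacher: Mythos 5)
Your proposal is correct and follows essentially the same route as the paper: your regrouped residues (i)–(v) sum to exactly the paper's explicit remainder \eqref{def:R_LossTriangle}, obtained in the same way by telescoping the $\nn$-increments, splitting $W_{s_2}-W_{s_1}$ at $s_3$, and using symmetry of the quadratic form. The pathwise bound likewise matches the paper's argument, namely boundedness of all coefficients on $\overline{\domain}$, the Lipschitz estimate only for $\Diff\!\nn^\top\sigma$ combined with Proposition~\ref{prop:PropertiesOfStoppedEuler}, and an elementary absorption of the cross term $h^2\sup_{r,s\in[t,t+h]}\|W_s-W_r\|^2$.
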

\begin{proof}
	For ease of presentation we introduce the following notation $ f_2\defined f (\cdot, \nn, \sigma^\top\! \Diff\! \nn) $ and
	\begin{equation}
		f_3 \defined \Diff\! \nn^\top\sigma, \quad 
		f_4\defined\mu^\top\! \Diff^2\!\nn\sigma, \quad 
		f_5\defined \tfrac{1}{2}\sigma^\top\! \Diff^2\!\nn\sigma, \quad 
		f_6=\tfrac{1}{2} \trace[\sigma^\top\! \Diff^2\!\nn\sigma]
	\end{equation}
	where the index of each map corresponds to a line of $\mathcal{L}_{s_1,s_2} (\nn)$ (see \eqref{defLossSummand_GeneralVersion}). Set
	\begin{align}\label{def:R_LossTriangle}
		& R \defined \big(f_2(x_1)-f_2(x_3)\big) (s_2-s_3) - \big( f_3(x_1)-f_3(x_3) \big)(W_{s_2}-W_{s_3}) \notag \\
		& \hspace*{1.0cm} - \Big( \big(f_4(x_1)-f_4(x_3)\big)(s_2-s_3) + f_4(x_1)(s_3-s_1) \Big)(W_{s_2}-W_{s_3}) \notag \\
		& \hspace*{8.0cm}  - f_4(x_1)(s_2-s_3)(W_{s_3}-W_{s_1}) \notag \\
		& \hspace*{1.5cm} -  (W_{s_2}-W_{s_3})^\top \Big(\big(f_5(x_1)-f_5(x_3)\big)(W_{s_2}-W_{s_3}) + f_5(x_1)(W_{s_3}-W_{s_1})\Big) \notag \\
		& \hspace*{8.0cm} - (W_{s_3}-W_{s_1})^\top f_5(x_1)(W_{s_2}-W_{s_3}) \notag \\
		& \hspace*{2.5cm} + \big(f_6(x_1)-f_6(x_3)\big)(s_2-s_3).
	\end{align}
	With a straightforward computation we observe that
	\begin{equation}
		\mathcal{L}_{s_1,s_2} (\nn) (x_1,x_2) = \mathcal{L}_{s_1,s_3} (\nn) (x_1,x_3) + \mathcal{L}_{s_3,s_2} (\nn) (x_3,x_2) + R
	\end{equation}
	and the first part of the claim is established. 
	The second part is a consequence of Proposition~\ref{prop:PropertiesOfStoppedEuler}. We use that $f_3$ is Lipschitz and that $f_2,f_4,f_5,f_6$ are bounded on $\overline{\domain}$ to obtain
	\[
		\big|R\big|^2 \leq C \Big( h^2 + \|\Xc_{t}-\Xc_{\XcExit} \|^2 \|W_{t+h}-W_{\XcExit}\|^2 + \sup_{r,s\in[t,t+h]} \|W_s-W_r\|^4 \Big). \qedhere
	\]
\end{proof}
\subsection*{Bounds based on Girsanov's Theorem}

\begin{lemma}\label{lemma:Girsanov}   
	Let $(H_s)_{s\geq0}$ be predictable such that $ \E [\int_0^\infty H_s^2 \de s ] < \infty $ and set 
	$ \M \defined \int_0^\cdot H_s \de W_s $.
	Let $\tau_1\leq\tau_2$ denote $\unboundedTimeGrid$-valued stopping times with $\tau_2 \in L^1$. Let $\psi$ be bounded away from zero and satisfy $\psi\in L^p$ for some $p\geq 1$.
	Then there is $\ell > 0$ (only depending on the lower bound of $\psi$) such that
	\begin{equation}\label{eqn:GirsanovMart}
		\E \Big[ \psi\!\! \sum_{ t\in\unboundedTimeGrid\cap[\tau_1,\tau_2) }\!\! |\M_{t+h} - \M_{t}|^2 \Big] \leq 2 \E \bigg[  \psi \big( 1 +\ell^{-2}  \big\langle\E_{\circ} [ \psi ] \big\rangle_{\tau_2} \big) \int_{\tau_1}^{\tau_2} \|H_r\|^2 \de r \bigg]
	\end{equation}
	Moreover for every $q\geq 1$ there is $C_q>0$ (only depending on the dimension $d$ and $q$) such that
	\begin{equation}\label{eqn:GirsanovBM}
		\E \Big[ \psi\!\! \sum_{t\in\unboundedTimeGrid\cap[\tau_1,\tau_2)} \sup_{s\in[t, t+h]} \|W_s -W_t\|^q \Big]
		\leq 2^{q-1} h^{\frac{q}{2}-1} \E \Big[ \psi \big(C_q + \ell^{-q} \big\langle \E_{\circ} [ \psi ] \big\rangle_{\tau_2}^{\frac{q}{2}} \big)\ (\tau_2-\tau_1) \Big]
	\end{equation}
	where $\ell>0$ is the same constant as in the first part of the statement. \close 
\end{lemma}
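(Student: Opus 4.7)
The plan is to apply a change of measure driven by the martingale associated to $\psi$, so as to absorb the weight into the Brownian motion. Let $N_t \defined \E[\psi\mid\F_t]$; since the filtration is Brownian, the martingale representation theorem yields a predictable $\eta$ with $N_t = \E[\psi] + \int_0^t \eta_s\,\de W_s$ and $\langle N\rangle_t = \int_0^t\|\eta_s\|^2\,\de s$. Because $\psi$ is bounded below by some $\ell>0$ the martingale $N$ inherits the same lower bound, so $\theta_s \defined \eta_s/N_s$ satisfies $\int_0^{\tau_2}\|\theta_s\|^2\,\de s \leq \ell^{-2}\langle N\rangle_{\tau_2}$. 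Defining $\mathbb{Q}$ via $\de\mathbb{Q}/\de\prob = \psi/\E[\psi]$, Girsanov's theorem identifies $\widetilde W_t \defined W_t-\int_0^t\theta_s\,\de s$ as a $\mathbb{Q}$-Brownian motion and provides the transfer rule $\E[\psi]\,\E^{\mathbb{Q}}[X]=\E^{\prob}[\psi X]$.

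Under $\mathbb{Q}$ the martingale $\M$ has the semimartingale decomposition $\de \M_s = H_s\,\de \widetilde W_s + H_s\theta_s\,\de s$. For \eqref{eqn:GirsanovMart} I would expand $|\M_{t+h}-\M_t|^2 \leq 2|\widetilde \M_{t+h}-\widetilde \M_t|^2 + 2\bigl|\int_t^{t+h} H_s\theta_s\,\de s\bigr|^2$, where $\widetilde \M_s \defined \int_0^s H_r\,\de\widetilde W_r$. The indicator $\1_{\{t\in[\tau_1,\tau_2)\}}$ is $\F_t$-measurable, so the Itô isometry applied cell-by-cell under $\mathbb{Q}$ and summed gives
\[
\E^{\mathbb{Q}}\bigg[\sum_{t\in\unboundedTimeGrid\cap[\tau_1,\tau_2)}\!\!\!\bigl|\widetilde \M_{t+h}-\widetilde \M_t\bigr|^2\bigg] = \E^{\mathbb{Q}}\bigg[\int_{\tau_1}^{\tau_2}\|H_s\|^2\,\de s\bigg].
\]
The drift contribution is handled pathwise: Cauchy-Schwarz in time yields $\bigl|\int_t^{t+h}H\theta\,\de s\bigr|^2 \leq \bigl(\int_t^{t+h}\|H\|^2\bigr)\bigl(\int_t^{t+h}\|\theta\|^2\bigr)$; factoring out the coarse uniform bound $\int_t^{t+h}\|\theta\|^2 \leq \ell^{-2}\langle N\rangle_{\tau_2}$ before summing in $t$ telescopes the remaining integrals to $\int_{\tau_1}^{\tau_2}\|H\|^2\,\de s$. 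The transfer rule then produces \eqref{eqn:GirsanovMart}.

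For \eqref{eqn:GirsanovBM} I would use the analogous decomposition $W_s-W_t=(\widetilde W_s-\widetilde W_t)+\int_t^s\theta_r\,\de r$ together with $(a+b)^q\leq 2^{q-1}(a^q+b^q)$; Cauchy-Schwarz controls the drift supremum over $[t,t+h]$ by $h^{q/2}\ell^{-q}\langle N\rangle_{\tau_2}^{q/2}$, which is independent of $t$, so summation picks up a factor $(\tau_2-\tau_1)/h$ and accounts for the second term in the claimed bound. For the $\widetilde W$-supremum part, a standard Kolmogorov/BDG estimate under $\mathbb{Q}$ gives $\E^{\mathbb{Q}}[\sup_{s\in[t,t+h]}\|\widetilde W_s-\widetilde W_t\|^q\mid\F_t] \leq C_q h^{q/2}$ with $C_q=C_q(d,q)$, whose summation yields $C_q h^{q/2-1}\E^{\mathbb{Q}}[\tau_2-\tau_1]$. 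A final application of the transfer rule completes the proof.

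The main obstacle is technical bookkeeping rather than a conceptual step: one has to verify the $\F_t$-measurability of $\1_{[\tau_1,\tau_2)}(t)$ so that the conditional Itô isometry applies, confirm that the stochastic integrals involved are genuine $\mathbb{Q}$-martingales (the hypothesis $\psi\in L^p$ together with $\E\int_0^\infty\|H_s\|^2\,\de s<\infty$ is precisely what is needed for this), and check that the Girsanov density is well-defined on the whole horizon, which is immediate since $N/\E[\psi]$ is a positive martingale uniformly bounded below by $\ell/\E[\psi]$. Once these technicalities are settled, both inequalities reduce to the two elementary decomposition-plus-Cauchy-Schwarz computations outlined above.
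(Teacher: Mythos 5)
Your proof is correct and follows essentially the same route as the paper: the change of measure $\de\Q = \tfrac{\psi}{\E[\psi]}\,\de\prob$, a Girsanov split of $\M$ (resp.\ of $W$) into a $\Q$-martingale plus a drift controlled through the lower bound $\ell$ and $\big\langle \E_{\circ}[\psi]\big\rangle_{\tau_2}$, It\=o isometry for the martingale increments, and a conditioning/Wald-type count of the grid cells combined with the Gaussian--Doob moment bound for the supremum term, before transferring back to $\prob$. The only cosmetic differences are that you invoke the martingale representation theorem to write the drift explicitly as $\eta/N$ where the paper works with $\langle \M, L\rangle$ and the Kunita--Watanabe inequality, and that you replace the paper's discrete-time Wald lemma by the equivalent cell-by-cell conditioning argument using $\1_{\{\tau_1\leq t<\tau_2\}}\in\F_t$.
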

\begin{proof}
	Consider the density process $ L_t \defined \tfrac{1}{\E^\prob [\psi]} \E_t^\prob [ \psi ] $, $t\geq 0 $.
	Since by assumption $\psi$ is bounded away from zero, there is $\ell>0$ such that $L \geq \ell$. Using elementary properties\footnote{ 
		We use the extend Schwarz's inequality – also known as Kunita-Watanabe inequality, see e.g. \cite[Theorem 2.2.13]{kunita1990stochasticFlows}.
	}
	we compute
	\begin{equation}\label{eqnProofEMMBound}
		\Big| \int_{s}^{t} \frac{1}{L_r} \de\langle\M ,L\rangle_r \Big|^2 
		\leq \Big(\int_{s}^{t}\frac{1}{L_r^2}\de\langle L\rangle_r\Big)\Big(\int_{s}^{t} \|H_r\|^2\de r\Big)
		\leq \ell^{-2} \big\langle\E_{\circ}^\prob [ \psi ] \big\rangle_{t} \int_{s}^{t} \|H_r\|^2\de r
	\end{equation}
	\pagebreak[0]
	\nopagebreak[4]
	for $t\geq s\geq0$. 
	Set $\de\Q \defined L_{\infty} \de \prob$. By Girsanov's theorem 
	$ \M^* \defined \M - \int_0^\cdot \tfrac{1}{L_s} \de \langle\M ,L\rangle_s $
	is a $\Q$- martingale. Use the Kunita-Watanabe identity to express $\M^*$ via the $\Q$-Brownian motion  $W^*$;
	\begin{equation}
		\M^*_t-\M^*_s = \int_{s}^{t} H_r \de W_r - \int_{s}^{t} H_r \tfrac{1}{L_r} \de \langle W ,L\rangle_r = \int_{s}^{t} H_r \de W_r^* \quad\text{for all } t \geq s \geq 0
	\end{equation}
	Using the orthogonality of martingale increments and It\=o's isometry we obtain
	\begin{align}
		\E^\Q \Big[ \sum_{ t\in\unboundedTimeGrid\cap[\tau_1,\tau_2) } |\M^*_{t+h} - \M^*_{t}|^2 \Big] & = \E^\Q \big[ |\M^*_{\tau_2} - \M^*_{\tau_1}|^2 \big] = \E^\Q \Big[ \int^{\tau_2}_{\tau_1} \|H_s\|^2 \de s \Big].
	\end{align}
	Combine this with \eqref{eqnProofEMMBound} (using the monotonicity of the quadratic variation) to estimate
	\begin{equation}
		\E^\Q \Big[\sum_{ t \in\unboundedTimeGrid\cap[\tau_1,\tau_2) } |\M_{t+h}-\M_{t}|^2 \Big] \leq 2\E^\Q \Big[ \big( 1 +\ell^{-2}  \big\langle\E_{\circ}^\prob [ \psi ] \big\rangle_{\tau_2} \big) \int_{\tau_1}^{\tau_2} \|H_r\|^2 \de r \Big]
	\end{equation}
	which under $\prob$ translates to \eqref{eqn:GirsanovMart}, i.e.
	\begin{equation}
		\E^\prob \Big[ \psi \sum_{ t\in\unboundedTimeGrid\cap[\tau_1,\tau_2) } |\M_{t+h} - \M_{t}|^2 \Big] \leq 2 \E^\prob \Big[  \psi \big( 1 +\ell^{-2}  \big\langle\E_{\circ}^\prob [ \psi ] \big\rangle_{\tau_2} \big) \int_{\tau_1}^{\tau_2} \|H_r\|^2 \de r \Big] .
	\end{equation}	
	We remain with establishing the second part: a bound of the random sum of maximal Brownian increments, see \eqref{eqn:GirsanovBM}. Fix $q\geq 2$ and consider 
	$ A_t \defined \sup_{s\in[t, t+h]} \| W_{s} - W_{t} \|^q $ for $t\in\unboundedTimeGrid$. Correspondingly we write $A^*$ for the maximal $W^*$ increments. Observe \eqref{eqnProofEMMBound} for $H=1$ implies
	\begin{equation}
		\sup_{t\in[r,u]}\Big| \int_{r}^{t} \tfrac{1}{L_s} \de\langle W,L \rangle_s \Big|^2 \leq \ell^{-2} \big\langle \E_{\circ}^\prob [ \psi ] \big\rangle_{u}\ (u-r)
	\end{equation}
	Thus with an elementary bound we obtain
	$ A_t \leq 2^{q-1} A_t^* + 2^{q-1} \ell^{-q} \langle \E_{\circ}^\prob [ \psi ] \rangle_{t+h}^{\frac{q}{2}} \, h^{\frac{q}{2}} $.
	Combining those observations yields 
	\begin{equation}
		\tfrac{1}{\E^\prob[ \psi ]} \E^\prob \Big[\psi \sum_{t\in\unboundedTimeGrid\cap[\tau_1,\tau_2)} A_t \Big]
		\leq 2^{q-1} \E^\Q \Big[ \sum_{t\in\unboundedTimeGrid\cap[\tau_1,\tau_2)} A_t^* \Big] + 2^{q-1}\ell^{-q} \E^\Q \Big[ {\big\langle \E_{\circ}^\prob [ \psi ] \big\rangle}_{\tau_2}^{\frac{q}{2}} (\tau_2 - \tau_1) \Big] h^{\frac{q}{2} -1}.
	\end{equation}
	Since (under $\Q$) $A^*$ is an independent and identically distributed sequence we use a version Wald's identity for the discrete-time gird $\unboundedTimeGrid$, see Lemma~\ref{lemma:WaldBound} and obtain 
	\begin{equation}
		\E^\Q \Big[ \sum_{t\in\unboundedTimeGrid\cap[\tau_1,\tau_2)} A_t^* \Big] = \frac{1}{h} \big(\E^\Q [\tau_2] - \E^\Q [\tau_1] \big) \E^\Q [A_0^*] = \frac{1}{h} \E^\Q [ \tau_2 - \tau_1 ] \E^\Q [A_0^*] 
	\end{equation}    
	By definition of $A^*$, Doob's $L^q$-inequality and a well-known bound of the $q$-norm of a $d$-dimensional Gaussian random variable (in terms of the $2$-norm), see e.g. \cite[Corollary 3.2]{ledoux2013probability}, there is $C_q>0$ such that
	$ \E^\Q [A_0^*] = \E^\Q [ \sup_{s\in[0, h]} \| W_{s}^* \|^q ] \leq C_q h^\frac{q}{2} $.
	Combine the bounds and switch to $\prob$ to arrive at the desired bound \eqref{eqn:GirsanovBM}.
\end{proof}
\begin{lemma}[Discrete-Time-Grid Wald]\label{lemma:WaldBound}
	Let $(A_t)_{t\in\unboundedTimeGrid}$ be independent non-negative random variables such that $A_t$ is $\F_{t+h}$-measurable and independent of $\F_t$. Moreover let $\genericStopp$ denote a $\unboundedTimeGrid$-valued 
	stopping time. If $\ \E[\genericStopp]<\infty$ then
	$ \E [ \sum_{t\in\unboundedTimeGrid\cap[0,\genericStopp]} A_t ] \leq \tfrac{1}{h} \E [\genericStopp] \sup_{t\in\unboundedTimeGrid} \E[A_t] $. 
	Particularly whenever the sequence $(A_t)_{t\in\unboundedTimeGrid}$ is identically distributed the bound is an identity.
	\close
\end{lemma}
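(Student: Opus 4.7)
The plan is to represent the random sum as a deterministic series of indicator expectations and reduce the problem to independence and a discrete layer-cake identity. I would first write
\[
\sum_{t \in \unboundedTimeGrid \cap [0, \genericStopp]} A_t = \sum_{\ell = 0}^{\infty} \1_{\{\ell h \leq \genericStopp\}}\, A_{\ell h},
\]
exchange sum and expectation via Tonelli (the summands are non-negative) and reduce the problem to evaluating each $\E[\1_{\{\ell h \leq \genericStopp\}} A_{\ell h}]$ in isolation.

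The core step exploits that $\genericStopp$ is $\unboundedTimeGrid$-valued. For each $\ell \geq 1$, the event $\{\ell h \leq \genericStopp\} = \{\genericStopp \leq (\ell - 1) h\}^c$ lies in $\F_{(\ell - 1)h}$, a sub-$\sigma$-algebra of $\F_{\ell h}$; the $\ell=0$ contribution is deterministic. Since by hypothesis $A_{\ell h}$ is independent of $\F_{\ell h}$, the indicator $\1_{\{\ell h \leq \genericStopp\}}$ and $A_{\ell h}$ are independent, so
\[
\E\big[\1_{\{\ell h \leq \genericStopp\}}\, A_{\ell h}\big] = \prob(\genericStopp \geq \ell h)\, \E[A_{\ell h}] \;\leq\; \prob(\genericStopp \geq \ell h) \sup_{t \in \unboundedTimeGrid} \E[A_t],
\]
the final step being a sharp equality whenever the $A_t$ are identically distributed.

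It remains to sum the indicator probabilities. Since $\genericStopp$ is a non-negative $\unboundedTimeGrid$-valued random variable,
\[
\genericStopp = h \sum_{\ell = 1}^{\infty} \1_{\{\genericStopp \geq \ell h\}} \quad \text{almost surely,}
\]
so Tonelli yields $\sum_{\ell \geq 1} \prob(\genericStopp \geq \ell h) = h^{-1} \E[\genericStopp]$, the finiteness of which is exactly the hypothesis $\E[\genericStopp] < \infty$ (also ensuring convergence of the preceding series). Assembling the pieces delivers the claimed bound, and tracking equalities throughout gives the asserted identity in the identically distributed case.

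The argument is essentially Wald's identity transplanted to the grid $\unboundedTimeGrid$, and no substantial obstacle arises. The only point demanding care is correctly locating $\{\genericStopp \geq \ell h\}$ inside $\F_{(\ell - 1)h}$ rather than merely in $\F_{\ell h}$ — this strict one-step predictability is exactly what the grid-valuedness of $\genericStopp$ buys us, and it is precisely what secures the independence of the indicator from $A_{\ell h}$ needed to factorise each expectation.
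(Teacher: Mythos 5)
Your approach is essentially the paper's: decompose the random sum into indicator-weighted terms, factorise each expectation because the indicator is measurable with respect to a $\sigma$-algebra of which $A_{\ell h}$ is independent, and convert the resulting tail sum into $h^{-1}\E[\genericStopp]$ using that $\genericStopp$ is $\unboundedTimeGrid$-valued. Two remarks, one of substance. As you literally set it up, assembling the pieces does \emph{not} deliver the claimed bound: with the closed sum over $\unboundedTimeGrid\cap[0,\genericStopp]$ you retain the $\ell=0$ term $\E\big[\1_{\{0\le\genericStopp\}}A_0\big]=\E[A_0]$, which is not absorbed by $\tfrac{1}{h}\E[\genericStopp]\sup_{t}\E[A_t]$ (take $\genericStopp\equiv 0$ to see the closed-interval inequality fail, and in the i.i.d.\ case the closed sum equals $(1+\tfrac{1}{h}\E[\genericStopp])\E[A_0]$, not the stated identity). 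This traces back to a discrepancy in the statement itself: the paper's proof, and the only application in Lemma~\ref{lemma:Girsanov}, work with the half-open sum over $\unboundedTimeGrid\cap[0,\genericStopp)$, i.e.\ indicators $\1_{\{t<\genericStopp\}}$ with $\{\genericStopp>t\}\in\F_t$. Under that reading your argument is exactly the paper's: $\sum_{\ell\ge 0}\prob[\genericStopp>\ell h]\,\E[A_{\ell h}]\le \tfrac{1}{h}\E[\genericStopp]\sup_t\E[A_t]$, with equality in the identically distributed case. You should either prove the half-open version or note the extra $\E[A_0]$ explicitly.

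A minor point: insisting that $\{\ell h\le\genericStopp\}$ lies in $\F_{(\ell-1)h}$ is an over-refinement, and attributing the independence to grid-valuedness is slightly misplaced. For any stopping time, $\{\genericStopp\ge \ell h\}$ (and $\{\genericStopp>\ell h\}$) already belongs to $\F_{\ell h}$, which is all the factorisation with $A_{\ell h}\perp\F_{\ell h}$ requires. Grid-valuedness is instead what the layer-cake identity $h\sum_{\ell\ge 0}\1_{\{\genericStopp>\ell h\}}=\genericStopp$ needs — and you do invoke it correctly at that step.
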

\begin{proof}
	For $t\in\unboundedTimeGrid$ by definition $\{\genericStopp < t \} \in \F_t$ and this event is independent of $A_t$. Thus
	\begin{equation}
		\E \Big[ \sum_{t\in\unboundedTimeGrid\cap[0,\genericStopp)} A_t \Big] = \sum_{t\in\unboundedTimeGrid} \E [A_t] \prob [\genericStopp > t] 
		\leq \sup_{s\in\unboundedTimeGrid} \E [A_s] \sum_{t\in\unboundedTimeGrid} \prob [\genericStopp > t] = \tfrac{1}{h} \E[\genericStopp] \sup_{s\in\unboundedTimeGrid} \E [A_s]
	\end{equation}
	where in the last step we used that $\genericStopp$ is $\unboundedTimeGrid$-valued. Particularly whenever $(A_t)_{t\in\unboundedTimeGrid}$ are identically distributed we have $\sup_{t\in\unboundedTimeGrid} \E [A_t] = \E [A_0]$ and hence the preceding bound is an identity.
\end{proof}
\subsection*{Moment Estimates involving Quadratic Variation of Closed Martingales}
\begin{lemma}\label{lemma:IntegrabilityQV_ClosedMartingale}
	Let $\genericStopp$ be a stopping time and $V\in L^p$ for some $p>1$. Then
	$ \langle \E_\circ [V] \rangle_{\genericStopp}^{\frac{p}{2}} \in L^1 $. \close
\end{lemma}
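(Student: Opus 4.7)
The plan is to reduce the claim to an application of Burkholder--Davis--Gundy combined with Doob's maximal inequality on the closed martingale $M_t \defined \E_t[V]$. Since $V\in L^p$ with $p>1$, Jensen's inequality gives $\E[|M_t|^p] \leq \E[|V|^p]$ for every $t\geq 0$, so $M$ is a uniformly integrable martingale bounded in $L^p$, and $\langle M\rangle$ is precisely the quadratic variation appearing in the statement (here $\E_\circ[V]$ denotes the conditional expectation process).

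First, I would apply BDG for the exponent $p$: there is $c_p>0$, independent of the stopping time $\genericStopp$, such that
\begin{equation*}
    \E\bigl[\langle M\rangle_{\genericStopp}^{p/2}\bigr] \leq c_p\, \E\Bigl[\sup_{0\leq t\leq \genericStopp} |M_t|^p\Bigr] \leq c_p\, \E\Bigl[\sup_{t\geq 0} |M_t|^p\Bigr].
\end{equation*}
Next, since $p>1$, Doob's $L^p$-maximal inequality yields
\begin{equation*}
    \E\Bigl[\sup_{t\geq 0} |M_t|^p\Bigr] \leq \Bigl(\tfrac{p}{p-1}\Bigr)^{\!p} \sup_{t\geq 0} \E[|M_t|^p] \leq \Bigl(\tfrac{p}{p-1}\Bigr)^{\!p} \E[|V|^p],
\end{equation*}
where in the final step I used the conditional Jensen bound $|M_t|^p = |\E_t[V]|^p \leq \E_t[|V|^p]$ together with the tower property. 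Chaining the two inequalities produces a finite bound on $\E[\langle M\rangle_{\genericStopp}^{p/2}]$ depending only on $p$ and $\|V\|_{L^p}$.

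The argument is essentially mechanical once one identifies $\E_\circ[V]$ with a closed $L^p$-bounded martingale. The only point deserving care is the BDG step: one needs BDG in the form that applies to continuous (or more generally càdlàg) martingales for exponents $p>1$, which is standard and available with universal constants $c_p$ independent of the filtration and of $\genericStopp$. No separate integrability on $\genericStopp$ is required, because the stopped supremum is dominated by the unstopped one. There is therefore no substantive obstacle; the lemma is a direct consequence of BDG and Doob, and the stated integrability $V\in L^p$ with $p>1$ is exactly what is needed for Doob's maximal inequality to apply.
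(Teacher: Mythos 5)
Your proposal is correct and follows essentially the same route as the paper: identify $\E_\circ[V]$ as a closed martingale and chain the Burkholder--Davis--Gundy, Doob $L^p$-maximal and conditional Jensen inequalities to bound $\E[\langle \E_\circ[V]\rangle_{\genericStopp}^{p/2}]$ by a constant times $\E[|V|^p]$. The only cosmetic difference is that the paper works with the stopped martingale $\E_{t\wedge\genericStopp}[V]$ throughout, whereas you dominate the stopped supremum by the unstopped one before applying Doob; both are equivalent (and in the paper's Brownian filtration all martingales are continuous, so the BDG caveat you mention is harmless).
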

\begin{proof}
	We consider the closed martingale $\E_t [V], t\geq 0$. Using the Burkholder-Davis-Gundy, Doob $L^p$ and Jensen inequalities we compute 
	\begin{align}
		\E \Big[ \big\langle \E_\circ [V] \big\rangle_{\genericStopp}^{\frac{p}{2}} \Big] & \leq C_p \E \Big[ \Big(\sup_{t\in [0,\infty)} \E_{t\wedge\genericStopp} [V] \Big)^p \Big] \\
		& \leq C_p \big(\tfrac{p}{p-1}\big)^p \sup_{t\in[0,\infty)} \E \Big[ \big| \E_{t\wedge\genericStopp } [V] \big|^p \Big] 
		\leq C_p \big(\tfrac{p}{p-1}\big)^p\ \E \big[ |V|^p \big] < \infty. \qedhere
	\end{align}
\end{proof}
\begin{lemma}\label{lemma:IntegrabilityQV_PsiGenericStopp}
	Let $p\geq 1$ and $\genericStopp$ be a stopping time. Assume there is $m>0$ such that $\exp(m\genericStopp) \in L^{1}$. Then for any non-negative $V\in L^{4p}$ it holds that
	$ \E [ V ( 1 + \langle \E_\circ [V] \rangle_{\genericStopp}^{p} ) \genericStopp ] < \infty $. \close
\end{lemma}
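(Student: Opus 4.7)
The plan is to split the expectation into
\[
\E\bigl[V\bigl(1 + \langle\E_\circ[V]\rangle_\genericStopp^p\bigr)\genericStopp\bigr] = \E[V\genericStopp] + \E\bigl[V\langle\E_\circ[V]\rangle_\genericStopp^p\genericStopp\bigr]
\]
and to bound each summand separately, relying on Lemma~\ref{lemma:IntegrabilityQV_ClosedMartingale} to handle the quadratic-variation factor and on the exponential moment assumption $\exp(m\genericStopp)\in L^1$ to provide polynomial moments of $\genericStopp$ of every order (via the elementary bound $s^k \leq C_k \exp(m s)$ for each $k \geq 1$).

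For the first summand one Cauchy--Schwarz suffices: $\E[V\genericStopp] \leq \E[V^2]^{1/2}\E[\genericStopp^2]^{1/2}$, which is finite because $V \in L^{4p} \subseteq L^2$ (as $p \geq 1$) and $\genericStopp \in L^2$. For the second summand I would isolate the quadratic-variation factor via Cauchy--Schwarz,
\[
\E\bigl[V\langle\E_\circ[V]\rangle_\genericStopp^p\genericStopp\bigr] \leq \E[V^2\genericStopp^2]^{1/2}\,\E\bigl[\langle\E_\circ[V]\rangle_\genericStopp^{2p}\bigr]^{1/2},
\]
then apply Cauchy--Schwarz once more to obtain $\E[V^2\genericStopp^2] \leq \E[V^4]^{1/2}\E[\genericStopp^4]^{1/2}$, finite by the same reasoning ($V \in L^{4p}\subseteq L^4$, $\genericStopp \in L^4$). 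The remaining factor is controlled by Lemma~\ref{lemma:IntegrabilityQV_ClosedMartingale} invoked with exponent $q = 4p$, which yields $\langle\E_\circ[V]\rangle_\genericStopp^{2p} \in L^1$; this is precisely the step that consumes the hypothesis $V \in L^{4p}$.

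The only point worth highlighting is the bookkeeping of exponents rather than any genuine difficulty: Lemma~\ref{lemma:IntegrabilityQV_ClosedMartingale} converts $V \in L^q$ into an $L^1$-bound on $\langle\E_\circ[V]\rangle_\genericStopp^{q/2}$, costing one factor of two in the exponent, and the outer Cauchy--Schwarz splitting of $V \langle\E_\circ[V]\rangle_\genericStopp^p \genericStopp$ costs another factor of two. This explains why $V \in L^{4p}$ is the natural integrability threshold for the present strategy, and why the exponential moment of $\genericStopp$ is used only qualitatively — any polynomial moment would suffice. I do not anticipate any deeper obstacles in executing the argument.
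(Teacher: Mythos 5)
Your proof is correct and essentially the same as the paper's: both isolate the quadratic-variation factor via Cauchy--Schwarz, invoke Lemma~\ref{lemma:IntegrabilityQV_ClosedMartingale} with exponent $4p$ to obtain $\langle \E_\circ [V] \rangle_{\genericStopp}^{2p}\in L^1$, and use the exponential moment of $\genericStopp$ only to provide polynomial moments. The sole (immaterial) difference is that the paper bounds $\E[V^2\genericStopp^2]$ by H\"older with exponents $\bigl(2p,\tfrac{2p}{2p-1}\bigr)$, whereas you use a second Cauchy--Schwarz together with $L^{4p}\subseteq L^{4}$, which is equally valid since $p\geq 1$.
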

\begin{proof}
	With the Cauchy-Schwarz inequality we have
	$ \E [ V \langle \E_\circ [V] \rangle_{\genericStopp}^{p} \genericStopp ]^2 
	\leq \E [ \langle \E_\circ [V] \rangle_{\genericStopp}^{2p} ] 
	\E [ V^2 \genericStopp^2 ] $
	and since $V\in L^{4p}$ from Lemma~\ref{lemma:IntegrabilityQV_ClosedMartingale} we know that
	$ \langle \E_\circ [V] \rangle_{\genericStopp}^{2p} \in L^1 $.
	Moreover with Hölder's inequality applied to $q \defined 2p$ and $\Tilde{q} \defined \tfrac{2p}{2p-1}$ we compute
	\begin{align}\label{eqn:ProofIntegrabilityQV_EQ2}
		\E \big[ V^2 \genericStopp^2 \big] \leq \E \big[ V^{4p} \big]^\frac{1}{q} \E \big[ \genericStopp^{2\Tilde{q}} \big]^\frac{1}{\Tilde{q}} < \infty
	\end{align}
	where we use that $\genericStopp$ has a positive exponential moment. 
\end{proof}
\begin{lemma}\label{lemma:IntegrabilityQV_GirsanovGG}
	Let $\genericStopp$ be a finite stopping time and $V\in L^1$ non-negative.    
	Consider the radome variable $ U \defined V \big(1 + \big\langle \E_\circ [V \big] \big\rangle_{\genericStopp} \big) $. If $V\in L^{32} $ then
	$ \E [ U^2 \langle \E_\circ [U ] \rangle_{\genericStopp}^2 ] < \infty $.
	If additionally there is $m>0$ such that $\exp(m\genericStopp)\in L^1$ then 
	$ \E [ U \langle \E_\circ [U ] \rangle_{\genericStopp}^2 \genericStopp ] < \infty $. \close
\end{lemma}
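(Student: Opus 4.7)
The plan is to repeatedly apply Cauchy--Schwarz together with Lemma~\ref{lemma:IntegrabilityQV_ClosedMartingale}, trading integrability of a martingale's quadratic variation at a stopping time for a correspondingly higher $L^p$-norm of the terminal variable.

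For the first assertion I would begin with Cauchy--Schwarz,
\begin{equation*}
    \E \big[ U^2 \langle \E_\circ [U] \rangle_{\genericStopp}^2 \big] \leq \E [U^4]^\frac{1}{2}\ \E \big[ \langle \E_\circ [U] \rangle_{\genericStopp}^4 \big]^\frac{1}{2}.
\end{equation*}
By Lemma~\ref{lemma:IntegrabilityQV_ClosedMartingale} applied with exponent $p=8$, the second factor is finite provided $U \in L^8$. Using the elementary bound $U^8 \leq 2^7 V^8 + 2^7 V^8 \langle \E_\circ [V] \rangle_{\genericStopp}^8$ followed by one more application of Cauchy--Schwarz,
\begin{equation*}
    \E [U^8] \leq 2^7 \E [V^8] + 2^7 \E [V^{16}]^\frac{1}{2}\ \E \big[ \langle \E_\circ [V] \rangle_{\genericStopp}^{16} \big]^\frac{1}{2},
\end{equation*}
the remaining quadratic-variation expectation is finite by Lemma~\ref{lemma:IntegrabilityQV_ClosedMartingale} with $p=32$ -- which is exactly why the hypothesis $V \in L^{32}$ is imposed. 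The factor $\E [U^4]$ is handled by the same scheme and only requires $V \in L^{16}$, hence is subsumed.

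For the second assertion I would apply Cauchy--Schwarz twice in order to obtain
\begin{equation*}
    \E \big[ U \langle \E_\circ [U] \rangle_{\genericStopp}^2 \genericStopp \big] \leq \E \big[ U^2 \langle \E_\circ [U] \rangle_{\genericStopp}^2 \big]^\frac{1}{2}\ \E \big[ \langle \E_\circ [U] \rangle_{\genericStopp}^4 \big]^\frac{1}{4}\ \E [\genericStopp^4]^\frac{1}{4}.
\end{equation*}
The first factor is finite by the first assertion, the middle factor is again controlled via Lemma~\ref{lemma:IntegrabilityQV_ClosedMartingale} applied to $U \in L^8$, and the last factor is finite since the assumption $\exp(m\genericStopp) \in L^1$ yields $\genericStopp \in L^p$ for every $p \geq 1$. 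The only mildly delicate point is the bookkeeping of integrability orders for $U$: each time Cauchy--Schwarz together with Lemma~\ref{lemma:IntegrabilityQV_ClosedMartingale} is invoked to separate $\langle \E_\circ [V] \rangle_{\genericStopp}^p$ from $V^p$, the required $L^q$-norm of $V$ doubles, which is what produces the concrete exponent $32$ and constitutes the only real computational obstacle.
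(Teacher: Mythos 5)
Your proof is correct and follows essentially the same route as the paper: Cauchy--Schwarz combined with Lemma~\ref{lemma:IntegrabilityQV_ClosedMartingale} to convert quadratic-variation integrability at $\genericStopp$ into $L^p$-moments of $V$, with the exponent $32$ arising exactly as you describe (via $\langle \E_\circ [V] \rangle_{\genericStopp}^{16}$ and hence $V\in L^{32}$, and $U\in L^8$ for $\langle \E_\circ [U] \rangle_{\genericStopp}^4$). The only cosmetic difference is in the second assertion, where you peel off $\E[\genericStopp^4]^{1/4}$ by a second Cauchy--Schwarz, whereas the paper bounds $\E[U^2\genericStopp^2]$ directly via H\"older and the exponential moment of $\genericStopp$; the ingredients are identical.
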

\begin{proof}
	Since $V\in L^{32}$ with the Cauchy-Schwarz inequality and Lemma~\ref{lemma:IntegrabilityQV_ClosedMartingale} we can estimate
	$ \E [ U^8 ]^2 \leq \E [ V^{16} ] 
	\E [ (1 + \langle \E_\circ [V ] \rangle_{\genericStopp} )^{16} ] < \infty $.
	By Lemma~\ref{lemma:IntegrabilityQV_ClosedMartingale} we further have
	$ \langle \E_\circ [U] \rangle_{\genericStopp}^4 \in L^1 $.
	With the Cauchy-Schwarz inequality we conclude
	$ \E [ U^2 \langle \E_\circ [U ] \rangle_{\genericStopp}^2 ]^2 \leq \E [ U^4 ] \E [ \langle \E_\circ [U ] \rangle_{\genericStopp}^4 ] < \infty $.
	%
	The second part of the statement is similar to Lemma~\ref{lemma:IntegrabilityQV_PsiGenericStopp}. We use the Cauchy-Schwarz inequality to estimate
	$ \E [ U \langle \E_\circ [U ] \rangle_{\genericStopp}^2 \genericStopp ]^2 \leq \E [U^2\genericStopp^2] \E [ \langle \E_\circ [U ] \rangle_{\genericStopp}^4 ] $
	where the first factor is finite since $U\in L^4$ and $\genericStopp$ has a positive exponential moment (in \eqref{eqn:ProofIntegrabilityQV_EQ2} choose $q=2$). The second factor is finite since $ \langle \E_\circ [U] \rangle_{\genericStopp}^4 \in L^1 $.
\end{proof}
\end{document}